\newcommand{\TheTitle}{On the convergence to local limit of nonlocal models with approximated interaction neighborhoods}
\newcommand{\TheAuthors}{Qiang Du, Hehu Xie and Xiaobo Yin}
\title{{\TheTitle}}
\author{
  Qiang Du\thanks{Department of Applied Physics and Applied Mathematics, Columbia University, NY 10027, USA
    (\email{qd2125@columbia.edu}).
  The research of this author is supported in part by National Science Foundation grants DMS-2012562 and DMS-1937254.
   }
  \and
  Hehu Xie\thanks{LSEC, ICMSEC, Academy of Mathematics and Systems Science, Chinese Academy of Sciences, Beijing 100190, China, and  School of Mathematical Sciences, University of Chinese Academy of Sciences, Beijing, 100049, China (\email{hhxie@lsec.cc.ac.cn}). The research of this author is supported by the  Beijing Natural Science Foundation Z200003, National Natural Science Foundations of China 11771434 and the National Center for Mathematics and Interdisciplinary Science, CAS.}
  \and
  Xiaobo Yin\thanks{School of Mathematics and Statistics, and Hubei Key Laboratory of Mathematical Sciences, Central China Normal University, Wuhan 430079, China (\email{yinxb@mail.ccnu.edu.cn}). The research of this author is supported by the National Natural Science Foundation of China 11671165 and Hubei Provincial Science and Technology Innovation Base (Platform) Special Project 2020DFH002.}
}
\tikzset{
  flash/.style args={#1:#2}{postaction=decorate,decoration={name=markings,
    mark=at position #1 with {
    \draw[fill=#2, line width=.75\pgflinewidth, line cap=round, line join=round]
         (+\pgflinewidth,+7\pgflinewidth)   -- ++ ( left:+2\pgflinewidth)
      -- (+-4\pgflinewidth,+-\pgflinewidth) -- ++ (right:+5\pgflinewidth)
      -- (+-\pgflinewidth,+-7\pgflinewidth) -- ++ (right:+2\pgflinewidth)
      -- (+4\pgflinewidth,\pgflinewidth)    -- ++ (left:+5\pgflinewidth)
      -- cycle;}}}
      }
\newcommand{\Omdh}{{\widehat{\Omega}_{\delta}}}
\newcommand{\Omdc}{{\Omega_{\delta}^{c}}}
\begin{document}
\maketitle
\begin{abstract}
Many nonlocal models have adopted Euclidean balls as the nonlocal interaction neighborhoods. When solving them numerically, it is sometimes convenient to adopt polygonal approximations of such balls. A crucial question is, to what extent such approximations affect the nonlocal operators and the corresponding solutions. While recent works have analyzed this issue for a fixed horizon parameter, the question remains open in the case of a small or vanishing horizon parameter, which happens often in many practical applications and has significant impact on the reliability and robustness of nonlocal modeling and simulations.
In this work, we are interested in addressing this issue and establishing the convergence of the nonlocal solutions associated with polygonally approximated interaction neighborhoods to the local limit of the original nonlocal solutions. Our finding reveals that the new nonlocal solution does not converge to the correct local limit when the number of sides of polygons is uniformly bounded. On the other hand, if the number of sides tends to infinity, the desired convergence can be established. These results may be used to guide future computational studies of nonlocal models.
\end{abstract}

\begin{keywords}
Nonlocal model, peridynamics, polygonal approximation, horizon parameter, asymptotically compatible, convergence
\end{keywords}

\begin{AMS}
45A05, 45N05, 45P05, 46N20, 65R20, 65R99
\end{AMS}

\section{Introduction}
Nonlocal models that account for interaction occurring at a distance have been shown to provide improved simulation fidelity in the presence of long-range forces and anomalous behaviors \cite{du2019nonlocal}. For this reason, they have found interesting applications in a variety of research fields such as fracture mechanics \cite{ha2011characteristics,littlewood2010simulation,silling2000reformulation}, phase transitions \cite{bates1999integrodifferential,delgoshaie2015non,fife2003some}, stochastic processes \cite{burch2014exit,d2017nonlocal,meerschaert2019stochastic,metzler2000random} and image processing \cite{buades2010image,gilboa2007nonlocal,gilboa2009nonlocal,lou2010image}.

There has been much recent interest in developing numerical algorithms for nonlocal
models \cite{d2020numerical}, including finite difference \cite{du2019asymptotically,tian2017conservative,tian2013analysis}, finite element \cite{du2019conforming,tian2013analysis,zhang2016quadrature}, collocation
\cite{tian2013efficient,wang2017fast,zhang2018accurate} and meshfree  \cite{lehoucq2016radial,leng2021asymptotically,leng2020asymptotically,silling2005meshfree} methods. For mesh dependent numerical schemes, like finite difference and finite element methods, the meshes are often composed of polyhedra in three dimension or polygons in the two dimensional case. On the other hand, the interaction neighborhoods are ubiquitously chosen to be Euclidean balls (the term `ball' is also used for circular disk in $\mathbb{R}^2$ here), leading to the challenge of dealing with intersections of such balls with the polygonal shapes. To mitigate the challenge, polygonal approximation of balls has been presented as a possible option \cite{d2021cookbook,vollmann2019nonlocal}. For mesh-free schemes, one also needs to take care of the numerical treatment of nonlocal interaction neighborhoods properly, often involving the implementation of various volume correction strategies, see \cite{parks2008peridynamics,zheng2021new}.
A natural question is, to what extent those approximations using polygonally approximated interaction neighborhoods affect the nonlocal operators and the corresponding nonlocal solutions. In particular, an important aspect of this question is how such approximations may affect the consistency with well-known local limits of the nonlocal models, should such limits make sense. This is the issue that we focus on in the current work in light of the potential significant impact on the reliability and robustness of nonlocal modeling and simulations.

We can reformulate the issue in terms of the diagram shown in \cref{fig:diagram}, which also summarizes the existing literature and the main contributions of this work. We use $\mathcal{L}_{\delta}$ to represent a nonlocal operator with the horizon parameter $\delta$ measuring the radius of the interaction neighborhood, the Euclidean ball in this paper. Meanwhile, $\mathcal{L}_{\delta,n_{\delta}}$ is used to represent
a nonlocal operator obtained from $\mathcal{L}_{\delta}$ by replacing the Euclidean ball with a suitable polygonal approximation. The integer $n_{\delta}$ denotes the maximum numbers of the polygons' sides, and we use $n_{\delta}=\infty$ to represent the case that the polygonal set with infinite number of sides recovers the Euclidean ball.
$\mathcal{L}_0 $ denotes the local limit of  $\mathcal{L}_{\delta}$ as $\delta\to 0$.  The nonlocal problem is stated as  $-\mathcal{L}_{\delta} u_{\delta} = f_{\delta} $ with $f_{\delta}$ denoting prescribed data and $u_{\delta}$ denoting the nonlocal solution. Similarly, we have the corresponding nonlocal problem $-\mathcal{L}_{\delta,n_{\delta}} u_{\delta,n_{\delta}} = f_{\delta} $ and local problem $-\mathcal{L}_0 u_{0} = f $ with $f$ be the limit of $f_{\delta} $ as $\delta\to 0$ in the $L^{2}$ sense (or in a weaker sense), respectively. The dashed and dotted arrows are the main directions studied in this paper, while the solid ones stand for cases previously investigated in the literature. In particular, the dashed line, marked with $\lightning$ signs, indicates a path that the nonlocal solution $u_{\delta,n_{\delta}}$ fails to converge to the correct local solution $u_0$ if $n_{\delta}$ remains bounded as $\delta\rightarrow 0$. This result can be found later in \cref{subsec:lossAC} where we show that for a generic quadratic function $q=q({\bf x})$, $\mathcal{L}_{\delta,n_{\delta}}q $ does not converge to $\mathcal{L}_{0}q$ if $n_{\delta}$ has a uniform upper bound as $\delta\to 0$.
On the other hand, the dotted line corresponds to a path that $u_{\delta,n_{\delta}}$ converges to $u_0$ if $n_{\delta}$ tends to infinity as $\delta\rightarrow 0$ under the condition that the polygonal approximation is a quasi-uniform family of inscribed polygons of the Euclidean balls, which is one of the three sufficient conditions presented in \cref{sec:condAC} to ensure the convergence of $u_{\delta,n_{\delta}}$ to $u_{0}$. The other two sufficient conditions are related to different ratios: one between the radius of the largest inscribed balls of the polygons and $\delta$, and another between areas of the polygons and the balls.
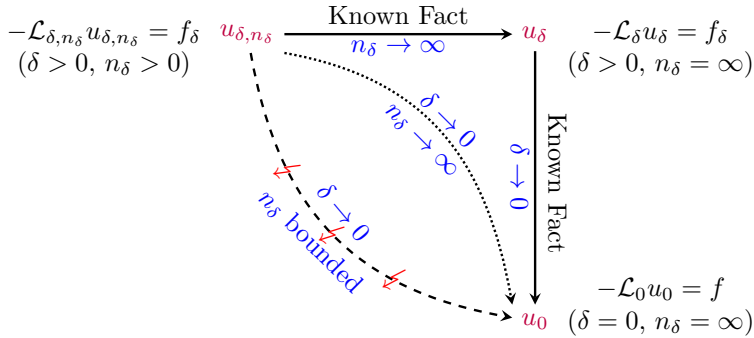
\begin{figure}[htb!]
\centering
\begin{tikzpicture}[scale=0.45]
\tikzset{to/.style={->,>=stealth,line width=.9pt}}

\node (v1) at (0,8.5)   {\textcolor{purple}{$u_{\delta,n_{\delta}}$}};
\node (v2) at (8.5,8.5) {\textcolor{purple}{$u_\delta$}};
\node (v4) at (8.5,0)   {\textcolor{purple}{$u_0$}};

\node[align=center, yshift=-0.2cm,anchor=east] at (v1.west)
  {$-\mathcal{L}_{\delta,n_{\delta}} u_{\delta,n_{\delta}} = f_{\delta} $\\[.01cm] ($\delta>0$, $n_{\delta}>0$)};

\node[align=center, yshift=0.2cm,anchor=west] at (v4.east)
  {$-\mathcal{L}_0 u_{0} = f $\\[.01cm] ($\delta=0$, $n_{\delta}=\infty$)};
\node[align=center,yshift=-0.2cm,anchor=west] at (v2.east)
  {$-\mathcal{L}_{\delta} u_{\delta} = f_{\delta} $\\[.01cm] ($\delta>0$, $n_{\delta}=\infty$)};

\draw[to] (v1.east) --  node[midway,below,yshift=0.05cm] {\textcolor{blue}{$n_{\delta}\to \infty$}}
node[midway,above] {\textcolor{black}{Known Fact}}
(v2.west);

\draw[to] (v2.south) --
node[midway,xshift=0.3cm, yshift=1.cm,rotate=-90, right]  {Known Fact}
node[midway,xshift=-0.2cm, yshift=0.6cm,rotate=-90, right]  {\textcolor{blue}{$\delta\to0$}}
(v4.north);

\draw[densely dotted][to] (v1.south east) to[out = -10, in = 100, looseness = 1.1] node[midway,xshift=-0.3cm,yshift=-0.1cm,rotate=-45]
{\textcolor{blue}{$n_{\delta}\rightarrow\infty$}}  node[midway,xshift=0.08cm,yshift=0.2cm,rotate=-45]    {\textcolor{blue}{$\delta\to0$}}
(v4.north west);

\draw[dashed][to] (v1) to[out = -80, in = -190, looseness = 1.1] node[midway,xshift=-0.3cm,yshift=-0.1cm,rotate=-45] {\textcolor{blue}{$n_{\delta}$ bounded}} node[midway,xshift=0.83cm,yshift=-0.6cm,rotate=-40]    {\textcolor{red}{\large\bf $\lightning$}}  node[midway,xshift=0.cm,yshift=0.0cm,rotate=-50]    {\textcolor{red}{\large\bf$\lightning$}} node[midway,xshift=-0.6cm,yshift=0.85cm,rotate=-60]   {\textcolor{red}{\large\bf $\lightning$}}
node[midway,xshift=0.15cm,yshift=0.3cm,rotate=-50]    {\textcolor{blue}{$\delta\to0$}}
(v4);

\end{tikzpicture}\caption{A convergence diagram representing nonlocal problems with different interaction neighborhoods and their local limits}
\label{fig:diagram}
\end{figure}

The convergence diagram in \cref{fig:diagram} is reminiscent to \cite[Figure 1]{tian2014asymptotically} (see also \cite[Figure 2.1]{tian2020asymptotically}) where the concept of Asymptotically Compatible (AC) schemes is formally proposed to study the convergence of numerical approximations to nonlocal problems and their local limits. One may find a number of  recent studies on various AC schemes for nonlocal models, see for example \cite{du2019asymptotically,du2019conforming,leng2022petrov,leng2021asymptotically,trask2019asymptotically,you2020asymptotically,zhang2018accurate} and additional references cited therein. While the diagram in \cref{fig:diagram} can be regarded as some kind of AC property, its implication is different. In fact, although motivated by computational studies, the diagram in \cref{fig:diagram} does not have to involve any numerical discretization. Instead, one may interpret the diagram completely on the continuum level so that the main focus is to examine the effect of the polygonal approximations of the spherical interaction neighborhood on  the nonlocal operators and solutions. Nevertheless, the findings given by the diagram shed light on the potential  risks involved in computational simulations of nonlocal problems with different polygonal approximations. They provide theoretical guidance to the design and analysis of numerical algorithms when approximations of interaction neighborhoods are adopted.

More precise statements and rigorous derivations of the findings shown in \cref{fig:diagram} are given later in \cref{sec:convdia}. In preparation, we present detailed descriptions on the  nonlocal operators $\mathcal{L}_{\delta}$ and $\mathcal{L}_{\delta,n_{\delta}}$, the local limit $\mathcal{L}_{0}$, as well as
some known results in the related literature in \cref{sec:notation}. In particular, the convergence of $u_{\delta}$ to $u_{0}$ is reviewed. We also provide some popular examples of kernels in \cref{subsec:kernel} to illustrate that  our theory is widely applicable. Verification of the theory, numerical results and discussions are
given in \cref{sec:numer} to validate the theoretical studies. Finally, we give some conclusions in \cref{sec:conclusion}.

\section{Background and notation}\label{sec:notation}
For $u({\bf x}): \mathbb{R}^{2}\rightarrow \mathbb{R}$, the nonlocal operator $\mathcal{L}_{\delta}$ acting on $u({\bf x})$ is defined as
\begin{equation}\label{Nolocal_Operator}
\mathcal{L}_{\delta}u({\bf x})=2\int_{\mathbb{R}^{2}}{\big (}u({\bf y})-u({\bf x}){\big )}\gamma_{\delta}({\bf x},{\bf y})d{\bf y}\quad \forall {\bf x}\in\Omega,
\end{equation}
where the nonnegative symmetric mapping $\gamma_{\delta}({\bf x},{\bf y}): \mathbb{R}^{2}\times\mathbb{R}^{2} \rightarrow \mathbb{R}$ is often called a kernel. In general, the integral in \cref{Nolocal_Operator} is understood in the principal value sense.
In this paper, we consider the following nonlocal homogeneous Dirichlet volume-constrained diffusion problem
\begin{equation}\label{nonlocal_diffusion}
     \left \{
     \begin{array}{rll}
-\mathcal{L}_{\delta}u_{\delta}({\bf x})&=f_{\delta}({\bf x})& \mbox{on}\: \Omega, \\
u_{\delta}({\bf x})&=0 & \mbox{on} \:\Omdc,
     \end{array}
     \right .
\end{equation}
where $\Omega\subset\mathbb{R}^{2}$ is a bounded open domain, the set $\Omdc$ is called the interaction region corresponding to the domain $\Omega$ and the kernel $\gamma_{\delta}$. It consists of those points in the complement domain $\mathbb{R}^{2}\!\setminus\!\Omega$ that interact with points in $\Omega$. $f_{\delta}({\bf x})\in L^2(\Omega)$ is a given function depending on $\delta$. Here we use homogeneous Dirichlet boundary condition to avoid the impact from boundary condition. Readers who are interested in the convergence behavior for nonlocal solutions to their local limits under different kinds of nonlocal boundary conditions are referred to \cite{cortazar2008approximate,du2019uniform,TTD17,yanguniform,you2020asymptotically}.  For convenience we denote by $\Omdh=\Omega\cup\Omdc$.

\subsection{Assumptions on the kernels}
In this work, we are interested in nonlocal models with the interactions occurring over finite distances. The kernels are thus assumed to have bounded supports, i.e., $\gamma_{\delta}({\bf x},{\bf y})\neq 0$ only if ${\bf y}$ is within a bounded neighborhood of ${\bf x}$, which represents the support of the kernel and is called the interaction neighborhood. Here we consider the choice of kernels with their support being Euclidean balls $B_{\delta,2}({\bf x})=\{{\bf y}\in\mathbb{R}^{2}: \|{\bf y}-{\bf x}\|_{2}\leq\delta\}$ (abbreviated as $B_{\delta}({\bf x})$), i.e.,
\begin{equation}\label{kernel_finite}
\forall {\bf x}\in\Omega:\:
\gamma_{\delta}({\bf x},{\bf y})=0,\: \forall {\bf y}\in\mathbb{R}^{2}\!\setminus\! B_{\delta}({\bf x}),
\end{equation}
where $\delta$ is known as the horizon parameter or interaction radius. Under this condition,
the interaction region becomes a layer around $\Omega$: $\Omdc=\{{\bf y}\in \mathbb{R}^{2}\!\setminus\!\Omega:\, \mbox{dist}({\bf y},\partial \Omega)<\delta\}$. Besides \cref{kernel_finite}, the kernel $\gamma_{\delta}({\bf x},{\bf y})$ is assumed to satisfy the following conditions:
\begin{equation}\label{General_kernel}
\left\{
\begin{array}{l}
\gamma_{\delta}({\bf x},{\bf y})=\widetilde{\gamma}_{\delta}(\|{\bf y}-{\bf x}\|_{2})>
 0\:\: \mbox{for}\:\: \|{\bf y}-{\bf x}\|_{2}< \delta, \;\text{ and }\\[.1cm]
 \displaystyle \int_{B_{\delta}({\bf 0})}z_{i}^{2}\widetilde{\gamma}_{\delta}(\|{\bf z}\|_{2})d{\bf z}=1,\:\; \text{ for }\;  i=1,2.
 \end{array}
 \right.
\end{equation}
Note that the first condition in \cref{General_kernel} means that the kernel is strictly positive for ${\bf y}$ inside $B_{\delta}({\bf x})$.  The second condition of \cref{General_kernel} is set to ensure that if the operator $\mathcal{L}_{\delta}$ converges to a limit as $\delta\to 0$, then the limit is given by $\mathcal{L}_{0}=\Delta$, the classical diffusion operator  \cite{tian2014asymptotically}.  Assuming further that $f_{\delta} \to f_0$ in $L^2(\Omega)$ as $\delta\to 0$, or a weaker assumption on the convergence \cite{tian2020asymptotically},  the corresponding local problem in the $\delta\to 0$ limit is given as follows:
\begin{equation}\label{local_diffusion}
     \left \{
     \begin{array}{rll}
-\mathcal{L}_{0}u_{0}({\bf x})&=f_{0}({\bf x})& \mbox{on}\: \Omega, \\
u_{0}({\bf x})&=0 & \mbox{on} \:\partial\Omega.
     \end{array}
     \right .
\end{equation}
This means that as $\delta\rightarrow 0$, the nonlocal effect diminishes and nonlocal equations converge to a classical local differential equation. Such limiting behavior reflects connections and consistencies between nonlocal and local models and has great practical significance especially for multiscale modeling and simulations.

To make the analysis later more concise, we further assume that $\widetilde{\gamma}_{\delta}$ has a re-scaled form, that is,
\begin{equation}\label{rescaled_kernel}
\widetilde{\gamma}_{\delta} ( \|{\bf y}-{\bf x}\|_{2})
=\frac{1}{\delta^{4}}\gamma\left(\frac{\|{\bf y}-{\bf x}\|_{2}}{\delta}\right)
\end{equation}
for some nonnegative function $\gamma$: $\mathbb{R}^{*}\rightarrow \mathbb{R}^{*}$
such that
\begin{equation*}
\int_{B_{1}({\bf 0})}\xi_{i}^{2}\gamma(\|{\boldsymbol \xi}\|_{2})d{\boldsymbol \xi}=1,\: i=1,2.
\end{equation*}
By \cref{kernel_finite} and \cref{General_kernel}, we see that the support set of $\widetilde{\gamma}_{\delta}$ is $B_{\delta}({\bf 0})$, the support set of $\gamma$ is $[0,1)$. Thus, for any sub-domain $D_{s}\subset B_{1}({\bf 0})$ of nonzero measure, we have
\begin{equation}\label{comp_supp}
\int_{D_{s}}\|{\boldsymbol \xi}\|_{2}^{2}\gamma(\|{\boldsymbol \xi}\|_{2})d{\boldsymbol \xi}>0.
\end{equation}

\subsection{Motivation for studying the polygonal approximations of the circular interaction neighborhoods}\label{subsec:Moti}
When solving nonlocal problems like \cref{nonlocal_diffusion} on computers using mesh based numerical methods such as finite element methods, a popular strategy is to use various polygonal approximations of circular interaction neighborhoods (Euclidean balls)  \cite{d2021cookbook,vollmann2019nonlocal}. This helps to eliminate the complications to handle nonlocal interactions at the ball-element intersections. To be specific, in (a) of \cref{Fig:Notconv} the Euclidean ball, centered at a point $O$, is approximated by an $8$-sides polygon. The difference between the ball and the polygon consists of eight circular caps (abbreviated as caps, the red colored domain is one of eight caps). A natural question is, to what extent the precision is affected when contributions from these caps are thrown away or approximated by some triangles. One can find in \cite{d2021cookbook,vollmann2019nonlocal} discussions on how the resulted approximations, in combination with quadrature rules, affect the discretization error, including the estimate on the convergence rate with respect to the mesh size $h$ between the original continuous linear finite element solution (denoted by $u_{\delta}^{h}$) associated with the exact Euclidean balls and that (denoted by $u_{\delta,n_{\delta}}^{h,\sharp}$) associated with polygonal approximations $B_{\delta,n_{\delta}}^{\sharp}$. Using suitably modified notations and descriptions, we review a known convergence result as follows.

\begin{proposition}\cite[Corollary 4.2]{d2021cookbook}
Assume that the kernel $\gamma_{\delta}({\bf x},{\bf y})$ is square integrable (or integrable and translationally invariant) and
bounded for all $B_{\delta}({\bf x})\ominus B_{\delta,n_{\delta}}^{\sharp}({\bf x})$ (which denotes the symmetric difference
between $B_{\delta}({\bf x})$ and $B_{\delta,n_{\delta}}^{\sharp}({\bf x})$),
where $\sharp$ represents one of the following possible choices
\begin{equation}\label{appro_ball}
\big\{\rm regular, nocaps, approxcaps, 3vertices, 123vertices, 23vertices, barycenter\big\}.
\end{equation}
Then, the following estimate holds
\begin{equation}\label{Error_h}
{\Big\|}u_{\delta}^{h}-u_{\delta,n_{\delta}}^{h,\sharp}{\Big\|}_{L^{2}}\leq CK\sup_{{\bf x}\in\Omega}{\Big|}B_{\delta}({\bf x})\ominus B_{\delta,n_{\delta}}^{\sharp}({\bf x}){\Big|}.
\end{equation}
\end{proposition}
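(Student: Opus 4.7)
The plan is a Strang-type perturbation argument applied to the two Galerkin problems. Denote by $a^h_\delta(\cdot,\cdot)$ and $a^{h,\sharp}_{\delta,n_\delta}(\cdot,\cdot)$ the symmetric bilinear forms associated with the two discretizations; they coincide in every respect except that the inner interaction ball $B_\delta(\mathbf{x})$ is replaced by its polygonal surrogate $B^{\sharp}_{\delta,n_\delta}(\mathbf{x})$. Both $u^h_\delta$ and $u^{h,\sharp}_{\delta,n_\delta}$ are computed on the same conforming finite-element space $V^h$ and satisfy $a^h(u^h,v^h)=(f_\delta,v^h)$ for every $v^h\in V^h$. Setting $e^h:=u^h_\delta-u^{h,\sharp}_{\delta,n_\delta}$ and subtracting yields the Galerkin-perturbation identity
\[
a^h_\delta(e^h,v^h)=\bigl(a^{h,\sharp}_{\delta,n_\delta}-a^h_\delta\bigr)\bigl(u^{h,\sharp}_{\delta,n_\delta},v^h\bigr)\qquad\forall\, v^h\in V^h.
\]

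The first step is to bound the right-hand side as a pure consistency error. By the symmetry of both forms, the difference collapses to a double integral over $\{(\mathbf{x},\mathbf{y}):\mathbf{y}\in B_\delta(\mathbf{x})\ominus B^{\sharp}_{\delta,n_\delta}(\mathbf{x})\}$, where the kernel-boundedness assumption supplies a pointwise constant $K$; in the square-integrable but unbounded case one proceeds analogously, using Cauchy--Schwarz against the $L^2$-norm of $\gamma_\delta$ restricted to the symmetric-difference set, and the translational invariance to pass the restriction through. The second step takes $v^h=e^h$ and invokes the nonlocal Poincar\'e inequality available under the hypotheses in \eqref{General_kernel} to obtain coercivity of $a^h_\delta$ in the constrained nonlocal energy norm, together with its continuous embedding into $L^2(\Omdh)$.

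The final step combines the uniform $L^2$-stability of $u^{h,\sharp}_{\delta,n_\delta}$, which is controlled by $\|f_\delta\|_{L^2}$, with the pointwise kernel bound and the trivial majorization $|B_\delta(\mathbf{x})\ominus B^{\sharp}_{\delta,n_\delta}(\mathbf{x})|\le\sup_{\mathbf{x}\in\Omega}|B_\delta(\mathbf{x})\ominus B^{\sharp}_{\delta,n_\delta}(\mathbf{x})|$ to produce the stated bound \eqref{Error_h}. The principal obstacle lies in the consistency step: one must extract the supremum of the symmetric-difference measure linearly, cleanly separated from the factors $u(\mathbf{y})-u(\mathbf{x})$ and $v(\mathbf{y})-v(\mathbf{x})$, without introducing constants that degenerate as $\delta\to 0$. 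A duality argument against the nonlocal adjoint problem is the natural device to trade the energy-norm control of $e^h$ obtained from coercivity for the sharper $L^2$ norm that appears on the left-hand side of \eqref{Error_h}.
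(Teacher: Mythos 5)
Note first that the paper you are reading does \emph{not} supply a proof of this proposition: it is quoted (with adapted notation) from \cite[Corollary~4.2]{d2021cookbook}, and the text immediately following the statement only recalls the definitions of the various $\sharp$-choices and then moves on to the $\delta$-dependence of the constant $K$ in \cref{K_gamma}. There is therefore no in-paper proof against which to compare your argument, and you should be aware that the result is treated here strictly as imported background.

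As to your sketch on its own merits, the Strang-type Galerkin-perturbation framework is the natural device, and your identity $a^h_\delta(e^h,v^h)=(a^{h,\sharp}_{\delta,n_\delta}-a^h_\delta)(u^{h,\sharp}_{\delta,n_\delta},v^h)$ together with coercivity from the nonlocal Poincar\'e inequality is exactly the right scaffolding. Two things need tightening. First, your last step (duality against the nonlocal adjoint) is superfluous: once coercivity and the Poincar\'e inequality give $\|e^h\|_{L^2}\le C\|e^h\|_{\delta}$, you already land in $L^2$; there is no Aubin--Nitsche gain to be had here because the target on the right-hand side of \cref{Error_h} is not of higher order than the energy-norm bound. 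Second, and more substantively, the consistency step you flag as "the principal obstacle" is indeed the crux and your sketch does not resolve it. The difference of the forms is a double integral over $\{({\bf x},{\bf y}): {\bf y}\in B_\delta({\bf x})\ominus B^\sharp_{\delta,n_\delta}({\bf x})\}$ of $(u({\bf y})-u({\bf x}))(v({\bf y})-v({\bf x}))\gamma_\delta({\bf x},{\bf y})$, and a naive Cauchy--Schwarz against $\gamma_\delta$ returns nonlocal energy seminorms restricted to the symmetric-difference set, not the product $K\cdot\sup_{\bf x}|B_\delta({\bf x})\ominus B^\sharp_{\delta,n_\delta}({\bf x})|$. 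To extract the measure factor linearly you need to use the pointwise kernel bound $\gamma_\delta\le K$ on the symmetric difference, split $(u({\bf y})-u({\bf x}))^2\le 2u({\bf y})^2+2u({\bf x})^2$, and then apply Fubini to bound the ${\bf y}$-integral of $u({\bf y})^2$ over $\{{\bf x}:{\bf y}\in B_\delta({\bf x})\ominus B^\sharp_{\delta,n_\delta}({\bf x})\}$ by $\sup_{\bf x}|B_\delta({\bf x})\ominus B^\sharp_{\delta,n_\delta}({\bf x})|\cdot\|u\|^2_{L^2}$; this requires a symmetry (or translation-invariance) property of the symmetric-difference sets that you only gesture at. Without making that Fubini step explicit the argument does not actually close.
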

Here $B_{\delta,n_{\delta}}^{\rm regular}({\bf x})$ is an $n_{\delta}$-sided inscribed regular polygon of $B_{\delta}({\bf x})$, while $B_{\delta,n_{\delta}}^{\rm nocaps}({\bf x})$ stands for the polygon generated by throwing away all caps formed whenever the circular boundary of the ball $B_{\delta}({\bf x})$ intersects the sides of a triangle. The caps are approximated by sub-triangles which, together with $B_{\delta,n_{\delta}}^{\rm nocaps}({\bf x})$, results to $B_{\delta,n_{\delta}}^{\rm approxcaps}({\bf x})$. $B_{\delta,n_{\delta}}^{\sharp}\!({\bf x})$, with
$\sharp$ being one of $\{\rm 3vertices, 123vertices, 23vertices, barycenter\}$, stands for the cases that `three vertices', `at least one vertex', `at least two vertices', or `barycenter' of the triangle is inside the ball, respectively. These seven types of polygonal approximations are depicted in \cref{Fig:Polygon_Approx}. For a more detailed description, please refer to \cite{d2021cookbook}.

Based on the geometric estimate,
\begin{equation*}
{\Big|}B_{\delta}({\bf x})\ominus B_{\delta,n_{\delta}}^{\sharp}({\bf x}){\Big|}=\mathcal{O}(h^{\alpha}),\:\mbox{with}\:\alpha\in [1,2],
\end{equation*}
it was shown that
\begin{equation}\label{Error_gap_order}
{\Big\|}u_{\delta}^{h}-u_{\delta,n_{\delta}}^{h,\sharp}{\Big\|}_{L^2}\sim\mathcal{O}(h^{\alpha}),
\end{equation}
when the continuous linear finite element approximation is used, for which the rate of convergence is $\mathcal{O}(h^2)$ at best. For example, with $\sharp={\rm nocaps}$, $\alpha=2$ is shown in \cite{d2021cookbook}.
\begin{figure}[tbhp]
\centering
\vspace{-1cm}
\captionsetup[subfigure]{captionskip=-18pt}
\subfloat[\rm regular]{\includegraphics[width=4.5cm]{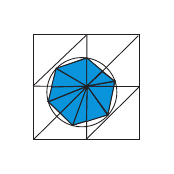}}\hspace{-1cm}
\subfloat[\rm nocaps]{\includegraphics[width=4.5cm]{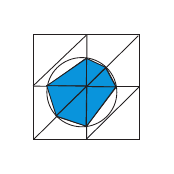}}\hspace{-1cm}
\subfloat[\rm approxcaps]{\includegraphics[width=4.5cm]{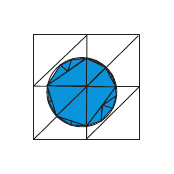}}\\
\vspace{-0.5cm}
\captionsetup[subfigure]{captionskip=2pt}
\subfloat[\rm 3vertices]{\includegraphics[width=3cm]{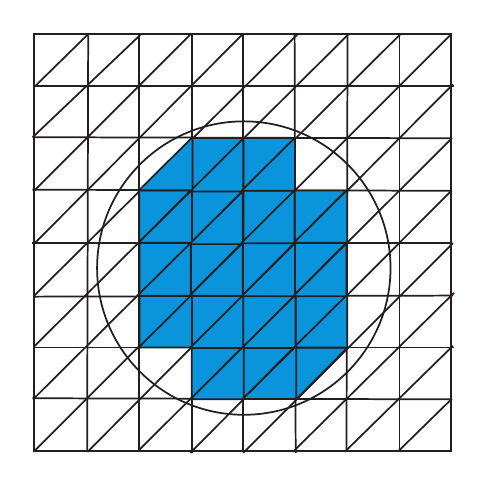}}
\subfloat[\rm 123vertices]{\includegraphics[width=3cm]{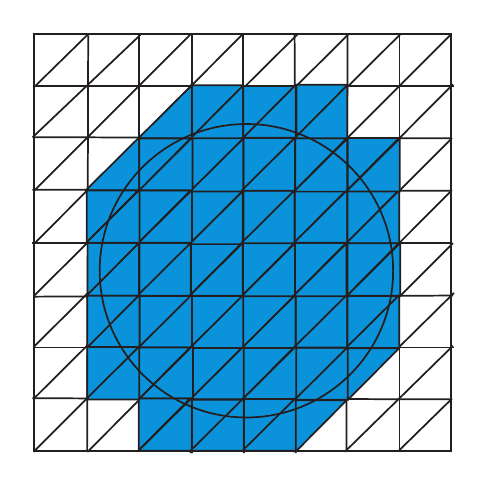}}
\subfloat[\rm 23vertices]{\includegraphics[width=3cm]{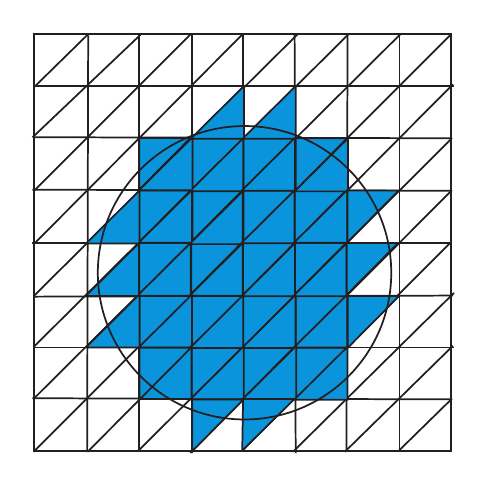}}
\subfloat[\rm barycenter]{\includegraphics[width=3cm]{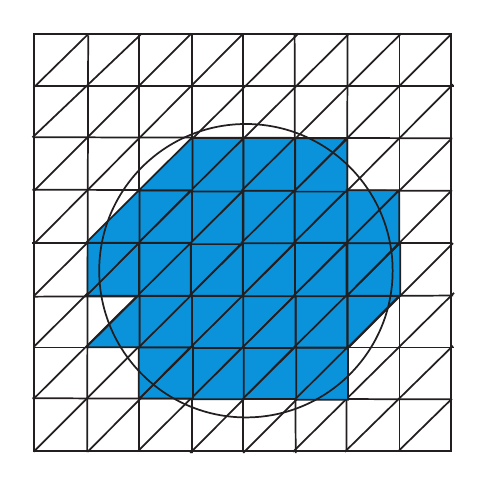}}
\caption{Polygonally approximated balls}\label{Fig:Polygon_Approx}
\end{figure}

While the existing convergence results are encouraging, we note that the estimate \cref{Error_h} is derived by ignoring the dependence on the parameter $\delta$.
Indeed, the development in \cite{d2021cookbook} was for the case of a finite fixed $\delta >0$ and for a specific mesh.
A careful examination of the assumption on the kernel for achieving \cref{Error_h} and the detailed proof in \cite{d2021cookbook} indicates that \cref{Error_h}  does not hold uniformly as $\delta\to 0$. In fact, to reflect the dependence on $\delta$, the constant $K$ in \cref{Error_h} needs to be modified as $K(\gamma_{\delta})$, which is defined by
\begin{equation}\label{K_gamma}
K(\gamma_{\delta})=\sup_{{\bf x}\in\Omega}\left(\int_{\left(B_{\delta}({\bf x})\ominus B_{\delta,n_{\delta}}^{\sharp}({\bf x})\right)\cap\Omdh}\gamma_{\delta}({\bf x},{\bf y})d{\bf y}\right).
\end{equation}
For the constant kernel considered in \cite[Section 8]{d2021cookbook}, that is
\begin{equation}\label{const_2d}
\gamma(t)=\frac{4}{\pi},\:\: \mbox{for}\:\: 0\leq t< 1,
\end{equation}
one obtains $K(\gamma_{\delta})\sim\mathcal{O}(\delta^{-4})$ according to \cref{K_gamma}. Thus \cref{Error_gap_order} becomes
\begin{equation}\label{diverge}
{\Big\|}u_{\delta}^{h}-u_{\delta,n_{\delta}}^{h,\sharp}{\Big\|}_{L^2}\sim\mathcal{O}(\delta^{-4}h^{\alpha}),\:\mbox{with}\:\alpha\in [1,2],
\end{equation}
which means that one may not always expect the convergence for any pair of $(\delta,h)$ as both $\delta\rightarrow 0$ and $h\rightarrow 0$. As suggested in \cite{silling2005meshfree}, the ratio $\delta/h=3$ is usually adopted in practice for macroscale modeling by peridynamics model which is a popular class of nonlocal models. With this choice of the horizon parameter and mesh parameter, the convergence likely might not hold according to \cref{diverge}.
This motivates the current study. However, to avoid complications, we choose to first examine the effect of polygonal approximations of Euclidean balls on the continuum level in this work, while leaving the study that also accounts for the numerical discretization to subsequent investigations. Indeed, our approach can also be adopted in the discrete setting so that similar results can also be established for numerical
approximations. For example, if continuous linear finite element method is used, the question raised earlier could be answered afresh: for all types of polygonal approximations of Euclidean balls listed in \cref{appro_ball}, if the ratio $\delta/h$ is uniformly bounded, ${\big\|}u_{\delta}^{h}-u_{\delta,n_{\delta}}^{h,\sharp}{\big\|}_{L^2}$ may not converge to zero as $\delta\rightarrow 0$.

Note that in the theory developed in this paper, the operator $\mathcal{L}_{\delta,n_{\delta}}$ is not assumed to depend on the mesh used for numerical discretization. However, if such a mesh is provided at hand, then it could be used to construct polygons, like those types of polygons in \cref{Fig:Polygon_Approx}.

\subsection{Nonlocal operators with polygonal approximations of
circular interaction neighborhoods}\label{sec:nonl}
We aim to study the convergence property for a series of continuum nonlocal operators $\mathcal{L}_{\delta,n_{\delta}}$ as $\delta\to 0$, which depends on a family of polygons that approximate the circular interaction neighborhoods (Euclidean balls).

As in \cite{du2012analysis} the nonlocal energy norm and nonlocal volume constrained energy space are defined by
\begin{equation*}
\| u\|_{\delta} :=\left(\int_{\Omdh}\int_{\Omdh}{\big(}u({\bf y})-u({\bf x}){\big)}^2\gamma_{\delta}({\bf x},{\bf y})d{\bf y}d{\bf x}\right)^{1/2},
\end{equation*}
and
\begin{equation*}
V(\Omdh):={\Big\{}u\in L^2(\Omdh): \| u\|_{\delta}<\infty, u({\bf x})=0\:\mbox{on} \:\Omdc{\Big\}},
\end{equation*}
respectively.
\begin{figure}[tbhp]
\centering
\subfloat[convex polygon]{\includegraphics[width=4cm]{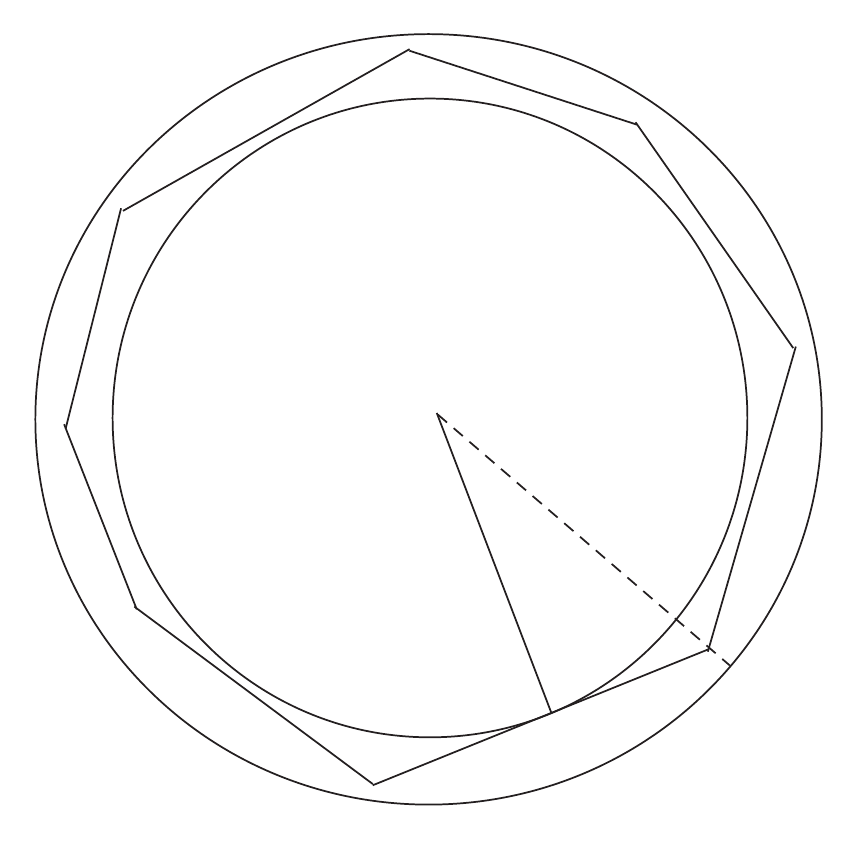}\put(-62,57){${\bf x}$}
\put(-61,29){$r_{\delta,{\bf x}}$}
\put(-32,38){$\delta$}}\hspace{1cm}
\subfloat[non-convex polygon]{\includegraphics[width=4cm]{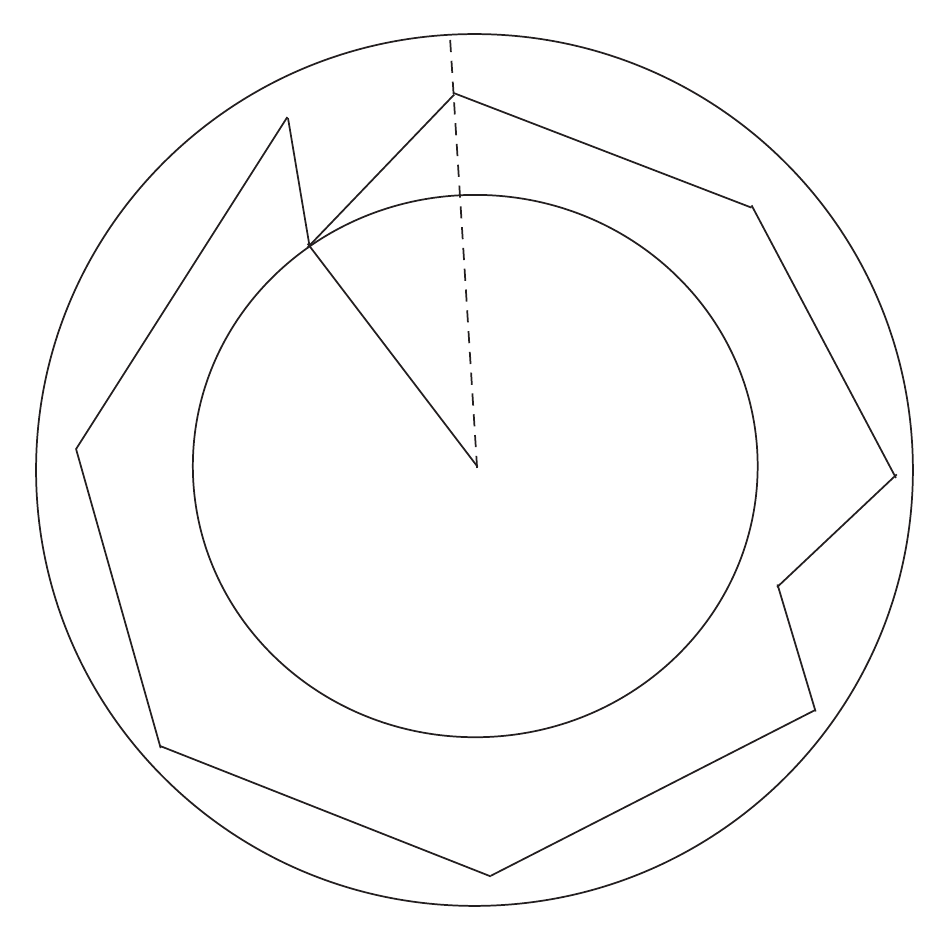}\put(-60,51){${\bf x}$}
\put(-80,65){$r_{\delta,{\bf x}}$}
\put(-56,72){$\delta$}}
\caption{Polygons in the ball $B_{\delta}({\bf x})$}\label{Fig:Polygon}
\end{figure}
For any ${\bf x}\in\Omega$, a polygon inside the ball $B_{\delta}({\bf x})$ is denoted by $B_{\delta,n_{\delta,{\bf x}}}(\bf x)$, or simply $B_{\delta,n_{\delta,{\bf x}}}$, where $n_{\delta,{\bf x}}$ denotes the number of sides for the polygon. Furthermore, to make use of \cref{rescaled_kernel}, the shifted and re-scaled polygon of $B_{\delta,n_{\delta,{\bf x}}}$ is defined as
\begin{equation}\label{Affine_map}
B_{1,n_{\delta,{\bf x}}}({\bf 0})={\big\{}{\bf z}=\left({\bf y}-{\bf x}\right)/\delta:{\bf y}\in B_{\delta,n_{\delta,{\bf x}}}{\big\}}.
\end{equation}
For a given $\delta>0$, we introduce the notation
\begin{equation}
n_{\delta}=\sup_{{\bf x}\in\Omega}n_{\delta,{\bf x}},\quad\text{and}\quad n_{\delta,\inf}=\inf_{{\bf x}\in\Omega}n_{\delta,{\bf x}}.
\end{equation}
Meanwhile, let $r_{\delta,{\bf x}}$ (see \cref{Fig:Polygon}) be the radius of the largest inscribed ball (centered at ${\bf x}$) of $B_{\delta,n_{\delta,{\bf x}}}$, and
\begin{equation}\label{under_delta}
\underline{r}(n_{\delta})=\inf_{{\bf x}\in\Omega}r_{\delta,{\bf x}}.
\end{equation}

Set
\begin{equation*}
\gamma_{\delta,n_{\delta,{\bf x}}}({\bf x},{\bf y}) =\left \{
     \begin{array}{ll}
\gamma_{\delta}({\bf x},{\bf y}), & {\bf y}\in B_{\delta,n_{\delta,{\bf x}}},\\
0, & {\bf y}\notin B_{\delta,n_{\delta,{\bf x}}},
     \end{array}
     \right.
\end{equation*}
which is not always symmetric with respect to ${\bf x}$ and ${\bf y}$ since ${\bf y}\in B_{\delta,n_{\delta,{\bf x}}}$ is not always equivalent to ${\bf x}\in B_{\delta,n_{\delta,{\bf y}}}$. In order to inherit the symmetry of the original kernel $\gamma_{\delta}$, which is crucial to make the nonlocal Green's first identity \cite{du2013nonlocal} valid, we define a new family of kernels
\begin{equation}\label{kernel}
\gamma_{\delta,n_{\delta}}({\bf x},{\bf y})=\frac{\gamma_{\delta,n_{\delta,{\bf x}}}({\bf x},{\bf y})+\gamma_{\delta,n_{\delta,{\bf y}}}({\bf y},{\bf x})}{2},
\end{equation}
which are symmetric with respect to ${\bf x}$ and ${\bf y}$ but are not radial, i.e., not a function of $\|{\bf x}-{\bf y}\|_2$. Note that there is a special family of ${\big\{} B_{\delta,n_{\delta,{\bf x}}}{\big\}}$ such that the corresponding $\gamma_{\delta,n_{\delta,{\bf x}}}({\bf x},{\bf y})$ are symmetric with respect to ${\bf x}$ and ${\bf y}$ if $B_{\delta,n_{\delta,{\bf x}}}$ is defined as the same inscribed regular even-sided polygon of $B_{\delta}({\bf x})$ for any ${\bf x}$. Here, `the same' polygon means one having the identical shape and orientation, but shifted to be around ${\bf x}$. The symmetry is guaranteed by the fact that an inscribed regular even-sided polygon is symmetric about the point ${\bf x}$. We denote this special family of ${\big\{} B_{\delta,n_{\delta,{\bf x}}}{\big\}}$ as ${\big\{} B_{\delta,n}{\big\}}$.

Based on the definition of $\gamma_{\delta,n_{\delta}}$, a new family of nonlocal operators and the corresponding class of  nonlocal problems are defined as
\begin{equation}\label{disc_nonlocal_oper}
\mathcal{L}_{\delta,n_{\delta}}u({\bf x})=2\int_{\mathbb{R}^{2}}{\big (}u({\bf y})-u({\bf x}){\big )}\gamma_{\delta,n_{\delta}}({\bf x},{\bf y})d{\bf y}\quad \forall {\bf x}\in\Omega,
\end{equation}
and
\begin{equation}\label{nonlocal_approx}
     \left \{
     \begin{array}{rll}
-\mathcal{L}_{\delta,n_{\delta}}u_{\delta,n_{\delta}}({\bf x})&=f_{\delta}({\bf x})& \mbox{on}\: \Omega, \\
u_{\delta,n_{\delta}}({\bf x})&=0 & \mbox{on} \:\Omdc.
     \end{array}
     \right .
\end{equation}
The nonlocal energy norm associated with $-\mathcal{L}_{\delta,n_{\delta}}$ is defined as follows
\begin{equation*}
\|u({\bf x})\|_{\delta,n_{\delta}}=\left(\int_{\Omdh}\int_{\Omdh}{\big (}u({\bf y})-u({\bf x}){\big )}^{2}\gamma_{\delta,n_{\delta}}({\bf x},{\bf y})d{\bf y}d{\bf x}\right)^{1/2}.
\end{equation*}
By changing the roles of variables ${\bf x}$ and ${\bf y}$, it holds that
\begin{equation*}
\int_{\Omdh}\int_{\Omdh}{\big (}u({\bf y})-u({\bf x}){\big )}^{2}\gamma_{\delta,n_{\delta,{\bf x}}}({\bf x},{\bf y})d{\bf y}d{\bf x}=\int_{\Omdh}\int_{\Omdh}{\big (}u({\bf y})-u({\bf x}){\big )}^{2}\gamma_{\delta,n_{\delta,{\bf y}}}({\bf y},{\bf x})d{\bf y}d{\bf x},
\end{equation*}
then
\begin{align*}
\|u({\bf x})\|_{\delta,n_{\delta}}^{2}&=\int_{\Omdh}\int_{\Omdh}{\big (}u({\bf y})-u({\bf x}){\big )}^{2}\gamma_{\delta,n_{\delta,{\bf x}}}({\bf x},{\bf y})d{\bf y}d{\bf x}\\
&=\int_{\Omdh}\int_{\Omdh\cap B_{\delta,n_{\delta,{\bf x}}}}{\big (}u({\bf y})-u({\bf x}){\big )}^{2}\gamma_{\delta}({\bf x},{\bf y})d{\bf y}d{\bf x}.
\end{align*}

\section{Discussion on the convergence diagram}\label{sec:convdia}
To borrow the notion introduced in \cite{tian2014asymptotically,tian2020asymptotically}, if  $u_{\delta,n_{\delta}}$ converges to $u_{0}$ for both dashed and dotted arrows in \cref{fig:diagram},
the polygonal approximation is said to possess the AC property. In other words, AC property ensures $u_{\delta,n_{\delta}}$ be convergent to $u_{0}$ as $\delta\rightarrow 0$ no matter $n_{\delta}$ stays bounded or not.

\subsection{Local limit of nonlocal solutions of models with circular interaction neighborhoods}\label{subsec:Limit_Euclid_ball}
Assume that $\{\delta_{k}\}_{k=1}^{\infty}$ is a decreasing sequence and $\delta_{k}\rightarrow 0$, while $u_{\delta_{k}}$ is the solution of the nonlocal problem \cref{nonlocal_diffusion} with $\delta=\delta_{k}$. From \cite{du2013nonlocal,mengesha2014bond,mengesha2015variational,tian2014asymptotically,tian2020asymptotically} we know that if $f_{\delta_{k}} \rightarrow f_{0}$ in $L^{2}$ or in a weaker sense, then $u_{\delta_{k}} \rightarrow u_{0}$, while $u_{0}\in H^{1}_{0}(\Omega)$ satisfies
\begin{equation}\label{local_weak}
\int_{\Omega}\nabla u_{0}({\bf x})\nabla\varphi({\bf x}) d{\bf x}=\int_{\Omega}f({\bf x})\varphi({\bf x})d{\bf x},\: \forall\varphi\in \mathcal{D}(\Omega),
\end{equation}
which is the weak form of the local problem \cref{local_diffusion}. This result corresponds to the solid arrow in \cref{fig:diagram} from $u_{\delta}$ to $u_{0}$. We next analyze the convergence property to the local solution $u_{0}$ for $u_{\delta_{k},n_{\delta_{k}}}$ which solves the nonlocal problem \cref{nonlocal_approx} with $\delta=\delta_{k}$.

\subsection{Loss of the AC property with approximate polygonal interaction neighborhoods}\label{subsec:lossAC}
For $q({\bf x})=\|{\bf x}\|_{2}^{2}$, simple calculation leads to
\begin{equation*}
\mathcal{L}_{\delta}q({\bf x})=2\int_{B_{\delta}({\bf 0})}\|{\bf z}\|_{2}^{2}\widetilde{\gamma}_{\delta}(\|{\bf z}\|_{2})d{\bf z}
=4,\quad \mathcal{L}_{0}q({\bf x})
=4.
\end{equation*}
On the other hand, by the symmetry of $\gamma_{\delta,n_{\delta}}$, for all ${\bf x}\in \Omega$,
\begin{align}\label{lower_b}
&\mathcal{L}_{\delta,n_{\delta}}q({\bf x})
=2\int_{B_{\delta}({\bf x})}{\big (}q({\bf y})-q({\bf x}){\big )}\gamma_{\delta,n_{\delta}}({\bf x},{\bf y})d{\bf y}\nonumber\\
&=2\int_{B_{\delta}({\bf x})}\|{\bf y}-{\bf x}\|_{2}^{2}\gamma_{\delta,n_{\delta}}({\bf x},{\bf y})d{\bf y}+4{\bf x}\cdot\int_{B_{\delta}({\bf x})}({\bf y}-{\bf x})\gamma_{\delta,n_{\delta}}({\bf x},{\bf y})d{\bf y}\nonumber\\
&=2\int_{B_{\delta}({\bf x})}\|{\bf y}-{\bf x}\|_{2}^{2}\gamma_{\delta,n_{\delta}}({\bf x},{\bf y})d{\bf y}\nonumber\\
&=\int_{B_{\delta}({\bf x})}\|{\bf y}-{\bf x}\|_{2}^{2}\gamma_{\delta,n_{\delta,{\bf x}}}({\bf x},{\bf y})d{\bf y}+\int_{B_{\delta}({\bf x})}\|{\bf y}-{\bf x}\|_{2}^{2}\gamma_{\delta,n_{\delta,{\bf y}}}({\bf y},{\bf x})d{\bf y}\nonumber\\
&\leq\int_{B_{\delta,n_{\delta,{\bf x}}}}\|{\bf y}-{\bf x}\|_{2}^{2}\gamma_{\delta}({\bf x},{\bf y})d{\bf y}+\int_{B_{\delta}({\bf x})}\|{\bf y}-{\bf x}\|_{2}^{2}\gamma_{\delta}({\bf y},{\bf x})d{\bf y}\nonumber\\
&=\int_{B_{1,n_{\delta,{\bf x}}}({\bf 0})}\|\boldsymbol \xi\|_{2}^{2}\gamma(\|{\boldsymbol \xi}\|_{2})d{\boldsymbol \xi}+2.
\end{align}

Now we show that $\mathcal{L}_{\delta,n_{\delta}}q({\bf x})$ does not converge to $\mathcal{L}_{0}q({\bf x})$ when $n_{\delta}$ is uniformly bounded with $\delta$, as $\delta\rightarrow 0$. This means the polygonal approximation does not possess AC property.
\begin{theorem}\label{thm:Notconv}
Assume that ${\big\{} B_{\delta,n_{\delta,{\bf x}}}{\big\}}$ is a family of polygons that satisfy
\begin{equation}\label{Subset}
B_{\delta,n_{\delta,{\bf x}}} \subset B_{\delta}({\bf x}),\: \forall \delta,\: \forall {\bf x} \in \Omega.
\end{equation}
The family of kernels $\{\gamma_{\delta}\}$ satisfies the conditions \cref{kernel_finite} and \cref{General_kernel}. If $n_{\delta}$ is uniformly bounded as $\delta\rightarrow 0$, then $\mathcal{L}_{\delta,n_{\delta}}q({\bf x})$ does not converge to $\mathcal{L}_{0}q({\bf x})$ for any ${\bf x} \in \Omega$.
\end{theorem}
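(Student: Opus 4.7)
The plan is to leverage the pointwise upper bound established in the display just before the theorem statement,
$\mathcal{L}_{\delta,n_{\delta}}q({\bf x}) \leq \int_{B_{1,n_{\delta,{\bf x}}}({\bf 0})}\|{\boldsymbol \xi}\|_{2}^{2}\gamma(\|{\boldsymbol \xi}\|_{2})d{\boldsymbol \xi} + 2$,
together with the identities $\mathcal{L}_0 q({\bf x}) = 4$ and $\int_{B_1({\bf 0})}\|{\boldsymbol \xi}\|_{2}^{2}\gamma(\|{\boldsymbol \xi}\|_{2})d{\boldsymbol \xi} = 2$ coming from the normalization in \cref{General_kernel}. It therefore suffices to produce a positive constant $c_N > 0$, depending only on the uniform upper bound $N := \sup_{\delta > 0}n_{\delta} < \infty$ and on $\gamma$, such that $\int_{B_{1,n_{\delta,{\bf x}}}({\bf 0})}\|{\boldsymbol \xi}\|_{2}^{2}\gamma(\|{\boldsymbol \xi}\|_{2})d{\boldsymbol \xi} \leq 2 - c_N$ for every admissible $\delta$ and every ${\bf x} \in \Omega$. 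Since the integrand is nonnegative, the integral over the (possibly non-convex) polygon $B_{1,n_{\delta,{\bf x}}}({\bf 0})$ is bounded by the integral over its convex hull, which is a convex polygon with at most $N$ vertices contained in $\overline{B_1({\bf 0})}$. The task thus reduces to bounding the functional $F(v_1,\ldots,v_N) := \int_{\mathrm{conv}(v_1,\ldots,v_N)}\|{\boldsymbol \xi}\|_{2}^{2}\gamma(\|{\boldsymbol \xi}\|_{2})d{\boldsymbol \xi}$ on the compact space $\overline{B_1({\bf 0})}^N$.

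The next step is a standard continuity-compactness argument. I would verify that $F$ is continuous on $\overline{B_1({\bf 0})}^N$: if $v^{(k)} \to v^*$, then $\mathrm{conv}(v^{(k)}) \to \mathrm{conv}(v^*)$ in Hausdorff distance, so their characteristic functions converge Lebesgue-almost everywhere, and dominated convergence with the $L^1(B_1({\bf 0}))$ majorant $\|{\boldsymbol \xi}\|_{2}^{2}\gamma(\|{\boldsymbol \xi}\|_{2})$ (integrability guaranteed by \cref{General_kernel}) delivers $F(v^{(k)}) \to F(v^*)$. Compactness of $\overline{B_1({\bf 0})}^N$ then ensures $M_N := \max F$ is attained at some $v^*$, producing an extremal convex polygon $P^*$ with at most $N$ vertices.

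The argument concludes by verifying the strict inequality $M_N < 2$ at $P^*$. Any convex polygon with at most $N$ vertices contained in $\overline{B_1({\bf 0})}$ has area at most $\tfrac{N}{2}\sin(2\pi/N) < \pi$, the area of the regular inscribed $N$-gon, so $|B_1({\bf 0}) \setminus P^*| > 0$. Combining this with the strict positivity $\gamma(t) > 0$ on $[0,1)$ from \cref{General_kernel} (see also \cref{comp_supp}) yields $\int_{B_1({\bf 0}) \setminus P^*}\|{\boldsymbol \xi}\|_{2}^{2}\gamma(\|{\boldsymbol \xi}\|_{2})d{\boldsymbol \xi} > 0$, hence $M_N < 2$. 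Putting everything together, $\mathcal{L}_{\delta,n_{\delta}}q({\bf x}) \leq M_N + 2 < 4 = \mathcal{L}_0 q({\bf x})$ uniformly in $\delta$ and ${\bf x}$, which rules out convergence.

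The main subtlety I anticipate is continuity of $F$ at degenerate vertex tuples (coincident or collinear vertices where the convex hull drops dimension); however, such degenerations only shrink the polygon and its integral, so the dominated-convergence step still applies and the strict bound $M_N < 2$ is preserved at any such limit. An alternative, more elementary route that avoids compactness would be to derive a direct geometric lower bound on $\int_{B_1({\bf 0}) \setminus P}\|{\boldsymbol \xi}\|_{2}^{2}\gamma(\|{\boldsymbol \xi}\|_{2})d{\boldsymbol \xi}$ over convex polygons with $\leq N$ vertices, but that would require pointwise assumptions on $\gamma$ near $\partial B_1({\bf 0})$, so the compactness argument is the cleanest path under the stated hypotheses.
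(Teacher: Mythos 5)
Your proposal is correct, and it takes a genuinely different route from the paper's proof. The paper argues by a pigeonhole argument on the sides of the polygon: since the (rescaled) inscribed polygon $B_{1,n_{\delta,{\bf x}}}(\mathbf{0})$ has at most $N$ sides, the central angle of its longest side is at least $2\pi/N$, so the complement $B_1(\mathbf{0})\setminus B_{1,n_{\delta,{\bf x}}}(\mathbf{0})$ contains a circular cap of central angle $\geq 2\pi/N$; by radial symmetry of $\|\boldsymbol{\xi}\|_2^2\gamma(\|\boldsymbol{\xi}\|_2)$, the integral over any such cap depends only on the central angle and is nondecreasing in it, which gives the uniform positive gap directly. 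The non-inscribed cases are then handled by two further reductions (enclosing a convex polygon in an inscribed one with the same number of sides, and a non-convex polygon in a convex one with fewer sides). Your approach instead passes immediately to the convex hull (which collapses the paper's three cases into one), parametrizes all candidate convex hulls by vertex tuples in the compact set $\overline{B_1(\mathbf{0})}^N$, and extracts the uniform gap by continuity of the integral functional plus compactness, with strictness following because a polygon cannot exhaust the disk. The paper's route is more elementary and produces an explicit lower bound on $\mathcal{L}_0 q - \mathcal{L}_{\delta,n_\delta}q$ in terms of a cap of angle $2\pi/N$; yours is shorter in structure but relies on a Hausdorff-continuity and dominated-convergence step that you correctly flag as the delicate point (degenerate vertex configurations), and it is nonconstructive in that $M_N$ is only known to exist, not computed. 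Both are valid; the sharp-area bound $\tfrac{N}{2}\sin(2\pi/N)<\pi$ you invoke is more than needed, since any polygon trivially has measure strictly less than $\pi$.
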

\begin{proof}
Since $n_{\delta}$ is uniformly bounded as $\delta\rightarrow 0$, there exists an integer $N$ such that $n_{\delta}\leq N$ for all $\delta$. We prove the theorem in three steps.

The first step, the conclusion is proven under the condition that ${\big\{} B_{\delta,n_{\delta,{\bf x}}}{\big\}}$ is a family of inscribed polygons of $B_{\delta}({\bf x})$. Here `inscribed' means all vertices of the polygon lie on the boundary of $B_{\delta}({\bf x})$. In this case the central angle of the longest side of the polygon $B_{1,n_{\delta,{\bf x}}}({\bf 0})$, denoted by $\theta_{n_{\delta,{\bf x}}}$, must be greater than or equal to $2\pi/n_{\delta}$. We consider the sector corresponding to the longest side, which is illustrated in (a) of \cref{Fig:Notconv} by the grey triangle and its abutting red cap. The red cap is denoted by $\widehat{C}_{n_{\delta,{\bf x}}}$, then $\widehat{C}_{n_{\delta,{\bf x}}}\subset B_{1}({\bf 0})\setminus B_{1,n_{\delta,{\bf x}}}({\bf 0})$ and
\begin{equation*}
{\big|}\widehat{C}_{n_{\delta,{\bf x}}}{\big|}=\frac{1}{2}\!\left(\theta_{n_{\delta,{\bf x}}}-\sin(\theta_{n_{\delta,{\bf x}}})\right)\geq \frac{1}{2}\!\left(\frac{2\pi}{n_{\delta}}-\sin\left(\frac{2\pi}{n_{\delta}}\right)\right)\geq \frac{1}{2}\!\left(\frac{2\pi}{N}-\sin\left(\frac{2\pi}{N}\right)\right),
\end{equation*}
which, together with \cref{lower_b}, leads to
\begin{align*}
\mathcal{L}_{0}q({\bf x})-\mathcal{L}_{\delta,n_{\delta}}q({\bf x})&\geq 2-\int_{B_{1,n_{\delta,{\bf x}}}({\bf 0})}\|\boldsymbol \xi\|_{2}^{2}\gamma(\|{\boldsymbol \xi}\|_{2})d{\boldsymbol \xi}\nonumber\\
&\geq \int_{B_{1}({\bf 0})\setminus B_{1,n_{\delta,{\bf x}}}({\bf 0})}\|\boldsymbol \xi\|_{2}^{2}\gamma(\|{\boldsymbol \xi}\|_{2})d{\boldsymbol \xi}
\geq \int_{\widehat{C}_{n_{\delta,{\bf x}}}}\|\boldsymbol \xi\|_{2}^{2}\gamma(\|{\boldsymbol \xi}\|_{2})d{\boldsymbol \xi}.
\end{align*}
Then by the condition \cref{comp_supp}, the conclusion is proven in this case.

As the second step, we consider the case that ${\big\{} B_{\delta,n_{\delta,{\bf x}}}{\big\}}$ is a family of convex polygons of $B_{\delta}({\bf x})$. There exists a family of inscribed polygons of $B_{\delta}({\bf x})$, which contains ${\big\{} B_{\delta,n_{\delta,{\bf x}}}{\big\}}$ and has the same numbers of sides with it, see (b) of \cref{Fig:Notconv}. The conclusion then follows from  the derivation given in the previous step.

Finally, as the third step, we deal with the case that ${\big\{} B_{\delta,n_{\delta,{\bf x}}}{\big\}}$ is a family of non-convex polygons of $B_{\delta}({\bf x})$. There exists a family of convex polygons of $B_{\delta}({\bf x})$, which contains ${\big\{} B_{\delta,n_{\delta,{\bf x}}}{\big\}}$ and has less numbers of sides than the latter, see (c) of \cref{Fig:Notconv}. Thus, the conclusion follows again from the previous step.
\end{proof}
\begin{figure}[tbhp]
\centering
\subfloat[inscribed polygon]{\includegraphics[width=4cm]{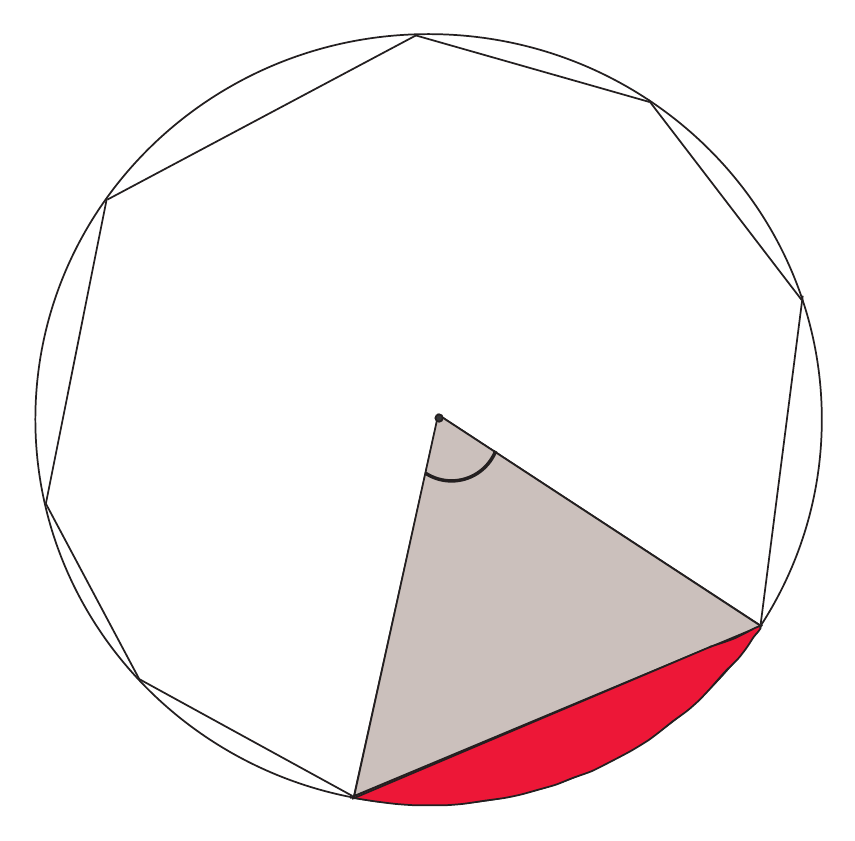}\put(-65,53){$O$}
\put(-54,40){$\theta_{n_{\delta,{\bf x}}}$}}
\hspace{0.2cm}
\subfloat[convex polygon]{\includegraphics[width=4cm]{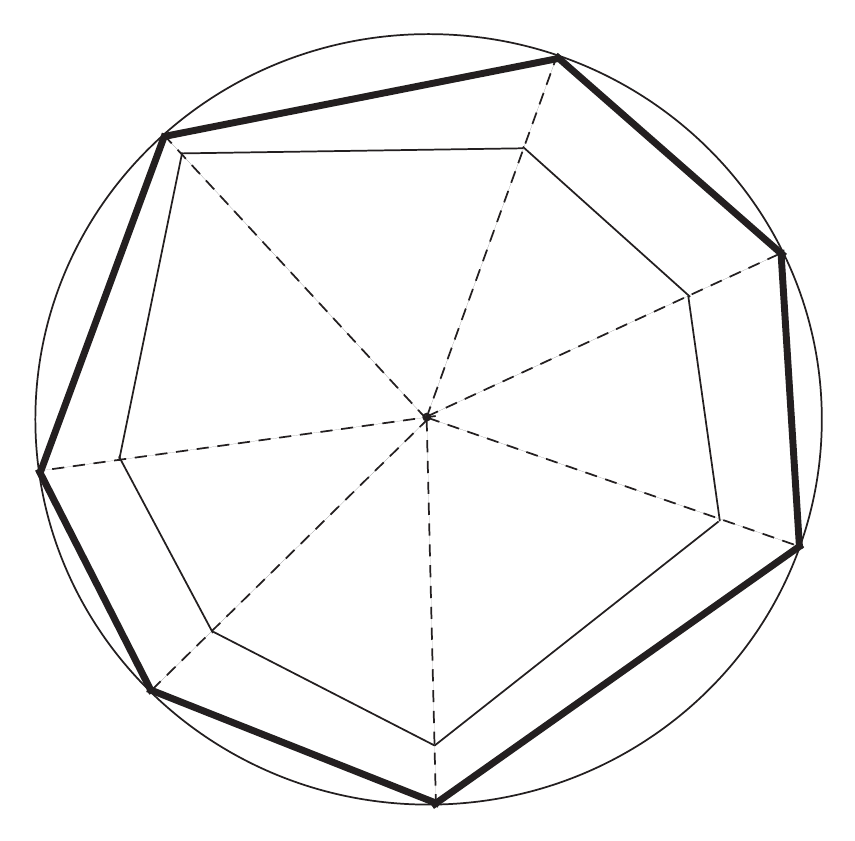}\put(-56,47){$O$}}\hspace{0.2cm}
\subfloat[non-convex polygon]{\includegraphics[width=4cm]{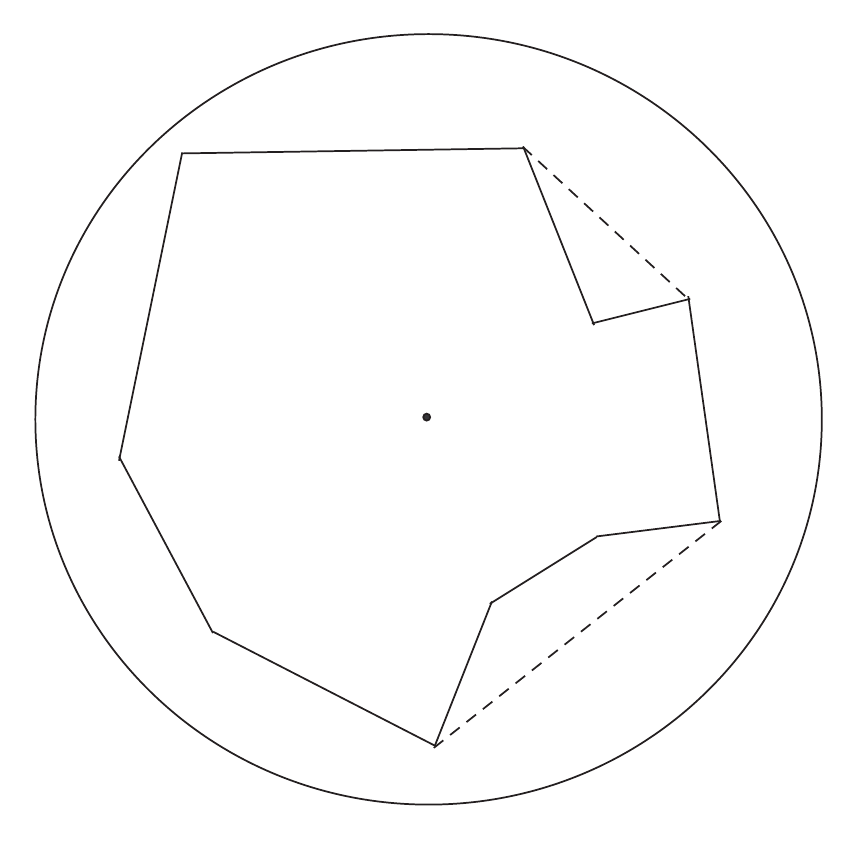}\put(-67,51){$O$}}
\caption{Different types of polygons in the ball $B_{1}({\bf 0})$}\label{Fig:Notconv}
\end{figure}
Now we consider a special case where the kernel is selected as \cref{const_2d}, while a family of inscribed regular polygons ${\big\{} B_{\delta,n}{\big\}}$ ($n\geq 4$ is a given even number) defined after \cref{kernel} is adopted. For this special case,
$\gamma_{\delta,n_{\delta}}({\bf x},{\bf y})=\gamma_{\delta,n_{\delta,{\bf x}}}({\bf x},{\bf y})$. Thus $\forall {\bf x}\in\Omega$,
\begin{equation}\label{conv_wrong}
\mathcal{L}_{\delta,n_{\delta}}q({\bf x})=2\int_{B_{1,n_{\delta,{\bf x}}}({\bf 0})}\!\|\boldsymbol \xi\|_{2}^{2}\gamma(\|{\boldsymbol \xi}\|_{2})d{\boldsymbol \xi}=C_{n}<4=\mathcal{L}_{0}q({\bf x}),
\end{equation}
with
\begin{equation*}
C_{n}=\frac{\sin(2\pi/n)}{\pi/2n}\cdot\frac{2+\cos(2\pi/n)}{3}.
\end{equation*}
This means, for a fixed $n$, $\mathcal{L}_{\delta,n_{\delta}}q({\bf x})$ is a constant $C_n/4$ multiple of $\mathcal{L}_{0}q({\bf x})$, rather than $\mathcal{L}_{0}q({\bf x})$ itself. Thus, we do not have the convergence of
$\mathcal{L}_{\delta,n_{\delta}}q({\bf x})$ to $\mathcal{L}_{0}q({\bf x})$ as $\delta\to 0$, though a re-scaled version does give a consistent limit since
 \begin{equation}\label{conv_scale}
\frac{4}{C_n}\mathcal{L}_{\delta,n_{\delta}}q({\bf x})=\mathcal{L}_{0}q({\bf x}).
\end{equation}

Now, we observe that while for any fixed $n$, $C_n/4<1$, it holds that
$C_n/4 \to 1$ as $n\to \infty$. In fact,
if we could prove
\begin{equation}\label{oper_cond}
\lim\limits_{\delta\rightarrow 0, n_\delta\to \infty}{\big(}\mathcal{L}_{\delta,n_{\delta}}-\mathcal{L}_{0}{\big)}q({\bf x})=0,\:\forall {\bf x}\in\Omega.
\end{equation}
Then $u_{\delta,n_{\delta}}$ may converge to $u_{0}$ along the dotted arrow in \cref{fig:diagram} which means the polygonal approximation possesses conditional AC property. We now discuss this convergence issue in  subsequent subsections.

\subsection{Conditional AC property with approximate polygonal interaction neighborhoods}\label{sec:condAC}
\subsubsection{Local-nonlocal Green's formula}\label{subsec:Conv_2D}
As the starting point of the convergence analysis of the local limit, let us first establish an estimate that is referred as a local-nonlocal Green's formula.
Indeed, it is derived from the nonlocal Green's formula where part of the integrals are expanded in local forms. As $\delta\to 0$, it recovers the classical Green's formula.
\begin{lemma}\label{lemma:Green_Nonlocal} (local-nonlocal Green's formula). We have for all $ u\in V(\Omdh)$ and $\varphi\in \mathcal{D}(\Omega)$,
\begin{equation}\label{Green_Nonlocal}
\int_{\Omega}\!\varphi({\bf x})\mathcal{L}_{\delta,n_{\delta}}u({\bf x})d{\bf x}
=\int_{\Omega}u({\bf x})\sum_{i=1}^{2}\sigma_{n_{\delta},i}({\bf x})\partial_{ii}^{2}\varphi({\bf x})d{\bf x}+\mathcal{O}{\big(}\delta^2{\big)}|\varphi|_{4,\infty}\|u\|_{L^1(\Omega)},
\end{equation}
with
\begin{equation}\label{def:sigma}
\sigma_{n_{\delta},i}({\bf x})=\int_{B_{\delta}({\bf x})}(y_{i}-x_{i})^{2}\gamma_{\delta,n_{\delta}}({\bf x},{\bf y})d{\bf y},\; i=1,2.
\end{equation}
\end{lemma}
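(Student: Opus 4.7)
The plan is to transfer all nonlocal differentiation onto the smooth test function $\varphi$ via a nonlocal Green's identity, Taylor expand, and then show that only the diagonal second-moment contribution survives to leading order. I would begin by writing $I:=\int_{\Omega}\varphi\,\mathcal{L}_{\delta,n_{\delta}}u\,d\mathbf{x}$ as a double integral $2\int\!\!\int\varphi(\mathbf{x})(u(\mathbf{y})-u(\mathbf{x}))\gamma_{\delta,n_{\delta}}(\mathbf{x},\mathbf{y})\,d\mathbf{y}\,d\mathbf{x}$, swap $\mathbf{x}\leftrightarrow\mathbf{y}$ in one of the resulting pieces using the $\mathbf{x}\leftrightarrow\mathbf{y}$ symmetry of $\gamma_{\delta,n_{\delta}}$ built into \cref{kernel}, and recombine to arrive at $I=\int_{\Omega}u(\mathbf{x})\,\mathcal{L}_{\delta,n_{\delta}}\varphi(\mathbf{x})\,d\mathbf{x}$. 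The support conditions $\varphi\in\mathcal{D}(\Omega)$ and $u\equiv 0$ on $\Omdc$ take care of all boundary contributions arising in this exchange.

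Next, with the differentiation now acting on the smooth $\varphi$, I would Taylor expand $\varphi(\mathbf{y})-\varphi(\mathbf{x})$ around $\mathbf{x}$ through order three with Lagrange remainder $R_{4}$ satisfying $|R_{4}|\le \tfrac{1}{24}|\varphi|_{4,\infty}|\mathbf{y}-\mathbf{x}|^{4}$. Substituting this expansion into $\mathcal{L}_{\delta,n_{\delta}}\varphi(\mathbf{x})=2\int(\varphi(\mathbf{y})-\varphi(\mathbf{x}))\gamma_{\delta,n_{\delta}}(\mathbf{x},\mathbf{y})\,d\mathbf{y}$ and grouping by derivative order, the diagonal second-moment piece produces exactly $\sum_{i}\sigma_{n_{\delta},i}(\mathbf{x})\partial_{ii}^{2}\varphi(\mathbf{x})$, matching the main term in \cref{Green_Nonlocal}. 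The fourth-order remainder is harmless: on $\mathrm{supp}\,\gamma_{\delta,n_{\delta}}\subset B_{\delta}(\mathbf{x})$ one has $|\mathbf{y}-\mathbf{x}|^{4}\le\delta^{2}|\mathbf{y}-\mathbf{x}|^{2}$, and since $\gamma_{\delta,n_{\delta}}\le\gamma_{\delta}$ the second-moment normalization in \cref{General_kernel} yields $\int|\mathbf{y}-\mathbf{x}|^{2}\gamma_{\delta,n_{\delta}}\,d\mathbf{y}\le 2$, so integrating against $u$ produces a contribution of size $\mathcal{O}(\delta^{2})|\varphi|_{4,\infty}\|u\|_{L^{1}(\Omega)}$.

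The main obstacle is showing that the remaining auxiliary pieces---the first-moment $\int(y_{i}-x_{i})\gamma_{\delta,n_{\delta}}\,d\mathbf{y}$, the off-diagonal second moments $\int(y_{i}-x_{i})(y_{j}-x_{j})\gamma_{\delta,n_{\delta}}\,d\mathbf{y}$ with $i\ne j$, and the third moments---all aggregate into an error of the same order. A direct scaling argument shows the first-moment integral can be as large as $\mathcal{O}(\delta^{-1})$ pointwise, because the symmetrized $\gamma_{\delta,n_{\delta}}$ need not be centrally symmetric about $\mathbf{x}$, so pointwise estimates of the moment itself do not suffice. My plan is to recast each such contribution as a double integral over $\Omdh\times\Omdh$, exploit once more the $\mathbf{x}\leftrightarrow\mathbf{y}$ symmetry of $\gamma_{\delta,n_{\delta}}$ together with the antisymmetry of the odd monomial in $\mathbf{y}-\mathbf{x}$ to rewrite the integrand in the telescoped form $[u(\mathbf{x})\partial^{\alpha}\varphi(\mathbf{x})-u(\mathbf{y})\partial^{\alpha}\varphi(\mathbf{y})](y_{i_{1}}-x_{i_{1}})\cdots(y_{i_{2\ell+1}}-x_{i_{2\ell+1}})\gamma_{\delta,n_{\delta}}$, and then Taylor expand $\partial^{\alpha}\varphi$ around $\mathbf{x}$ so that each iteration injects an additional factor of $(\mathbf{y}-\mathbf{x})$. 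After enough iterations the residual moment integrals carry $|\mathbf{y}-\mathbf{x}|^{2k}$ with $k\ge 2$, which combined with $|\mathbf{y}-\mathbf{x}|^{2k}\le\delta^{2(k-1)}|\mathbf{y}-\mathbf{x}|^{2}$ on $B_{\delta}(\mathbf{x})$ and the bound $\int|\mathbf{y}-\mathbf{x}|^{2}\gamma_{\delta,n_{\delta}}\,d\mathbf{y}\le 2$ delivers the required $\mathcal{O}(\delta^{2})|\varphi|_{4,\infty}\|u\|_{L^{1}(\Omega)}$ estimate. The first-moment term is the most delicate, needing the most iterations of this symmetrization-plus-Taylor procedure before the expected cancellations make its contribution small enough.
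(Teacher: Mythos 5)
Your opening step matches the paper exactly: move $\mathcal{L}_{\delta,n_{\delta}}$ from $u$ onto $\varphi$ by the symmetry of $\gamma_{\delta,n_{\delta}}$ and the vanishing of $u,\varphi$ outside $\Omega$, then Taylor expand. The parting of ways is what happens to the odd-order and off-diagonal moment terms. The paper's proof simply declares that because $(\mathbf{y}-\mathbf{x})^{\boldsymbol\alpha}$ is odd for odd $|\boldsymbol\alpha|$, its product with $\gamma_{\delta,n_{\delta}}(\mathbf{x},\mathbf{y})$ integrates to zero in $\mathbf{y}$ for each fixed $\mathbf{x}$, and likewise suppresses the mixed moment $\int(y_1-x_1)(y_2-x_2)\gamma_{\delta,n_{\delta}}\,d\mathbf{y}$ — both of which require $\gamma_{\delta,n_{\delta}}(\mathbf{x},\mathbf{x}+\mathbf{z})$ to have the reflection/rotation symmetries of the radial kernel in $\mathbf{z}$, a property that the symmetrization \cref{kernel} alone does not supply. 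You are right to flag this as the genuine difficulty, and your instinct to push the argument back to the double integral, exploit the $\mathbf{x}\leftrightarrow\mathbf{y}$ symmetry, and Taylor expand $\partial^{\boldsymbol\alpha}\varphi$ is the natural first move.

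The iteration you propose, however, does not close, and this is a real gap rather than an omitted detail. After symmetrizing the first-moment contribution you arrive at
$\frac12\int\!\!\int\bigl[u(\mathbf{x})\partial_i\varphi(\mathbf{x})-u(\mathbf{y})\partial_i\varphi(\mathbf{y})\bigr](y_i-x_i)\gamma_{\delta,n_{\delta}}\,d\mathbf{y}\,d\mathbf{x}$,
and the only way to Taylor expand is in $\varphi$, since $u\in V(\Omdh)$ has no pointwise Taylor expansion. Any splitting of the bracket, e.g.\ into $[u(\mathbf{x})-u(\mathbf{y})]\partial_i\varphi(\mathbf{x})+u(\mathbf{y})[\partial_i\varphi(\mathbf{x})-\partial_i\varphi(\mathbf{y})]$, leaves behind a residual of the form $\int\!\!\int[u(\mathbf{x})-u(\mathbf{y})]\partial_i\varphi(\mathbf{x})(y_i-x_i)\gamma_{\delta,n_{\delta}}\,d\mathbf{y}\,d\mathbf{x}$ that gains no further power of $|\mathbf{y}-\mathbf{x}|$ on re-symmetrization (the swap only redistributes the $\partial_i\varphi$ evaluation, it does not produce a $\varphi$-difference). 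This residual is of size $\mathcal{O}(1)\,\|u\|_{L^1(\Omega)}|\varphi|_{1,\infty}$ by the crude bound $|u(\mathbf{x})-u(\mathbf{y})|\le|u(\mathbf{x})|+|u(\mathbf{y})|$ together with $\int|\mathbf{y}-\mathbf{x}|^2\gamma_{\delta,n_{\delta}}\,d\mathbf{y}\le2$, and Cauchy--Schwarz against the energy norm only trades this for $\mathcal{O}(1)\,\|u\|_{\delta,n_{\delta}}|\varphi|_{1,\infty}$; neither is the claimed $\mathcal{O}(\delta^2)|\varphi|_{4,\infty}\|u\|_{L^1(\Omega)}$. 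So the iteration stalls at the very first moment term and never reaches $k\ge2$. To obtain \cref{Green_Nonlocal} one in fact needs a structural hypothesis ensuring the odd and mixed moments of $\gamma_{\delta,n_{\delta}}(\mathbf{x},\cdot)$ vanish pointwise — for instance that the polygons are centrally symmetric about their centers, or that $\{B_{\delta,n_{\delta,\mathbf{x}}}\}$ is translation-invariant so the two indicators in \cref{kernel} are reflections of one another — which is the assumption the paper's proof is implicitly invoking when it asserts the odd-moment integrals vanish.
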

\begin{proof}
Due to the symmetry of the kernel $\gamma_{\delta,n_{\delta}}({\bf x},{\bf y})$ with respect to ${\bf x}$ and ${\bf y}$, we have for all $u\in V(\Omdh)$ and $\varphi\in \mathcal{D}(\Omega)$,
\begin{align*}
&\int_{\Omega}\varphi({\bf x})\mathcal{L}_{\delta,n_{\delta}}u({\bf x})d{\bf x}=2\int_{\Omega}\varphi({\bf x})\int_{\Omdh}{\big(}u({\bf y})-u({\bf x}){\big)}\gamma_{\delta,n_{\delta}}({\bf x},{\bf y})d{\bf y}d{\bf x}\\
&=2\int_{\Omdh}\varphi({\bf x})\int_{\Omdh}{\big(}u({\bf y})-u({\bf x}){\big)}\gamma_{\delta,n_{\delta}}({\bf x},{\bf y})d{\bf y}d{\bf x}\\
&=2\int_{\Omdh}u({\bf x})\int_{\Omdh}{\big(}\varphi({\bf y})-\varphi({\bf x}){\big)}\gamma_{\delta,n_{\delta}}({\bf x},{\bf y})d{\bf y}d{\bf x}
=\int_{\Omega}u({\bf x})\mathcal{L}_{\delta,n_{\delta}}\varphi({\bf x})d{\bf x},
\end{align*}
which is in fact a kind of the nonlocal Green's first identity \cite{du2013nonlocal}, however, for the newly defined nonlocal operator $\mathcal{L}_{\delta,n_{\delta}}$,  rather than $\mathcal{L}_{\delta}$.
Since $\varphi\in \mathcal{D}(\Omega)$ is smooth, we expand the difference $\varphi({\bf y})-\varphi({\bf x})$ by the Taylor polynomial and proceed as follows
\begin{equation*}
\mathcal{L}_{\delta,n_{\delta}}\varphi({\bf x})=2\int_{\Omdh}\left(
\sum_{|{\boldsymbol{\alpha}}|\in \mathbb{N}^{+}}\frac{({\bf y}-{\bf x})^{\boldsymbol{\alpha}}}{{\boldsymbol{\alpha}}!}d^{{\boldsymbol{\alpha}}}\varphi({\bf x})\right)\gamma_{\delta,n_{\delta}}({\bf x},{\bf y})d{\bf y}.
\end{equation*}
Since all polynomials $({\bf y}-{\bf x})^{\boldsymbol{\alpha}}$ for odd $|\boldsymbol{\alpha}|$ are odd functions, their products with the kernel $\gamma_{\delta,n_{\delta}}({\bf x},{\bf y})$ vanish identically under the integral. Thus
\begin{align*}
&\mathcal{L}_{\delta,n_{\delta}}\varphi({\bf x})=2\int_{\Omdh}\left(
\sum_{|{\boldsymbol{\alpha}}|=2}\frac{({\bf y}-{\bf x})^{\boldsymbol{\alpha}}}{{\boldsymbol{\alpha}}!}d^{{\boldsymbol{\alpha}}}\varphi({\bf x})+\sum_{|{\boldsymbol{\alpha}}|= 4}\frac{({\bf y}-{\bf x})^{\boldsymbol{\alpha}}}{{\boldsymbol{\alpha}}!}d^{{\boldsymbol{\alpha}}}\varphi(\boldsymbol{\xi})\right)\gamma_{\delta,n_{\delta}}({\bf x},{\bf y})d{\bf y}\\
&=\sum_{i=1}^{2}\partial_{ii}^{2}\varphi({\bf x})\!\int_{\Omdh}(y_{i}-x_{i})^{2}\gamma_{\delta,n_{\delta}}({\bf x},{\bf y})d{\bf y}+2\!\sum_{|{\boldsymbol{\alpha}}|= 4}\!\int_{\Omdh}\!d^{{\boldsymbol{\alpha}}}\varphi(\boldsymbol{\xi})\gamma_{\delta,n_{\delta}}({\bf x},{\bf y})
\frac{({\bf y}-{\bf x})^{\boldsymbol{\alpha}}}{{\boldsymbol{\alpha}}!}d{\bf y},
\end{align*}
where $\boldsymbol{\xi}$ depends on $\bf y$ and $\|\boldsymbol{\xi}-{\bf x}\|_{2}\leq\|{\bf y}-{\bf x}\|_{2}$.
Next, the second term is shown to be of high order. Only the following two kinds of sub-terms need to be dealt with:
\begin{equation*}
\int_{\Omdh}\frac{\partial^{4}\varphi(\boldsymbol{\xi})}{\partial x_{1}^{4}}(y_{1}-x_{1})^{4}\gamma_{\delta,n_{\delta}}({\bf x},{\bf y})
d{\bf y},\:\:\mbox{and}\:\:
\int_{\Omdh}\frac{\partial^{4}\varphi(\boldsymbol{\xi})}{\partial x_{1}^{2}\partial x_{2}^{2}}(y_{1}-x_{1})^{2}(y_{2}-x_{2})^{2}\gamma_{\delta,n_{\delta}}({\bf x},{\bf y})
d{\bf y}.
\end{equation*}
For the sub-term of the first kind, we have by the condition \cref{General_kernel}
\begin{align*}
&\int_{\Omdh}\frac{\partial^{4}\varphi(\boldsymbol{\xi})}{\partial x_{1}^{4}}(y_{1}-x_{1})^{4}\gamma_{\delta,n_{\delta}}({\bf x},{\bf y})
d{\bf y}\leq\delta^{2}|\varphi|_{4,\infty}\int_{\Omdh}(y_{1}-x_{1})^{2}\gamma_{\delta,n_{\delta}}({\bf x},{\bf y})
d{\bf y}\\
&\leq \delta^{2}|\varphi|_{4,\infty}\int_{\Omdh}(y_{1}-x_{1})^{2}\gamma_{\delta}({\bf x},{\bf y})
d{\bf y}=\delta^{2}|\varphi|_{4,\infty}.
\end{align*}
The sub-term of the second kind could be dealt with similarly. Thus,
\begin{equation*}
\mathcal{L}_{\delta,n_{\delta}}\varphi({\bf x})=\sum_{i=1}^{2}\partial_{ii}^{2}\varphi({\bf x})\int_{\Omdh}(y_{i}-x_{i})^{2}\gamma_{\delta,n_{\delta}}({\bf x},{\bf y})d{\bf y}+\mathcal{O}{\big(}\delta^2{\big)}|\varphi|_{4,\infty},
\end{equation*}
we then complete the proof by the definition of $\sigma_{n_{\delta},i}({\bf x})$ in \cref{def:sigma}.
\end{proof}

By the definition of $\underline{r}(n_{\delta})$ in \cref{under_delta}, we have for all $u\in V(\Omdh)$,
\begin{align}\label{equiv}
&\int_{\Omdh}\int_{\Omdh}{\big|}u({\bf y})-u({\bf x}){\big|}^2\gamma_{\delta}({\bf x},{\bf y})\chi_{\underline{r}(n_{\delta})}(\|{\bf y}-{\bf x}\|_{2})d{\bf y}d{\bf x}\\
&\leq \int_{\Omdh}\int_{\Omdh}{\big|}u({\bf y})-u({\bf x}){\big|}^2\gamma_{\delta,n_{\delta}}({\bf x},{\bf y})d{\bf y}d{\bf x}\nonumber\\
&\leq \int_{\Omdh}\int_{\Omdh}{\big|}u({\bf y})-u({\bf x}){\big|}^2\gamma_{\delta}({\bf x},{\bf y})d{\bf y}d{\bf x}=\|u({\bf x})\|_{\delta}^{2}.\nonumber
\end{align}
We regard $\gamma_{\delta}({\bf x},{\bf y})\chi_{\underline{r}(n_{\delta})}(\|{\bf y}-{\bf x}\|_{2})$ as a new kernel, which, like $\gamma_{\delta}({\bf x},{\bf y})$, induces a norm
\begin{equation*}
\|u({\bf x})\|_{\underline{r}(n_{\delta})}=\left(\int_{\Omdh}\int_{\Omdh}{\big|}u({\bf y})-u({\bf x}){\big|}^2\gamma_{\delta}({\bf x},{\bf y})\chi_{\underline{r}(n_{\delta})}(\|{\bf y}-{\bf x}\|_{2})d{\bf y}d{\bf x}\right)^{1/2}.
\end{equation*}
Then from \cref{equiv} we know that for all $u\in V(\Omdh)$
\begin{equation}\label{equiv_norm}
\|u({\bf x})\|_{\underline{r}(n_{\delta})}\leq\|u({\bf x})\|_{\delta,n_{\delta}}\leq\|u({\bf x})\|_{\delta},
\end{equation}
which in fact holds for all $u\in L^{2}(\Omdh)$ with $u({\bf x})=0$ for ${\bf x}\in\Omdc$, if terms in \cref{equiv_norm}
are allowed to be infinite. Like \cite{bourgain2001another,brezis2002recognize}, we prove the corresponding pointwise convergence for $\|u\|_{\delta,n_{\delta}}^{2}$ in general $d$ dimensional case
under the condition $u\in H^{1}(\Omdh)$ which is a homogeneous extension from a function in $H_0^1(\Omega)$, see \cref{sec:appendix}.

\subsubsection{Sufficient conditions to ensure correct local limit}\label{subsec:condlimit}
Now, we are ready to show the $L^{2}$-convergence of $u_{\delta,n_{\delta}}$ to $u_{0}$ as $\delta\rightarrow 0$. For this purpose we need
to provide a compactness result and a uniform nonlocal Poincar\'{e}-type
inequality which enable us to carry on the same convergence proof as in
\cite[Theorem 2.5]{tian2014asymptotically}. For brevity we use $D$ to stand for $\widehat{\Omega}_{\delta_{0}}$ with $\delta\leq\delta_{0}$ for all $\delta$ or $\delta_{k}\leq\delta_{0}$ for all $k$, which depends on the situation. This notation is also used in \cref{sec:appendix}.

Set $\rho_{k}({\bf x},{\bf y})=\|{\bf y}-{\bf x}\|_{2}^2\gamma_{\delta_{k}}({\bf x},{\bf y})$. Since $\gamma_{\delta_{k}}$ satisfy \cref{kernel_finite} and \cref{General_kernel}, then $\{\rho_{k}\}$ is a sequence of radial functions and the following property holds for all ${\bf x}\in\Omega$
\begin{equation}\label{rho_cond_Stand}
     \left \{
     \begin{array}{l}
\rho_{k}({\bf x},{\bf y})\geq 0\:\: \mbox{in} \:\:\mathbb{R}^{2},\\
\displaystyle
\int_{\mathbb{R}^{2}}\rho_{k}({\bf x},{\bf y})d{\bf y}=2,\: \forall k\geq 1,\\
\displaystyle
\lim\limits_{k\rightarrow\infty}\int_{\|{\bf y}-{\bf x}\|_{2}>\varepsilon}\rho_{k}({\bf x},{\bf y})d{\bf y}=0,\: \forall \varepsilon>0.
     \end{array}
     \right.
\end{equation}
Thus the following compactness lemma for the norm $\|\cdot\|_{\delta}$ could be proven, see, e.g.
\cite[Theorem 4]{bourgain2001another}, \cite[Theorems 1.2, 1.3]{Ponce2004An} and \cite[theorem 5.1]{mengesha2012nonlocal}.
\begin{lemma}\label{lemma:Compact_Stand}
\cite{bourgain2001another,mengesha2012nonlocal,Ponce2004An} Assume $\{u_{k}\}\subset L^{1}(D)$ is a bounded sequence such that
\begin{equation*}
\int_{D}\int_{D}{\big|}u_{k}({\bf y})-u_{k}({\bf x}){\big|}^2\gamma_{\delta_{k}}({\bf x},{\bf y})d{\bf y}d{\bf x}\leq C,
\end{equation*}
then $\{u_{k}\}$ is precompact in $L^{2}(D)$. Assume that $u_{k_{j}}\rightarrow u$ in $L^{2}(D)$, then $u\in H^{1}(D)$.
\end{lemma}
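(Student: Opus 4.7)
The plan is to follow the classical Bourgain--Brezis--Mironescu/Ponce strategy and reduce the nonlocal control to a local one via mollification. Fix a smooth radial mollifier $\eta_\varepsilon$ supported in a ball of radius $\varepsilon$, and set $u_k^\varepsilon := u_k \ast \eta_\varepsilon$ after a suitable extension of each $u_k$ to $\mathbb{R}^2$. The uniform $L^1$ bound on $\{u_k\}$ immediately yields uniform $L^\infty$ and Lipschitz bounds on $\{u_k^\varepsilon\}_k$ on compact subsets, so for each fixed $\varepsilon > 0$ this family is precompact in $L^2(D)$ by Arzel\`a--Ascoli. Coupled with a uniform-in-$k$ approximation estimate $\|u_k - u_k^\varepsilon\|_{L^2(D)} \leq \omega(\varepsilon)$ with $\omega(\varepsilon) \to 0$ as $\varepsilon \to 0$, a standard diagonal argument then extracts a subsequence of $\{u_k\}$ converging in $L^2(D)$.

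To establish the uniform approximation estimate, I would write
\begin{equation*}
u_k({\bf x}) - u_k^\varepsilon({\bf x}) = \int \eta_\varepsilon({\bf y}-{\bf x})\bigl(u_k({\bf x}) - u_k({\bf y})\bigr)d{\bf y},
\end{equation*}
apply Cauchy--Schwarz weighted against $\rho_k({\bf x},{\bf y}) = \|{\bf y}-{\bf x}\|_2^2\, \gamma_{\delta_k}({\bf x},{\bf y})$, and integrate in ${\bf x}$ over $D$. The square of the difference is then controlled by
\begin{equation*}
\left(\int \frac{\eta_\varepsilon({\bf y}-{\bf x})^2}{\gamma_{\delta_k}({\bf x},{\bf y})\|{\bf y}-{\bf x}\|_2^2}\,d{\bf y}\right)\!\int |u_k({\bf y})-u_k({\bf x})|^2 \gamma_{\delta_k}({\bf x},{\bf y})\,d{\bf y},
\end{equation*}
and the main task is to bound the first factor uniformly in $k$ by $C\varepsilon^2$ once $\delta_k \leq \varepsilon$; this follows from the structural assumption \cref{General_kernel} on $\gamma_{\delta_k}$ and elementary estimates on $\eta_\varepsilon$. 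Integrating against the hypothesized seminorm bound on $u_k$ and taking square roots closes the approximation argument.

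For the regularity of the limit, suppose $u_{k_j} \to u$ in $L^2(D)$ along a subsequence (and a.e. after a further extraction). For any unit vector ${\bf e}$ and small $h > 0$, Fatou's lemma together with the radial structure of $\rho_k$ and the mass normalization in \cref{rho_cond_Stand} yields the translation estimate
\begin{equation*}
\int_D |u({\bf x} + h{\bf e}) - u({\bf x})|^2 \,d{\bf x} \leq \liminf_{j\to\infty} \int_D |u_{k_j}({\bf x}+h{\bf e}) - u_{k_j}({\bf x})|^2\,d{\bf x} \leq Ch^2,
\end{equation*}
where the last bound comes from averaging the nonlocal seminorm inequality after a change of variable. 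The Riesz characterization of $H^1$ then gives $u \in H^1(D)$ with a quantitative bound on $\|\nabla u\|_{L^2}$.

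The main obstacle is the uniform-in-$k$ mollification estimate, since the kernels $\rho_k$ concentrate at the diagonal as $k \to \infty$ and the pointwise denominator $\gamma_{\delta_k}({\bf x},{\bf y})\|{\bf y}-{\bf x}\|_2^2$ can in principle degenerate. The decisive input is that $\{\rho_k\}$ behaves as an approximation to the identity via \cref{rho_cond_Stand}: the mass normalization and concentration property together allow the mollification scale $\varepsilon$ to be chosen independently of $k$, so that $\omega(\varepsilon)$ is genuinely $k$-independent and the diagonal extraction goes through.
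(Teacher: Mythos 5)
The paper itself does not prove this lemma; the statement is quoted from the literature (note the bracketed citation attached to the lemma and the preceding sentence pointing the reader to \cite[Theorem 4]{bourgain2001another}, \cite[Theorems 1.2, 1.3]{Ponce2004An} and \cite[Theorem 5.1]{mengesha2012nonlocal}), so you are supplying a proof where the paper offers only a citation. Your outer framework --- mollify, establish a $k$-uniform approximation estimate, extract by a diagonal argument, then use a translation bound with Fatou/Riesz for the $H^1$ regularity --- is indeed the Bourgain--Brezis--Mironescu/Ponce strategy.

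However, the inner Cauchy--Schwarz step has a genuine gap. Weighting against $\rho_k = \|{\bf y}-{\bf x}\|_2^2\gamma_{\delta_k}$ produces the first factor
\begin{equation*}
\int \frac{\eta_\varepsilon({\bf y}-{\bf x})^2}{\|{\bf y}-{\bf x}\|_2^2\,\gamma_{\delta_k}({\bf x},{\bf y})}\,d{\bf y},
\end{equation*}
and you claim this is $O(\varepsilon^2)$ once $\delta_k \le \varepsilon$. In fact this integral is $+\infty$ for every integrable kernel in the paper's catalog. Take the constant kernel \cref{const_2d}: then $\gamma_{\delta_k} = \tfrac{4}{\pi}\delta_k^{-4}$ on $B_{\delta_k}({\bf x})$, so with $r = \|{\bf y}-{\bf x}\|_2$ the integrand behaves like $\varepsilon^{-4}\delta_k^4\, r^{-2}$, and $\int_{B_{\delta_k}} r^{-2}\,d{\bf y} = 2\pi\int_0^{\delta_k} r^{-1}\,dr$ diverges logarithmically at $r=0$ in two dimensions. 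The same happens for the linear and Gaussian-like kernels (Types 1.2, 1.3) since they are bounded away from zero near $r=0$. Only the singular kernels of Type 2 with $s>0$ make the first factor finite, and even then it is only $O(1)$ rather than $O(\varepsilon^2)$; the missing $\varepsilon^2$ would have to come from the second factor, which --- if the Cauchy--Schwarz is written correctly with weight $\rho_k$ --- should carry $\rho_k = r^2\gamma_{\delta_k}$ rather than $\gamma_{\delta_k}$ alone (you dropped the $r^2$), giving a $\delta_k^2 \le \varepsilon^2$ gain. But none of this repairs the integrable-kernel case, which is the generic one for the paper's hypotheses \cref{General_kernel}.

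The missing idea, and the actual content of the BBM/Ponce compactness proof, is that a pointwise weighted Cauchy--Schwarz against the kernel cannot work: the nonlocal seminorm only controls translations after averaging over directions (and radii commensurate with the kernel's support), exploiting the radial symmetry of $\rho_k$ and the approximate-identity structure \cref{rho_cond_Stand}. One proves an averaged translation estimate of the shape $\fint_{B_{\delta_k}({\bf 0})}\int_D |u_k({\bf x}+{\bf h})-u_k({\bf x})|^2\,d{\bf x}\,d{\bf h} \lesssim \delta_k^2\,\|u_k\|_{\delta_k}^2$ and feeds it into the Riesz--Fr\'echet--Kolmogorov criterion (equivalently, into the mollification bound $\|u_k - u_k^\varepsilon\|_{L^2}^2 \le \int \eta_\varepsilon({\bf h})\int_D |u_k({\bf x}+{\bf h})-u_k({\bf x})|^2\,d{\bf x}\,d{\bf h}$, obtained by the simpler unweighted Cauchy--Schwarz against $\eta_\varepsilon\,d{\bf h}$). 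So your plan is structurally right, but the workhorse estimate must be replaced by the averaged translation bound rather than the weighted Cauchy--Schwarz you wrote.
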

In addition we need the following compactness lemma which is similar to \cref{lemma:Compact_Stand}, however, the kernel $\gamma_{\delta_{k}}$ is replaced by $\gamma_{\delta_{k},n_{\delta_{k}}}$.
\begin{lemma}\label{lemma:Compact_Modify}
Assume that $\{u_{k}\}\subset L^{1}(D)$ is a bounded sequence and
\begin{equation}\label{General_Cond_2D}
\lim\limits_{k\rightarrow\infty}\int_{\|{\bf y}-{\bf x}\|_{2}\leq\underline{r}(n_{\delta_{k}})}(y_{i}-x_{i})^2\gamma_{\delta_{k}}({\bf x},{\bf y})d{\bf y}=1,\: i=1,2.
\end{equation}
If
\begin{equation}\label{Compact_Modify}
\int_{D}\int_{D}{\big|}u_{k}({\bf y})-u_{k}({\bf x}){\big|}^2\gamma_{\delta_{k},n_{\delta_{k}}}({\bf x},{\bf y})d{\bf y}d{\bf x}\leq C,
\end{equation}
then $\{u_{k}\}$ is precompact in $L^{2}(D)$. Assume that $u_{k_{j}}\rightarrow u$ in $L^{2}(D)$, then $u\in H^{1}(D)$.
\end{lemma}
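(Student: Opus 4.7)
The plan is to reduce to the standard compactness result \cref{lemma:Compact_Stand} by replacing the non-radial, symmetrized kernel $\gamma_{\delta_k,n_{\delta_k}}$ by a radial kernel that is dominated by it in the energy sense, and then invoking the existing Bourgain-Brezis-Mironescu-type machinery already packaged in \cref{lemma:Compact_Stand}.

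First, the leftmost inequality in \cref{equiv_norm} combined with hypothesis \cref{Compact_Modify} immediately yields
\begin{equation*}
\int_D\int_D |u_k({\bf y})-u_k({\bf x})|^2\, \tilde\gamma_k({\bf x},{\bf y})\, d{\bf y}\, d{\bf x} \leq C,
\end{equation*}
where $\tilde\gamma_k({\bf x},{\bf y}) := \gamma_{\delta_k}({\bf x},{\bf y})\,\chi_{\underline{r}(n_{\delta_k})}(\|{\bf y}-{\bf x}\|_2)$ is radial and supported in $B_{\underline{r}(n_{\delta_k})}({\bf x})$. By radial symmetry, the second moments $c_{k,i} := \int_{\mathbb{R}^2} (y_i-x_i)^2 \tilde\gamma_k({\bf x},{\bf y})\, d{\bf y}$ are independent of $i$ and of ${\bf x}$; denote their common value by $c_k$. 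The hypothesis \cref{General_Cond_2D} asserts precisely that $c_k \to 1$, so I rescale and define $\hat\gamma_k := \tilde\gamma_k/c_k$, a radial kernel that is strictly positive on its support and has unit normalized second moments, hence satisfies \cref{General_kernel} (with $\delta_k$ replaced by $\underline{r}(n_{\delta_k})$). Since $\underline{r}(n_{\delta_k}) \leq \delta_k \to 0$, the supports of $\hat\gamma_k$ shrink to the origin, so the sequence $\rho_k({\bf x},{\bf y}) := \|{\bf y}-{\bf x}\|_2^2\, \hat\gamma_k({\bf x},{\bf y})$ satisfies all three conditions of \cref{rho_cond_Stand}: nonnegativity is inherited, $\int \rho_k\, d{\bf y} = 2$ holds by the normalization, and the concentration condition is automatic from the shrinking support.

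Because $c_k\to 1$, the displayed bound gives $\int_D\int_D |u_k({\bf y})-u_k({\bf x})|^2\, \hat\gamma_k({\bf x},{\bf y})\, d{\bf y}\, d{\bf x} \leq 2C$ for $k$ large, so all the hypotheses of \cref{lemma:Compact_Stand} are met with $\gamma_{\delta_k}$ replaced by $\hat\gamma_k$. I then conclude that $\{u_k\}$ is precompact in $L^2(D)$ and any $L^2$-limit of a subsequence lies in $H^1(D)$. The main subtle point I expect to verify carefully is that the Bourgain-Brezis-Mironescu-type argument underlying \cref{lemma:Compact_Stand} is insensitive to replacing its reference kernel by any radial family satisfying \cref{kernel_finite} and \cref{General_kernel} with supports shrinking to zero; the essential technical role of \cref{General_Cond_2D} is precisely to guarantee that the truncation from $B_{\delta_k}({\bf x})$ down to the inscribed ball $B_{\underline{r}(n_{\delta_k})}({\bf x})$ preserves the second-moment normalization in the limit, without which the rescaling step $c_k\to 1$ would fail and compactness could break down.
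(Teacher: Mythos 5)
Your proposal is correct and reaches the same conclusion, but by a genuinely different reduction than the one in the paper. The paper also starts from the truncated kernel $\gamma_{\delta_k}({\bf x},{\bf y})\chi_{\underline{r}(n_{\delta_k})}(\|{\bf y}-{\bf x}\|_2)$ and the leftmost inequality in \cref{equiv_norm}; but instead of rescaling to restore the exact second-moment normalization, the paper \emph{relaxes} the hypotheses on $\rho_k$: the exact identity $\int_{\mathbb{R}^2}\rho_k\,d{\bf y}=2$ in \cref{rho_cond_Stand} is replaced by the limiting version $\lim_{k\to\infty}\int_{\mathbb{R}^2}\rho_k\,d{\bf y}=2$ (these are the conditions recorded as \cref{rho_cond_Modify}). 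The paper then argues, by inspecting the proofs in \cite{bourgain2001another,mengesha2012nonlocal,Ponce2004An}, that the essential ingredient used there is merely $\int_{\|{\bf y}-{\bf x}\|_2<\varepsilon}\rho_k\,d{\bf y}\geq 1/2$ for $k$ large enough, which the relaxed conditions still guarantee; compactness in the norm $\|\cdot\|_{\underline{r}(n_{\delta_k})}$ is thereby established and then transferred via \cref{equiv_norm}.

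Your route rescales: because $c_k\to 1$ (and $0<c_k\leq 1$ by \cref{Subset} and \cref{comp_supp}, so the division is legitimate and the bound $C/c_k\leq 2C$ holds eventually), the kernel $\hat\gamma_k=\tilde\gamma_k/c_k$ satisfies \cref{kernel_finite} and \cref{General_kernel} verbatim with support radius $\underline{r}(n_{\delta_k})$, and you can apply \cref{lemma:Compact_Stand} as a black box. This is cleaner in the sense that you never re-open the BBM-type proof. The trade-off is that the paper's inspection isolates a genuinely useful fact — the compactness machinery tolerates a limiting normalization rather than an exact one — which the normalization trick conceals. Both arguments are sound; yours is arguably more economical, the paper's more informative.
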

\begin{proof}
Let $\rho_{k}({\bf x},{\bf y})=\|{\bf y}-{\bf x}\|_{2}^2\gamma_{\delta_{k}}({\bf x},{\bf y})\chi_{\underline{r}(n_{\delta_{k}})}(\|{\bf y}-{\bf x}\|_{2})$,
by the condition \cref{General_Cond_2D}, \cref{kernel_finite} and \cref{General_kernel}, we know that $\{\rho_{k}\}$ is a sequence of radial functions and for all ${\bf x}\in\Omega$,
\begin{equation}\label{rho_cond_Modify}
     \left \{
     \begin{array}{l}
\rho_{k}({\bf x},{\bf y})\geq 0\:\: \mbox{in} \:\:\mathbb{R}^{2},\\
\displaystyle \vspace{0.1cm}
\lim\limits_{k\rightarrow\infty}\int_{\mathbb{R}^{2}}\rho_{k}({\bf x},{\bf y})d{\bf y}=2,\\
\displaystyle
\lim\limits_{k\rightarrow\infty}\int_{\|{\bf y}-{\bf x}\|_{2}>\varepsilon}\rho_{k}({\bf x},{\bf y})d{\bf y}=0,\: \forall \varepsilon>0.
     \end{array}
     \right.
\end{equation}
The difference between \cref{rho_cond_Modify} and \cref{rho_cond_Stand} is: the second condition of \cref{rho_cond_Modify} takes limit. So we could use similar argument with that in \cite{bourgain2001another,mengesha2012nonlocal,Ponce2004An} to prove the compactness result if \cref{Compact_Modify} is replaced by
\begin{equation*}
\|u_{k} \|_{\underline{r}(n_{\delta_{k}})}^{2}=\int_{D}\int_{D}{\big|}u_{k}({\bf y})-u_{k}({\bf x}){\big|}^2\gamma_{\delta_{k}}({\bf x},{\bf y})\chi_{\underline{r}(n_{\delta_{k}})}(\|{\bf y}-{\bf x}\|_{2})d{\bf y}d{\bf x}\leq C.
\end{equation*}
For example, from the proof of \cite[Theorems 1.2]{Ponce2004An} we know that the essential requirement for $\rho_{k}({\bf x},{\bf y})$ is: for any ${\bf x}\in\Omega$ and fixed $\varepsilon>0$, there is a $k_{0}$ such that
\begin{equation*}
\int_{\|{\bf y}-{\bf x}\|_{2}<\varepsilon}\rho_{k}({\bf x},{\bf y})d{\bf y}\geq 1/2,\: \forall k\geq k_{0},
\end{equation*}
which is assured by the condition \cref{rho_cond_Modify}. Thus we establish the compactness result for the norm $\|\cdot\|_{\underline{r}(n_{\delta_{k}})}$. Then by the relation \cref{equiv_norm} we complete the proof.
\end{proof}

\begin{lemma}\label{lemma:Poincare}
(uniform Poincar\'{e}-type inequality). Assume that
\begin{equation*}
\lim\limits_{k\rightarrow\infty}\int_{\|{\bf y}-{\bf x}\|_{2}\leq\underline{r}(n_{\delta})}|y_{i}-x_{i}|^2\gamma_{\delta}({\bf x},{\bf y})d{\bf y}=1,\: i=1,2.
\end{equation*}
Then there exist $\delta_{0}>0$ and $C>0$ independent of $\delta$
such that $\forall \delta<\delta_{0}$,
\begin{equation}\label{Poincare_low}
\|u 
\|_{0,\Omega}\leq C\|u\|_{\underline{r}(n_{\delta})},
\end{equation}
and thus by \cref{equiv_norm}
\begin{equation}\label{Poincare}
\|u\|_{0,\Omega}\leq C\|u
\|_{\delta,n_{\delta}}.
\end{equation}
\end{lemma}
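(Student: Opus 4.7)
The plan is to proceed by contradiction, in the spirit of the standard compactness argument behind nonlocal Poincaré inequalities (as in Bourgain--Brezis--Mironescu and its adaptations cited in \cref{lemma:Compact_Stand}--\cref{lemma:Compact_Modify}). Suppose \cref{Poincare_low} fails. Then there exist a sequence $\delta_k\to 0$ and functions $u_k\in V(\widehat{\Omega}_{\delta_k})$, which we normalize so that $\|u_k\|_{0,\Omega}=1$, with $\|u_k\|_{\underline{r}(n_{\delta_k})}\to 0$. Because each $u_k$ vanishes outside $\Omega$ and $D$ is bounded, $\{u_k\}$ is automatically bounded in $L^2(D)$ and hence in $L^1(D)$.

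Next, the moment hypothesis assumed in the lemma is precisely the condition \cref{General_Cond_2D} required to apply \cref{lemma:Compact_Modify} to the truncated kernel $\gamma_{\delta_k}\chi_{\underline{r}(n_{\delta_k})}$; indeed, that lemma's proof already works with the norm $\|\cdot\|_{\underline{r}(n_{\delta_k})}$. Since the hypothesized bound $\|u_k\|_{\underline{r}(n_{\delta_k})}\to 0$ certainly gives a uniform bound on this seminorm, passing to a subsequence we obtain $u_k\to u$ in $L^2(D)$ with $u\in H^1(D)$. The $L^2$-convergence preserves the normalization $\|u\|_{0,\Omega}=1$, and since $u_k\equiv 0$ on $D\setminus\Omega$ for all sufficiently large $k$, the limit satisfies $u=0$ a.e.\ on $D\setminus\Omega$, so (after restriction) $u\in H^1_0(\Omega)$.

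It then remains to deduce that $\nabla u=0$ a.e.\ in $\Omega$. This is where I would invoke the lower semicontinuity counterpart of the pointwise convergence proven in the appendix: for sequences $u_k\to u$ in $L^2$ with $u\in H^1_0(\Omega)$ (extended by zero), the Bourgain--Brezis--Mironescu/Ponce-type estimate combined with the moment normalization \cref{General_Cond_2D} yields
\begin{equation*}
\int_\Omega |\nabla u|^2 \, d{\bf x} \;\le\; \liminf_{k\to\infty}\|u_k\|_{\underline{r}(n_{\delta_k})}^2 \;=\; 0.
\end{equation*}
Combined with $u\in H^1_0(\Omega)$ and the classical Poincaré inequality on $\Omega$, this forces $u\equiv 0$, contradicting $\|u\|_{0,\Omega}=1$. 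The main obstacle is verifying this lower semicontinuity for the truncated radial kernel (not just the pointwise $\liminf$ at a fixed smooth $u$); it should follow from the same appendix argument by testing against approximating smooth functions and extracting the correct moments, but is the only nontrivial step—the rest is bookkeeping.

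Finally, \cref{Poincare} is an immediate consequence of \cref{Poincare_low} together with the middle inequality in \cref{equiv_norm}, so no additional work is required.
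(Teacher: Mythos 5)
Your proof is correct and follows exactly the compactness-plus-contradiction route that the paper itself gestures at but does not spell out: the paper's one-paragraph remark after the lemma statement simply cites \cite[Lemma 3.2]{tian2014asymptotically} and \cite[Proposition 5.3]{mengesha2014bond} and observes that the key ingredient is the compactness result \cref{lemma:Compact_Modify}, which is precisely what you do. The lower-semicontinuity step that you flag as ``the only nontrivial step'' is indeed needed, but it is a standard part of the Bourgain--Brezis--Mironescu framework (cf.~\cite[Theorem 1.3]{Ponce2004An}, which is already cited in \cref{lemma:Compact_Stand} and whose hypotheses are met here after the truncation because of \cref{rho_cond_Modify}); so there is no actual gap, and your argument supplies the details the paper elides.
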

The inequality \cref{Poincare_low} is similar to \cite[Lemma 3.2]{tian2014asymptotically}, which is a special case of \cite[Proposition 5.3]{mengesha2014bond} for scalar valued functions. To prove \cref{lemma:Poincare}, we just need to prove \cref{Poincare_low}. From \cite{mengesha2014bond} we know that the key point is the compactness result, i.e. \cref{lemma:Compact_Modify} which has been proven.

\begin{theorem}\label{thm:Conv_L2}
Assume ${\big\{} B_{\delta_{k},n_{\delta_{k},{\bf x}}}{\big\}}$ is a family of polygons that satisfy \cref{Subset} and
\begin{equation}\label{under_del}
\lim\limits_{k\rightarrow\infty}\frac{\underline{r}(n_{\delta_{k}})}{\delta_{k}}=1.
\end{equation}
If $f_{\delta_{k}} \rightarrow f_{0}$ in $L^{2}$ sense, the kernels $\{\gamma_{\delta_{k}}\}$ satisfy \cref{kernel_finite} and \cref{General_kernel}, we have
\begin{equation*}
\lim\limits_{k\rightarrow\infty}{\big\|}u_{\delta_{k},n_{\delta_{k}}}-u_{0}{\big\|}_{0,\Omega}=0.
\end{equation*}
\end{theorem}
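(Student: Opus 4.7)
The plan is to adapt the strategy of Theorem 2.5 in \cite{tian2014asymptotically} to the modified operator $\mathcal{L}_{\delta,n_{\delta}}$, leveraging the three preparatory results established just above: the local-nonlocal Green's formula (\cref{lemma:Green_Nonlocal}), the compactness lemma for the modified kernel (\cref{lemma:Compact_Modify}), and the uniform Poincar\'e inequality (\cref{lemma:Poincare}). First I would verify that the standing assumption \cref{under_del} together with \cref{kernel_finite}--\cref{rescaled_kernel} implies the moment condition \cref{General_Cond_2D} required by \cref{lemma:Compact_Modify} and \cref{lemma:Poincare}: after rescaling via \cref{rescaled_kernel}, the integral $\int_{\|\mathbf{y}-\mathbf{x}\|_{2}\leq \underline{r}(n_{\delta_k})}(y_i-x_i)^2\gamma_{\delta_k}(\mathbf{x},\mathbf{y})d\mathbf{y}$ becomes $\int_{B_{\underline{r}(n_{\delta_k})/\delta_k}(\mathbf{0})}\xi_i^2\gamma(\|\boldsymbol{\xi}\|_2)d\boldsymbol{\xi}$, and since $\underline{r}(n_{\delta_k})/\delta_k\to 1$, dominated convergence gives the required limit $1$.

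Next, testing the weak form of \cref{nonlocal_approx} against $u_{\delta_k,n_{\delta_k}}$ itself yields the energy identity $\|u_{\delta_k,n_{\delta_k}}\|_{\delta_k,n_{\delta_k}}^2=(f_{\delta_k},u_{\delta_k,n_{\delta_k}})_{L^2(\Omega)}$, and combined with \cref{Poincare} and the $L^2$-boundedness of $\{f_{\delta_k}\}$, this gives a uniform bound on both $\|u_{\delta_k,n_{\delta_k}}\|_{L^2(\Omega)}$ and $\|u_{\delta_k,n_{\delta_k}}\|_{\delta_k,n_{\delta_k}}$. I would then extend each $u_{\delta_k,n_{\delta_k}}$ by zero on $\Omega_{\delta_k}^c$ and apply \cref{lemma:Compact_Modify} to extract a subsequence (not relabeled) converging in $L^2(D)$ to some $u^*\in H^1(D)$; the homogeneous boundary values force $u^*\in H^1_0(\Omega)$.

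The decisive step is identifying $u^*=u_0$ by passing to the limit in the weak formulation tested against $\varphi\in\mathcal{D}(\Omega)$. Using \cref{Green_Nonlocal} I rewrite $\int_\Omega \varphi \mathcal{L}_{\delta_k,n_{\delta_k}}u_{\delta_k,n_{\delta_k}}\,d\mathbf{x}=\int_\Omega u_{\delta_k,n_{\delta_k}}\sum_{i=1}^2\sigma_{n_{\delta_k},i}(\mathbf{x})\partial_{ii}^2\varphi(\mathbf{x})\,d\mathbf{x}+\mathcal{O}(\delta_k^2)|\varphi|_{4,\infty}\|u_{\delta_k,n_{\delta_k}}\|_{L^1}$. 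The remainder vanishes, so it remains to show $\sigma_{n_{\delta_k},i}(\mathbf{x})\to 1$ uniformly in $\mathbf{x}\in\Omega$. By the definition \cref{kernel} and the sandwich $B_{\underline{r}(n_{\delta_k})}(\mathbf{x})\subset B_{\delta_k,n_{\delta_k,\mathbf{x}}}\subset B_{\delta_k}(\mathbf{x})$, together with the symmetrized form of $\gamma_{\delta_k,n_{\delta_k}}$, one obtains
\begin{equation*}
\int_{B_{\underline{r}(n_{\delta_k})}(\mathbf{x})}(y_i-x_i)^2\gamma_{\delta_k}(\mathbf{x},\mathbf{y})\,d\mathbf{y}\leq \sigma_{n_{\delta_k},i}(\mathbf{x})\leq \int_{B_{\delta_k}(\mathbf{x})}(y_i-x_i)^2\gamma_{\delta_k}(\mathbf{x},\mathbf{y})\,d\mathbf{y}=1,
\end{equation*}
so \cref{under_del} squeezes $\sigma_{n_{\delta_k},i}\to 1$. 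The limit of the test identity therefore becomes $\int_\Omega u^*\Delta\varphi\,d\mathbf{x}=-\int_\Omega f_0\varphi\,d\mathbf{x}$, identifying $u^*$ with $u_0$ via \cref{local_weak} and the uniqueness of solutions to the local problem \cref{local_diffusion}. A standard subsequence-of-subsequence argument then upgrades the convergence to the full sequence.

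The main obstacle I anticipate is the uniform handling of the $\sigma_{n_{\delta_k},i}$ factor inside the integral against $\partial_{ii}^2\varphi$: because $\sigma$ depends on $\mathbf{x}$ through the polygons $B_{\delta_k,n_{\delta_k,\mathbf{x}}}$, one must ensure the lower Riemann sum is uniform in $\mathbf{x}$, which requires $\underline{r}(n_{\delta_k})$ (an infimum over $\Omega$) rather than a pointwise quantity—this is precisely why the hypothesis is phrased with $\underline{r}$ and why the kernel normalization in \cref{General_kernel} was imposed. A secondary subtlety is the interplay between the $L^2$-convergence of the extended $u_{\delta_k,n_{\delta_k}}$ and the weak convergence against the test function $\sum_i \sigma_{n_{\delta_k},i}\partial_{ii}^2\varphi$; since $\sigma_{n_{\delta_k},i}$ only converges pointwise (a.e.) to $1$ and is uniformly bounded by $1$, dominated convergence handles this cleanly.
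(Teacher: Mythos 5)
Your proposal is correct and follows essentially the same route as the paper's proof: verify that \cref{under_del} implies \cref{General_Cond_2D}, obtain a uniform energy bound via the Poincar\'e inequality, extract an $L^2$-convergent subsequence by \cref{lemma:Compact_Modify}, and pass to the limit in the local-nonlocal Green's formula using the squeeze bound on $\sigma_{n_{\delta_k},i}$. The only cosmetic addition is the explicit subsequence-of-subsequences step and the explicit remark on dominated convergence, both of which the paper leaves implicit.
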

\begin{proof}
By \cref{under_del}, \cref{kernel_finite} and \cref{General_kernel}, we know that
\begin{equation}\label{General_Cond_2D_new}
\lim_{k\rightarrow\infty}\int_{\|{\bf z}\|_{2}<\underline{r}(n_{\delta_{k}})}z_{i}^2\widetilde{\gamma}_{\delta_{k}}(\|{\bf z}\|_{2})d{\bf z}=\lim_{k\rightarrow\infty}\int_{\|{\bf z}\|_{2}<\delta_{k}}z_{i}^2\widetilde{\gamma}_{\delta_{k}}(\|{\bf z}\|_{2})d{\bf z}=1,\: i=1,2.
\end{equation}
That is the condition \cref{General_Cond_2D} holds.

Take $\delta=\delta_{k}$ in \cref{nonlocal_approx} and multiply $u_{\delta_{k},n_{\delta_{k}}}$ on both sides, we get
\begin{equation*}
{\big\|}u_{\delta_{k},n_{\delta_{k}}}{\big\|}_{\delta_{k},n_{\delta_{k}}}^{2}=-\int_{\Omega}u_{\delta_{k},n_{\delta_{k}}}\mathcal{L}_{\delta_{k},n_{\delta_{k}}}u_{\delta_{k},n_{\delta_{k}}}d{\bf x}
=\int_{\Omega}f_{\delta_{k}}u_{\delta_{k},n_{\delta_{k}}}d{\bf x}.
\end{equation*}
Then by the Cauchy-Schwarz inequality and nonlocal Poincar\'{e}-type inequality \cref{Poincare} in \cref{lemma:Poincare},
\begin{align*}
{\big\|}u_{\delta_{k},n_{\delta_{k}}}{\big\|}_{\delta_{k},n_{\delta_{k}}}^{2}
\leq
{\big\|}f_{\delta_{k}}{\big\|}_{0,\Omega}{\big\|}u_{\delta_{k},n_{\delta_{k}}}{\big\|}_{0,\Omega}\leq C{\big\|}f_{\delta_{k}}{\big\|}_{0,\Omega}{\big\|}u_{\delta_{k},n_{\delta_{k}}}{\big\|}_{\delta_{k},n_{\delta_{k}}}.
\end{align*}
Thus ${\big\|}u_{\delta_{k},n_{\delta_{k}}}{\big\|}_{\delta_{k},n_{\delta_{k}}}\leq C{\big\|}f_{\delta_{k}}{\big\|}_{0,\Omega}$ holds.
This, together with $f_{\delta_{k}} \rightarrow f_{0}$, leads to the following uniform boundedness result
\begin{equation}\label{uniformB}
{\big\|}u_{\delta_{k},n_{\delta_{k}}}{\big\|}_{\delta_{k},n_{\delta_{k}}}\leq C'\|f_{0}\|_{0,\Omega},
\end{equation}
where $C'>0$ is a constant independent of $\delta_k$.
Then by \cref{lemma:Compact_Modify}, we get the convergence of a subsequence of ${\big\{}u_{\delta_{k},n_{\delta_{k}}}{\big\}}$ in $L^{2}$
to a limit point $u'_{0}\in H^{1}_{0}(\Omega)$. For brevity, the same symbol is used to denote the subsequence.

On the other hand, from \cref{Green_Nonlocal}, we know that for all $\varphi\in \mathcal{D}(\Omega)$,
\begin{align*}
\int_{\Omega}\varphi({\bf x})\mathcal{L}_{\delta_{k},n_{\delta_{k}}}u_{\delta_{k},n_{\delta_{k}}}({\bf x})d{\bf x}&
=\int_{\Omega}u_{\delta_{k},n_{\delta_{k}}}({\bf x})\left(\sum_{i=1}^{2}\sigma_{n_{\delta_{k}},i}({\bf x})\partial_{ii}^{2}\varphi({\bf x})\right)d{\bf x}\\
&+\mathcal{O}{\big(}\delta_{k}^2{\big)}{\big|}\varphi{\big|}_{4,\infty}{\big\|}u_{\delta_{k},n_{\delta_{k}}}{\big\|}_{L^1(\Omega)}.
\end{align*}
Thus by the uniform boundedness result \cref{uniformB} and the uniform Poincar\'{e}-type inequality \cref{Poincare} in \cref{lemma:Poincare}, the following formula holds for all $\varphi\in \mathcal{D}(\Omega)$,
\begin{equation}\label{delta_form}
\int_{\Omega}u_{\delta_{k},n_{\delta_{k}}}({\bf x}){\bigg(}\sum_{i=1}^{2}\sigma_{n_{\delta_{k}},i}({\bf x})\partial_{ii}^{2}\varphi({\bf x}){\bigg)}d{\bf x}+\mathcal{O}{\big(}\delta_{k}^2{\big)}=-\int_{\Omega}f_{\delta_{k}}({\bf x})\varphi({\bf x})d{\bf x}.
\end{equation}
By the condition \cref{Subset} and the definition of $\underline{r}(n_{\delta_{k}})$, we know that for all ${\bf x}\in\Omega$
\begin{equation*}
\int_{\|{\bf z}\|_{2}<\underline{r}(n_{\delta_{k}})}z_{i}^2\widetilde{\gamma}_{\delta_{k}}\left(\|{\bf z}\|_{2}\right)d{\bf z}\leq \sigma_{n_{\delta_{k}},i}({\bf x})\leq\int_{\|{\bf z}\|_{2}<\delta_{k}}z_{i}^2\widetilde{\gamma}_{\delta_{k}}\left(\|{\bf z}\|_{2}\right)d{\bf z},\: i=1,2,
\end{equation*}
then by \cref{General_Cond_2D_new} and the squeeze theorem, the following limit result holds
\begin{equation}\label{sigma_lim}
\lim_{k\rightarrow\infty}\sigma_{n_{\delta_{k}},i}({\bf x})=1,\: i=1,2,\: \forall {\bf x}\in\Omega.
\end{equation}
Taking limit in \cref{delta_form}, it is shown that $u_{0}'$ satisfies \cref{local_weak},
the proof is complete.
\end{proof}

We note from the proof that, in the above theorem, the condition on  the
$L^2$ convergence of $f_{\delta_{k}}$ to $ f_{0}$ can be relaxed, we refer to for related discussions in \cite{tian2020asymptotically}. Meanwhile, the condition on the polygonal approximations to the interaction neighborhood for \cref{thm:Conv_L2} is stated as a very general condition. In this paper, we also establish two other theorems which impose more specialized conditions. However, they  remain applicable in many circumstances.
\begin{theorem}\label{thm:Conv_L2_Area}
Assume that ${\big\{} B_{\delta_{k},n_{\delta_{k},{\bf x}}}{\big\}}$ is a family of convex polygons that satisfy \cref{Subset} and
\begin{equation}\label{Area_cond}
\inf_{{\bf x}\in\Omega}\frac{{\big|}B_{\delta_{k},n_{\delta_{k},{\bf x}}}{\big|}}{{\big|}B_{\delta_{k}}({\bf x}){\big|}}\rightarrow 1\:\: \mbox{as}\:\: k\rightarrow\infty.
\end{equation}
If $f_{\delta_{k}} \rightarrow f_{0}$ in $L^{2}$ sense, the kernels  $\{\gamma_{\delta_{k}}\}$ satisfy \cref{kernel_finite} and \cref{General_kernel}, the same conclusion as \cref{thm:Conv_L2} holds.
\end{theorem}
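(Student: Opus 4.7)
The plan is to reduce Theorem \ref{thm:Conv_L2_Area} to Theorem \ref{thm:Conv_L2} by showing that for a family of convex polygons inscribed in Euclidean balls, the area condition \cref{Area_cond} implies the inradius condition \cref{under_del}. Once this geometric implication is established, the conclusion follows immediately from Theorem \ref{thm:Conv_L2}.

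The key observation is a one-sided comparison between the polygon area and its inradius at the center. Fix any ${\bf x}\in\Omega$ and write $r=r_{\delta_{k},{\bf x}}$ for the radius of the largest inscribed ball centered at ${\bf x}$. Since $B_{\delta_{k},n_{\delta_{k},{\bf x}}}$ is convex and contains a neighborhood of ${\bf x}$, the minimum distance from ${\bf x}$ to $\partial B_{\delta_{k},n_{\delta_{k},{\bf x}}}$ is attained at some edge $e$ of the polygon at distance exactly $r$ from ${\bf x}$. Let $H$ be the closed half-plane bounded by the line through $e$ and containing ${\bf x}$. By convexity the polygon lies in $H$, so
\begin{equation*}
B_{\delta_{k},n_{\delta_{k},{\bf x}}}\subseteq B_{\delta_{k}}({\bf x})\cap H.
\end{equation*}
The set $B_{\delta_{k}}({\bf x})\setminus H$ is a circular segment of $B_{\delta_{k}}({\bf x})$ whose chord is at distance $r$ from ${\bf x}$, with area $\delta_{k}^{2}\arccos(r/\delta_{k})-r\sqrt{\delta_{k}^{2}-r^{2}}$.

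Setting $s=r_{\delta_{k},{\bf x}}/\delta_{k}\in[0,1]$ and $g(s)=\arccos(s)-s\sqrt{1-s^{2}}$, the bound above gives
\begin{equation*}
1-\frac{{\big|}B_{\delta_{k},n_{\delta_{k},{\bf x}}}{\big|}}{{\big|}B_{\delta_{k}}({\bf x}){\big|}}\geq \frac{1}{\pi}\,g(s).
\end{equation*}
Since $g$ is continuous and strictly decreasing on $[0,1]$ with $g(1)=0$, taking infimum over ${\bf x}\in\Omega$ and applying \cref{Area_cond} forces $s=r_{\delta_{k},{\bf x}}/\delta_{k}\to 1$ uniformly in ${\bf x}$, hence $\underline{r}(n_{\delta_{k}})/\delta_{k}\to 1$ as $k\to\infty$, which is precisely \cref{under_del}.

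All the remaining hypotheses of Theorem \ref{thm:Conv_L2} are assumed in the statement, so the conclusion ${\big\|}u_{\delta_{k},n_{\delta_{k}}}-u_{0}{\big\|}_{0,\Omega}\to 0$ follows directly by invoking that theorem. The main (and essentially only) obstacle is the geometric step: identifying the correct supporting half-plane and using the explicit circular-segment area to turn an area deficit into an inradius deficit. Convexity is crucial here, as it is what allows the single-edge argument; without it one would have to contend with polygons whose inradius at ${\bf x}$ is much smaller than the typical distance from ${\bf x}$ to the boundary of the polygon, and the implication from area to inradius would fail.
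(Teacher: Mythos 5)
Your proof is correct and follows essentially the same route as the paper: both reduce to \cref{thm:Conv_L2} by showing that the area condition \cref{Area_cond} together with convexity forces the inradius condition \cref{under_del}, via the observation that the polygon lies on one side of the line through the edge realizing the inradius, so the resulting circular segment (of area $\delta_k^{2}\arccos(r/\delta_k)-r\sqrt{\delta_k^2-r^2}$) is missing from the polygon. The paper phrases this as a proof by contradiction and you argue directly with the monotonicity of $g$, but the geometric content is identical.
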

\begin{proof}
According to \cref{thm:Conv_L2}, we need to show \cref{under_del} by \cref{Area_cond} and the `convexity condition'. We prove this by contradiction. If \cref{under_del} does not hold, then there exists a constant $\varepsilon_{0}\in (0,1/2)$ such that for all $k$, $\underline{r}(n_{\delta_{k}})/\delta_{k}\leq 1-2\varepsilon_{0}$. From the definition of $\underline{r}(n_{\delta_{k}})$, for any $k$ there exists a point ${\bf x}_{0}\in\Omega$ such that $r_{\delta_{k},{\bf x}_{0}}/\delta_{k}\leq 1-\varepsilon_{0}$.
Assume that the inscribed ball of $B_{\delta_{k},n_{\delta_{k},{\bf x}_{0}}}$ is tangent to the polygon at point $P$ which is on the side $\overline{A_{1}A_{2}}$. The side $\overline{A_{1}A_{2}}$ must be a part of a chord, which is denoted by $\overline{A_{3}A_{4}}$, see (a) of \cref{Fig:Area_cond}. Denote the cap corresponding to the chord $\overline{A_{3}A_{4}}$ by $C_{\rm grey}$, which is the cap with grey color in (a) of \cref{Fig:Area_cond}. Since the polygon $B_{\delta_{k},n_{\delta_{k},{\bf x}_{0}}}$ is convex,
$C_{\rm grey}\subset B_{\delta_{k}}({\bf x}_{0})\setminus B_{\delta_{k},n_{\delta_{k},{\bf x}_{0}}}$. The ball, polygon and cap are mapped to their counterparts in $B_{1}({\bf 0})$ by the mapping \cref{Affine_map}, see (b) of \cref{Fig:Area_cond} where the image of $C_{\rm grey}$ is denoted by $\widehat{C}_{\rm grey}$. Thus,
\begin{equation}\label{conf_rela}
\widehat{C}_{\rm grey}\subset B_{1}({\bf 0})\setminus B_{1,n_{\delta_{k},{\bf x}_{0}}}({\bf 0}).
\end{equation}
\begin{figure}[tbhp]
\vspace{-0.5cm}
\centering
\subfloat[convex polygon in $B_{\delta_{k}}({\bf x}_{0})$]{
\includegraphics[width=4cm]{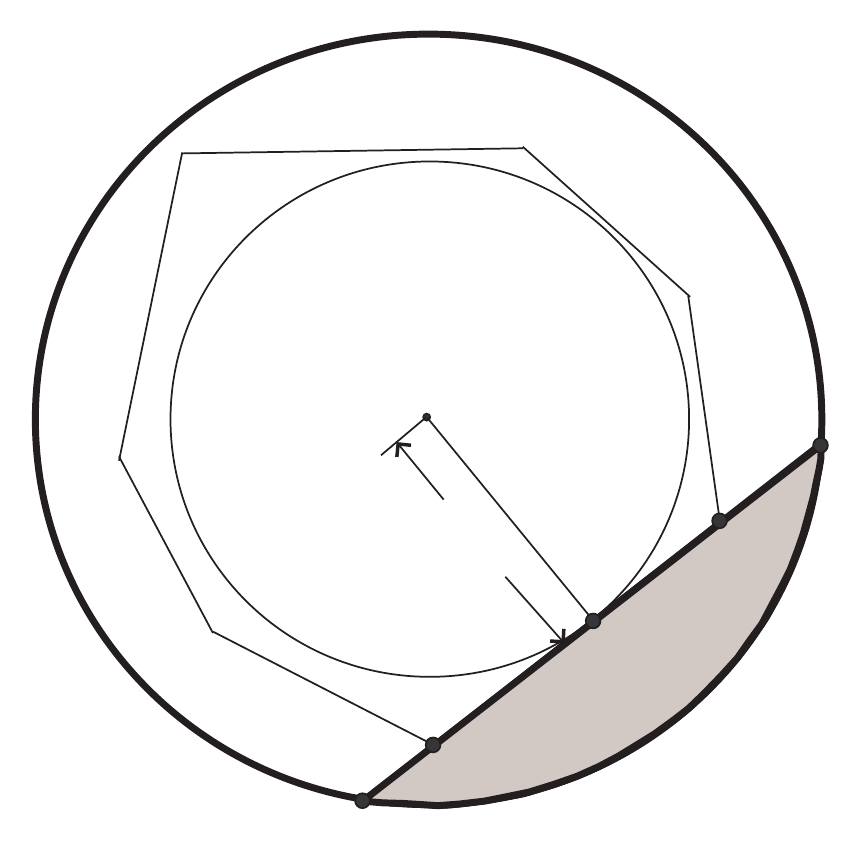}
\put(-65,60){${\bf x}_{0}$}\put(-67,39){$r_{\delta_{k},{\bf x}_{0}}$}
\put(-57,8){$A_{1}$}\put(-23,34){$A_{2}$}\put(-36,22){$P$}
\put(-72,-3){$A_{3}$}\put(-5,50){$A_{4}$}}\hspace{1cm}
\subfloat[convex polygon in $B_{1}({\bf 0})$]{
\includegraphics[width=5cm]{Convex_Poly_Contradiction}
\put(-79,73){$O$}\put(-95,48){$r_{\delta_{k},{\bf x}_{0}}/\delta_{k}$}
\put(-70,9){$\widehat{A}_{1}$}
\put(-28,41){$\widehat{A}_{2}$}
\put(-47,25){$\widehat{P}$}
\put(-91,-5){$\widehat{A}_{3}$}\put(-5,63){$\widehat{A}_{4}$} }
\caption{Convex polygon in the ball $B_{\delta_{k}}({\bf x}_{0})$ and its image by the mapping \cref{Affine_map}}\label{Fig:Area_cond}
\end{figure}
By simple calculation, we have
\begin{equation*}
{\Big|}\widehat{C}_{\rm grey}{\Big|}=\theta_{n_{\delta_{k},{\bf x}_{0}}}-\sin\left(\theta_{n_{\delta_{k},{\bf x}_{0}}}\right),
\end{equation*}
with
\begin{equation*}
\theta_{n_{\delta_{k},{\bf x}_{0}}}=2\arccos\left(r_{\delta_{k},{\bf x}_{0}}/\delta_{k}\right)\geq 2\arccos(1-\varepsilon_{0}),
\end{equation*}
which leads to
\begin{equation}\label{conf_area}
{\Big|}\widehat{C}_{\rm grey}{\Big|}\geq 2\arccos(1-\varepsilon_{0})-\sin\left(2\arccos(1-\varepsilon_{0})\right)>0.
\end{equation}
By \cref{conf_rela} and \cref{conf_area} we know that for any $k$ there exists a point ${\bf x}_{0}\in\Omega$ such that
\begin{equation*}
\frac{{\big|}B_{\delta_{k},n_{\delta_{k},{\bf x}_{0}}}{\big|}}{{\big|}B_{\delta_{k}}({\bf x}_{0}){\big|}}=\frac{{\big|}B_{1,n_{\delta_{k},{\bf x}_{0}}}{\big|}}{{\big|}B_{1}({\bf 0}){\big|}}\leq 1-\widehat{C}_{\rm grey}\leq 1-2\arccos(1-\varepsilon_{0})+\sin\left(2\arccos(1-\varepsilon_{0})\right)<1
\end{equation*}
which contradicts the condition \cref{Area_cond}.
\end{proof}
\subsubsection{Local limit as $n_\delta\to \infty$ for quasi-uniform inscribed polygons}\label{subsec:ndel_inf}
If the family of polygons possesses both `inscribed' and `quasi-uniform' properties, the condition \cref{Area_cond} could be further simplified.
\begin{definition}
A family of polygons $\{B_{\delta,n_{\delta,{\bf x}}}\}$ is called quasi-uniform if there exist two constants $C_{1}$ and $C_{2}>0$ such that $\forall\delta>0$, the following two bounds hold
\begin{equation*}
\sup_{{\bf x}\in\Omega}\frac{H^{\rm max}_{n_{\delta,{\bf x}}}}{H^{\rm min}_{n_{\delta,{\bf x}}}}\leq C_{1},
\end{equation*}
where $H^{\rm max}_{n_{\delta,{\bf x}}}$ and $H^{\rm min}_{n_{\delta,{\bf x}}}$ stand for the lengths of the longest and shortest sides of the polygon $B_{\delta,n_{\delta,{\bf x}}}$, respectively,
and
\begin{equation*}
\sup_{{\bf x}\in\Omega}\frac{\delta}{r_{\delta,{\bf x}}}\leq C_{2}.
\end{equation*}
\end{definition}
Then for a quasi-uniform family of polygons, there exists a constant $C>0$ such that for all $\delta>0$, $n_{\delta}\leq C n_{\delta,\inf}$ holds.
\begin{theorem}\label{thm:Conv_L2_Sides}
Assume that ${\big\{} B_{\delta_{k},n_{\delta_{k},{\bf x}}}{\big\}}$ is a quasi-uniform family of inscribed polygons. The kernels $\gamma_{\delta_{k}}$ satisfy \cref{kernel_finite} and \cref{General_kernel}.
If $f_{\delta_{k}} \rightarrow f_{0}$ in $L^{2}$, and
\begin{equation}\label{sides_cond}
\lim\limits_{k\rightarrow\infty}n_{\delta_{k}}=\infty,
\end{equation}
then we have $\lim\limits_{k\rightarrow\infty}{\big\|}u_{\delta_{k},n_{\delta_{k}}}-u_{0}{\big\|}_{0,\Omega}=0$.
\end{theorem}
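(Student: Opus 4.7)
The plan is to reduce \cref{thm:Conv_L2_Sides} to \cref{thm:Conv_L2}: inscribed polygons are automatically convex and satisfy \cref{Subset}, while the hypotheses on $f_{\delta_k}$ and on $\{\gamma_{\delta_k}\}$ are shared. Hence the only nontrivial task is to verify the ratio condition \cref{under_del},
\begin{equation*}
\lim_{k\to\infty}\frac{\underline{r}(n_{\delta_k})}{\delta_k}=1,
\end{equation*}
for which the upper bound $\leq 1$ is immediate from \cref{Subset}, so only a matching lower bound tending to $1$ is required.

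First I would convert the quasi-uniformity data at each fixed $x\in\Omega$ into a quantitative bound on the largest central angle of the inscribed polygon $B_{\delta,n_{\delta,x}}$. Writing $\{\theta_i\}_{i=1}^{n_{\delta,x}}$ for the central angles and $\theta_{\max,x},\theta_{\min,x}$ for their maximum and minimum, elementary chord geometry gives
\begin{equation*}
H^{\max}_{n_{\delta,x}}=2\delta\sin(\theta_{\max,x}/2),\quad H^{\min}_{n_{\delta,x}}=2\delta\sin(\theta_{\min,x}/2),\quad r_{\delta,x}=\delta\cos(\theta_{\max,x}/2).
\end{equation*}
The inradius bound $\delta/r_{\delta,x}\leq C_2$ forces $\theta_{\max,x}\leq 2\arccos(1/C_2)<\pi$, placing every $\theta_i/2$ in a range on which $\sin$ is comparable to its argument. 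Combined with the side-length ratio bound $\sin(\theta_{\max,x}/2)\leq C_1\sin(\theta_{\min,x}/2)$ and with $\theta_{\min,x}\leq 2\pi/n_{\delta,x}$ (from $\sum_i\theta_i=2\pi$), this yields an estimate of the form
\begin{equation*}
\theta_{\max,x}\leq\frac{C}{n_{\delta,x}},
\end{equation*}
where $C$ depends only on $C_1$ and $C_2$.

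Next I would invoke the consequence of quasi-uniformity recorded just before the theorem, namely $n_{\delta_k}\leq C' n_{\delta_k,\inf}$. Hypothesis \cref{sides_cond} then forces $n_{\delta_k,\inf}\to\infty$. Since $\cos$ is decreasing on $[0,\pi/2]$, the bound of the previous paragraph gives
\begin{equation*}
\frac{\underline{r}(n_{\delta_k})}{\delta_k}=\inf_{x\in\Omega}\cos(\theta_{\max,x}/2)\geq\cos\!\left(\frac{C}{2 n_{\delta_k,\inf}}\right)\longrightarrow 1,
\end{equation*}
which establishes \cref{under_del}, and a direct application of \cref{thm:Conv_L2} concludes the proof.

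The main obstacle is the chord/angle bookkeeping in the first step: one has to combine the two quasi-uniformity bounds with sufficient care so that the estimate on $\theta_{\max,x}$ comes out with the correct $1/n_{\delta,x}$ decay and, crucially, uniformly in $x\in\Omega$. Once that estimate is in hand, the passage to $n_{\delta_k,\inf}\to\infty$ and the subsequent appeal to \cref{thm:Conv_L2} are routine.
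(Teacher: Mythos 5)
Your proof is correct, and it takes a genuinely different route from the paper's. The paper shows that \cref{sides_cond} plus quasi-uniformity forces $n_{\delta_k,{\bf x}}\geq n_{\delta_k}/C\to\infty$ uniformly in ${\bf x}$, asserts that this implies the area-ratio condition \cref{Area_cond}, and then invokes \cref{thm:Conv_L2_Area}; that theorem in turn reduces to \cref{thm:Conv_L2} by a contradiction argument built around circular caps. You instead verify the inradius-ratio hypothesis \cref{under_del} of \cref{thm:Conv_L2} directly, by relating chord lengths, central angles and the inradius of an inscribed polygon and feeding the two quasi-uniformity bounds through that chord geometry. This is more direct: it bypasses \cref{thm:Conv_L2_Area} and its contradiction argument entirely, and it is a natural generalization of \cref{Remark:Regular} (which records exactly the formula $\underline{r}(n_{\delta_k})=\delta_k\cos(\pi/n_{\delta_k})$ in the regular case) to the quasi-uniform setting. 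It is arguably cleaner than the paper's route, since the paper's step from ``$n_{\delta_k,{\bf x}}\to\infty$ uniformly'' to \cref{Area_cond} is stated without detail and would in fact also require controlling $\theta_{\max,{\bf x}}$ via quasi-uniformity, which is precisely the calculation you carry out explicitly. Two small points worth making explicit in a final write-up: the identity $r_{\delta,{\bf x}}=\delta\cos(\theta_{\max,{\bf x}}/2)$ presumes that ${\bf x}$ lies in the interior of the polygon, which is built into the definition of $r_{\delta,{\bf x}}$ (an inscribed ball centered at ${\bf x}$ with positive radius), and the comparability $\sin\theta\geq(\sin\theta_0/\theta_0)\theta$ on $[0,\theta_0]$ with $\theta_0=\arccos(1/C_2)<\pi/2$ gives the constant $C$ explicitly in terms of $C_1,C_2$.
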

\begin{proof}
By \cref{sides_cond} and the quasi-uniformity of ${\big\{} B_{\delta_{k},n_{\delta_{k},{\bf x}}}{\big\}}$, we know that
\begin{equation}\label{sidestoinf}
n_{\delta_{k},{\bf x}}\geq n_{\delta_{k},\inf}\geq \frac{1}{C} n_{\delta_{k}}\rightarrow\infty,\: \forall {\bf x}\in\Omega,
\end{equation}
and thus \cref{Area_cond} holds. Then by \cref{thm:Conv_L2_Area} we complete the proof.
\end{proof}

\begin{remark}\label{Remark:Regular}
As a typical example, for a family of regular inscribed polygons, if \cref{sides_cond} holds, we have $\underline{r}(n_{\delta_{k}})=\delta_{k}\cos\frac{\pi}{n_{\delta_{k}}}$.
Thus \cref{under_del} holds obviously.
\end{remark}
\begin{remark}
C. Vollmann \cite{vollmann2019nonlocal} extended the convergence result for kernels with Euclidean ball as interaction neighborhood illustrated in \cref{subsec:Limit_Euclid_ball} to that with balls induced by  any norm, but satisfy an analogous scaling condition. For example, if the $\|\cdot\|_{\infty}$ ball $B_{\delta,\infty}({\bf x})$ (square) is used as interaction neighborhood, the convergence of corresponding nonlocal solution to the local solution $u_{0}$ is well established if the second condition of \cref{General_kernel} is replaced by
\begin{equation}\label{Sec_moment_inf}
\int_{B_{\delta,\infty}({\bf 0})}z_{i}^{2}\widetilde{\gamma}_{\delta,\infty}(\|{\bf z}\|_{2})d{\bf z}=1,\: i=1,2.
\end{equation}
The nonlocal problems with a general norm (other than Euclidean norm) induced balls could be regarded as new models parallel to that use Euclidean ball as interaction neighborhood.
Such a conclusion are consistent to our findings here. In fact, the normalization condition \cref{Sec_moment_inf} is equivalent to the use of a
re-scaled nonlocal operator corresponding to  $(4/C_4)\mathcal{L}_{\delta,n_{\delta}}$, which has a consistent local limit as shown in \cref{conv_scale}.
\end{remark}

\subsection{Higher dimensional cases}\label{subsec:highd}
So far most of the discussions have been carried out for two dimensional case. Now we extend the established results to general high dimensional
case. In fact, \cref{thm:Conv_L2} and \cref{thm:Conv_L2_Area} are stated and indeed valid in any high dimensional case subject to the corresponding conditions on the kernels and with the polygonal approximations of the two dimensional ball changed to polyhedral approximations of high dimensional balls.
Meanwhile, modifications and extensions of \cref{thm:Conv_L2_Sides} and \cref{Remark:Regular} in high dimensional settings need to be further investigated.

\subsection{Examples of kernels}\label{subsec:kernel}
Some popular kernels satisfy the conditions \cref{kernel_finite} and \cref{General_kernel}. Here we list two special types of them for the general $d$ dimensional setting, however, the classification is not strict. For more discussions on the effects of the kernels on the nonlocal models, we refer to \cite{du2019nonlocal,seleson2011role}.

\noindent
\textit{Type} 1. Integrable kernel, for example,

\textit{Type} 1.1
Constant kernel:
\begin{equation*}
\gamma(t)=\frac{d(d+2)}{w_{d}},\:\: \mbox{for}\:\: 0\leq t< 1,
\end{equation*}
where $w_{d}$ is the surface area of the unit sphere in $\mathbb{R}^d$.

\textit{Type} 1.2
Linear kernel:
\begin{equation*}
\gamma(t)=\frac{d(d+2)(d+3)}{w_{d}}(1-t),\:\: \mbox{for}\:\: 0\leq t< 1.
\end{equation*}

\textit{Type} 1.3  Gaussian-like exponential kernel:
\begin{equation*}
\gamma(t)=\frac{d}{C_{e}w_{d}}e^{-t^2},\:\: \mbox{for}\:\: 0\leq t<1,\:\: \mbox{with} \:\: C_{e}=\int_{0}^{1}\tau^{d+1}e^{-\tau^2}d\tau.
\end{equation*}

\noindent
\textit{Type} 2. Singular (at the origin) kernel:
\begin{equation}\label{singular_kernel}
\gamma(t)=\frac{d(d+2-s)}{w_{d}}t^{-s}\:\: \mbox{for}\:\: 0< t< 1,
\end{equation}
which stands for different kernels for different values of $s$. For $s\in(d,d+2)$ it is  the fractional kernel, for $s=1$ it is the often used peridynamics kernel \cite{silling2005meshfree}, for $s<d$ it is in fact integrable.

\section{Verification of the theory, numerical results and discussions}\label{sec:numer}
\subsection{Verification of the theory}
By the proof of \cref{thm:Conv_L2} we know that whether the formula \cref{sigma_lim} holds or not is crucial for the convergence result of \cref{thm:Conv_L2}, \cref{thm:Conv_L2_Area} and \cref{thm:Conv_L2_Sides}. Here we give two examples, which differ in the choices of the kernels, to compute explicitly the corresponding $\sigma_{n,i}$, using the family of inscribed regular even-sided polygons ${\big\{} B_{\delta,n}{\big\}}$.

The first example uses the constant kernel \cref{const_2d}, the following result holds by simple calculation,
\begin{equation*}
\sigma_{n,i}=\frac{\sin(2\pi/n)}{2\pi/n}\cdot\frac{2+\cos(2\pi/n)}{3},\: i=1,2,
\end{equation*}
which is also related to \cref{conv_wrong} since $\mathcal{L}_{\delta,n_{\delta}}q({\bf x})=C_{n}=2(\sigma_{n,1}+\sigma_{n,2})$.

The second example uses the so-called peridynamics kernel, that is \cref{singular_kernel} with $d=2$ and $s=1$, it holds that
\begin{equation*}
\sigma_{n,i}=\frac{n}{\pi}{\big(}\cos(\pi/n){\big)}^{3}\int_{0}^{\pi/n}\frac{d\theta}{(\cos\theta)^{3}},\: i=1,2.
\end{equation*}
Obviously, for the two examples above, we have
\begin{equation*}
\sigma_{n,i}<1,\: i=1,2,\: \forall n,
\end{equation*}
which verifies \cref{thm:Notconv}, see \cref{conv_wrong}.
Moreover,
\begin{equation}\label{sigma_lim_ex}
\lim\limits_{n\rightarrow\infty}\sigma_{n,i}=1,\: i=1,2,
\end{equation}
which confirms the assertion \cref{sigma_lim} under the condition \cref{sides_cond}. This also partially verifies \cref{thm:Conv_L2_Sides}. To fully verify \cref{thm:Conv_L2_Sides}, in the following subsection we use numerical solutions computed by taking the mesh size $h$ to be of higher-order of the horizon parameter $\delta$ as $\delta\to 0$ (resulting in numerically constructed polygonal interaction neighborhoods with $n_\delta\to \infty$) to illustrate that \cref{thm:Conv_L2_Sides} holds numerically.

\subsection{Numerical results}
\begin{example}\label{Example:Continuous}
We consider the nonlocal problem \cref{nonlocal_diffusion} and \cref{nonlocal_approx} on the domain $\Omega=(0,1)^{2}$ with the constant kernel \cref{const_2d}, where $\mathcal{L}_{\delta,n_{\delta}}$ is defined by the strategy $\sharp=nocaps$ to approximate balls. Note that, for ${\bf x}\in\Omega$, $\mathcal{L}_{\delta}u_{\delta}({\bf x})=\Delta u_{\delta}({\bf x})$ for $\delta\geq 0$ and polynomials $u_{\delta}({\bf x})$ with order up to three \cite{d2021cookbook,vollmann2019nonlocal}. As in \cite{d2021cookbook} the manufactured solution $u_{\delta}({\bf x})=x_{1}^2x_{2}+x_{2}^{2}$ of \cref{nonlocal_diffusion} is used to obtain the right hand side function $f_{0}({\bf x})=f_{\delta}({\bf x})=-\Delta u_{\delta}=-2(x_{2}+1)$ for ${\bf x}\in\Omega$.
\end{example}

In fact the solution of corresponding local problem \cref{local_weak} is
$u_{0}({\bf x})=u_{\delta}({\bf x})|_{\Omega}=x_{1}^2x_{2}+x_{2}^{2}$. This means that the nonlocal solution of \cref{nonlocal_diffusion} converges to the local solution of \cref{local_weak}.

To verify the conditional AC property of the polygonal approximation, the nonlocal problem \cref{nonlocal_approx} is discretized by the conforming DG method proposed in \cite{du2019conforming} with quasi-uniform triangulation.

Since $\mathcal{L}_{\delta,n_{\delta}}$ is defined by the strategy $\sharp={\rm nocaps}$, we denote the corresponding conforming DG solution of \cref{nonlocal_approx}  as $u_{\delta,n_{\delta}}^{\rm D\!, nocaps}$. We set $h=\mathcal{O}{\big(}\delta^{\beta}{\big)}$ with $\beta>1$. On one hand, for every $\delta$ mesh size $h$ is relatively small enough so the error caused by the conforming DG is also small enough, that is $u_{\delta,n_{\delta}}^{\rm D\!, nocaps} \approx u_{\delta,n_{\delta}}$. On the other hand, $\delta/h\rightarrow\infty$ holds (leading to $n_\delta\to \infty$) as $\delta\rightarrow 0$. From \cref{Fig:L2err}, we find positive convergence rates that are dependent on $\beta$. More detailed estimates of the error $u_{\delta,n_{\delta}}^{\rm D\!, nocaps}-u_{\delta,n_{\delta}}$ will be derived in a subsequent work.
\begin{figure}[tbhp]
\centering
\includegraphics[width=6.5cm]{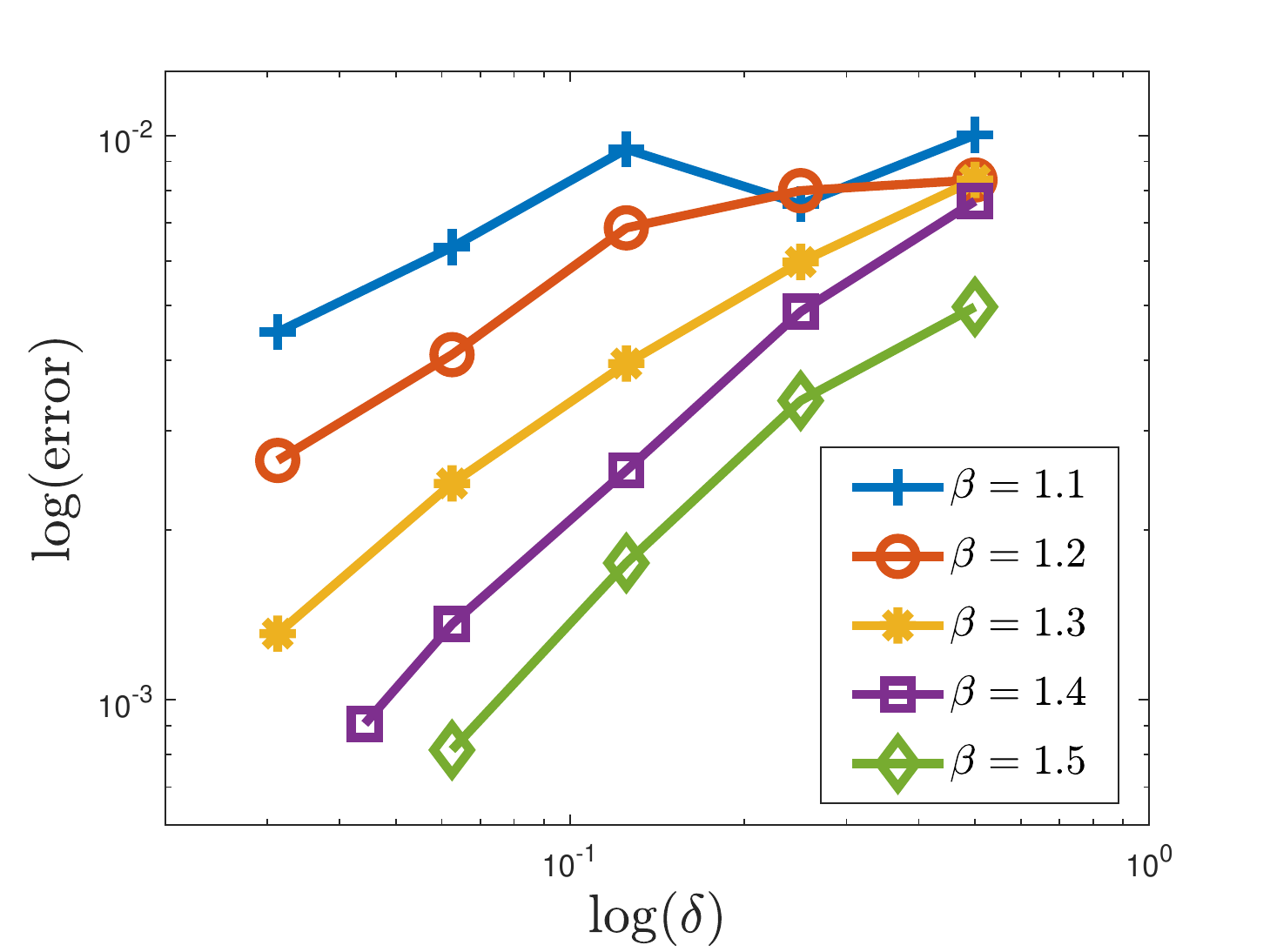}
\caption{$L^{2}$ error between the conforming DG solution of \cref{nonlocal_approx} and the exact local solution $u_{0}$}\label{Fig:L2err}
\end{figure}

\subsection{Discussions}
Now, we extend the results above to the other cases in \cref{appro_ball} which are used to construct the corresponding polygons and thus define the operator $\mathcal{L}_{\delta,n_{\delta}}$. The strategy $\sharp=\rm regular$ is a special case of $\sharp=\rm nocaps$. The strategy $\sharp=\rm approxcaps$ also uses inscribed polygons to approximate the exact ball, the conclusion is similar to that of $\sharp=\rm nocaps$. To be specific, for bounded $\delta/h$ and quasi-uniform triangulation, if the number of sub-triangles that approximate the caps is also bounded, then by \cref{thm:Notconv} $u_{\delta,n_{\delta}}$ does not converge to the exact local solution as $\delta\rightarrow 0$. On the other hand, if the number of sub-triangles tends to infinity as $\delta\rightarrow 0$, thus \cref{sides_cond} holds, then  by \cref{thm:Conv_L2_Sides} we know $u_{\delta,n_{\delta}}$ converges to the exact local solution as $\delta\rightarrow 0$. For the strategy $\sharp=\rm 3vertices$: if $\delta/h$ is uniformly bounded as $\delta\rightarrow 0$, then by \cref{thm:Notconv} the non-convergence is established. If $\delta/h$ tends to infinity as $\delta\rightarrow 0$, we can not use \cref{thm:Conv_L2_Area} or \cref{thm:Conv_L2_Sides} since the polygons are not always convex. However, we could use \cref{thm:Conv_L2} to prove that $u_{\delta,n_{\delta}}$ converges to $u_{0}$ as $\delta\rightarrow 0$. In fact, for quasi-uniform triangulation we have
\begin{equation*}
\lim\limits_{\delta\rightarrow 0}\frac{\underline{r}(n_{\delta})}{\delta}\geq\lim\limits_{\delta\rightarrow 0}\frac{\delta-h}{\delta}=1,
\end{equation*}
under the condition that $\delta/h$ tends to infinity as $\delta\rightarrow 0$, which means \cref{under_del} holds.

For the rest types of polygonal approximations of Euclidean balls in \cref{appro_ball}, i.e.
\begin{equation}\label{sharp_rest}
\sharp\in\big\{\rm 123vertices, 23vertices, barycenter\big\},
\end{equation}
the corresponding families of polygons are no longer contained in corresponding Euclidean balls, that is,  the condition \cref{Subset} does not hold. However, the analysis is similar to that derived in this paper, with consideration for suitable extension rules of the original kernel to the outside of the interaction neighborhood.

\section{Conclusion}\label{sec:conclusion}
We discussed in this paper some new nonlocal operators which, on the continuum level, are approximations of the nonlocal operators with the Euclidean balls as interaction neighborhoods. They are defined through polygons that approximate the interaction neighborhoods of the original operators. It is well known that the original nonlocal operators converge to the local operator, at the same time the convergence for the nonlocal solutions to the local one is also well established. However, as shown in \cref{subsec:lossAC}, the new nonlocal operators may not converge to the local operator as $\delta$ vanishes if the number of sides of the polygons, $n_{\delta}$, is uniformly bounded. This phenomenon is interpreted as a lost of AC property for the polygonal approximation. We proved that the new nonlocal solutions converge to the local solution under suitable conditions, which means the polygonal approximation of radially symmetric interaction neighborhood possesses conditional AC property.

The current study focuses on the convergence of the nonlocal solutions $u_{\delta,n_{\delta}}$ to the local one. However, when some specific numerical schemes are used, one interesting question is how the convergence order depends on parameters with respect to the vanishing parameter $\delta$. This issue is briefly touched upon in \cref{Fig:L2err} where the convergence orders are shown to be dependent on $\beta$. More details will be discussed in a subsequent work.
Naturally, extensions to higher dimensions and to nonlinear and time-dependent problems, as well as studies on the coupling with suitable quadrature schemes, are also questions that remain to be further studied.

\appendix
\section{The limiting behavior of the norm}\label{sec:appendix}
\begin{theorem}\label{thm:Conv_Stand}
Assume that the kernels $\{\gamma_{\delta_{k}}\}$ satisfy the conditions \cref{kernel_finite} and \cref{General_kernel}. We have the following pointwise limit
\begin{equation}\label{Formula_Conv_Stand}
\lim_{k\rightarrow\infty}\int_{D}\int_{D}{\big|}u({\bf y})-u({\bf x}){\big|}^2\gamma_{\delta_{k}}({\bf x},{\bf y})d{\bf y}d{\bf x}= \int_{D}|\nabla u({\bf x})|^{2}d{\bf x}.
\end{equation}
\end{theorem}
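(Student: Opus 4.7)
The identity is a Bourgain--Brezis--Mironescu (BBM) type limit, so the plan is to invoke that machinery, noting that the sequence $\rho_k({\bf x},{\bf y})=\|{\bf y}-{\bf x}\|_2^2 \gamma_{\delta_k}({\bf x},{\bf y})$ satisfies precisely the concentration hypotheses recorded in \eqref{rho_cond_Stand}. I would proceed in three stages: first verify the identity for smooth functions by direct Taylor expansion, then establish a uniform energy bound, and finally extend to $H^1$ by density.

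Stage 1. For $u \in C^2(\bar{D})$, I write
$$u({\bf y})-u({\bf x})=\nabla u({\bf x})\cdot({\bf y}-{\bf x})+R({\bf x},{\bf y}),\qquad |R({\bf x},{\bf y})|\le C(u)\|{\bf y}-{\bf x}\|_2^2,$$
square, and integrate against $\gamma_{\delta_k}({\bf x},{\bf y})$ in ${\bf y}$. Because $\gamma_{\delta_k}$ is radial and supported in $B_{\delta_k}({\bf x})$, the off-diagonal second moments $\int (y_i-x_i)(y_j-x_j)\gamma_{\delta_k}\,d{\bf y}$ vanish by symmetry for $i\ne j$, while the normalization in \eqref{General_kernel} forces the diagonal moments to equal $1$. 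The leading term therefore contributes exactly $|\nabla u({\bf x})|^2$, and the remainder terms are of order $\delta_k^2 \|u\|_{C^2}^2$ after using $\int \|{\bf y}-{\bf x}\|_2^p \gamma_{\delta_k}\,d{\bf y}\lesssim \delta_k^{p-2}$ for $p\ge 2$. Integrating in ${\bf x}$ and applying dominated convergence (with dominator provided by the second-moment normalization) yields the identity for smooth $u$.

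Stage 2. To pass to $u \in H^1(D)$, I invoke the uniform-in-$k$ BBM inequality
$$\int_D\int_D |v({\bf y})-v({\bf x})|^2 \gamma_{\delta_k}({\bf x},{\bf y})\,d{\bf y}\,d{\bf x}\le C\|\nabla v\|_{L^2(D)}^2\qquad \forall v\in H^1(D),$$
which is classical for kernel sequences satisfying \eqref{rho_cond_Stand}; see \cite{bourgain2001another,Ponce2004An,mengesha2012nonlocal}. Given $u \in H^1(D)$ arising as the homogeneous extension from $H^1_0(\Omega)$, I approximate by $u_\varepsilon \in C^\infty_c(D)$ in $H^1$-norm and use a triangle inequality combining the Stage 1 identity for $u_\varepsilon$ with the uniform bound applied to $u-u_\varepsilon$. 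Letting $k\to\infty$ and then $\varepsilon\to 0$ closes the argument, yielding \eqref{Formula_Conv_Stand}.

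The main obstacle is Stage 2: producing the uniform BBM inequality in a form that allows approximation across the full domain $D=\Omdh$. The homogeneous extension guarantees $u\equiv 0$ on $\Omdc$, so the boundary layer contributions near $\partial D$ are harmless, and the fact that $B_{\delta_k}({\bf x})$ remains inside a fixed tubular neighborhood of $\Omega$ for $\delta_k\le \delta_0$ keeps all integrations over bounded regions. With this arrangement, the classical BBM inequality transfers directly to our setting and the density argument goes through.
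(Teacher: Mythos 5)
Your proposal is correct and takes essentially the same approach as the paper: both recognize $\rho_k({\bf x},{\bf y})=\|{\bf y}-{\bf x}\|_2^2\gamma_{\delta_k}({\bf x},{\bf y})$ as a BBM mollifier sequence satisfying \cref{rho_cond_Stand} and reduce the claim to the Bourgain--Brezis--Mironescu limit theorem. The paper simply cites \cite[Corollary 1]{bourgain2001another} (or \cite[Theorem 2]{brezis2002recognize}) at that point, while your Stages 1--3 reproduce the standard proof of that cited result (Taylor expansion on $C^2$, uniform energy bound, density) rather than invoking it as a black box --- more work, but not a genuinely different argument.
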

\begin{proof}
Let $\rho_{k}({\bf x},{\bf y})=\|{\bf y}-{\bf x}\|_{2}^2\gamma_{\delta_{k}}({\bf x},{\bf y})$, then $\{\rho_{k}\}$ is a sequence of radial
functions and satisfies the conditions \cref{rho_cond_Stand}. From \cite[Corollary 1]{bourgain2001another} or \cite[Theorem 2]{brezis2002recognize}, we know
\begin{equation*}
\lim_{k\rightarrow\infty}\int_{D}\int_{D}\frac{{\big|}u({\bf y})-u({\bf x}){\big|}^2}{\|{\bf y}-{\bf x}\|_{2}^2}\rho_{k}({\bf x},{\bf y})d{\bf y}d{\bf x}= \int_{D}|\nabla u({\bf x})|^{2}d{\bf x},
\end{equation*}
which is in fact \cref{Formula_Conv_Stand}, we then complete the proof.
\end{proof}
\begin{theorem}\label{thm:Conv_Modify}
Under the condition of \cref{thm:Conv_Stand}, and assume that the counterpart in $d$ dimensional case of \cref{General_Cond_2D} holds, we have the following pointwise limit
\begin{equation*}
\lim_{k\rightarrow\infty}\int_{D}\int_{D}{\big|}u({\bf y})-u({\bf x}){\big|}^2\gamma_{\delta_{k}}({\bf x},{\bf y})\chi_{\underline{r}(n_{\delta_{k}})}{\big(}\|{\bf y}-{\bf x}\|_{2}{\big)}d{\bf y}d{\bf x}
= \int_{D}|\nabla u({\bf x})|^{2}d{\bf x}.
\end{equation*}
\end{theorem}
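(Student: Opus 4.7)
The plan is to imitate the argument used for \cref{thm:Conv_Stand}, with the indicator $\chi_{\underline{r}(n_{\delta_k})}$ absorbed into the radial weight. I would set
\[
\rho_k(\mathbf{x},\mathbf{y}) = \|\mathbf{y}-\mathbf{x}\|_2^{2} \gamma_{\delta_k}(\mathbf{x},\mathbf{y}) \chi_{\underline{r}(n_{\delta_k})}\!\bigl(\|\mathbf{y}-\mathbf{x}\|_2\bigr),
\]
so that the quantity inside the limit rewrites as
\[
\int_{D}\!\int_{D}\frac{|u(\mathbf{y})-u(\mathbf{x})|^{2}}{\|\mathbf{y}-\mathbf{x}\|_2^{2}}\,\rho_k(\mathbf{x},\mathbf{y})\,d\mathbf{y}\,d\mathbf{x}.
\]
The task then reduces to checking that $\{\rho_k\}$ is an admissible family of radial concentrating kernels and invoking a Bourgain--Brezis--Mironescu style pointwise identity.

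Admissibility amounts to verifying the three bullets of \cref{rho_cond_Modify} in the $d$-dimensional setting. Nonnegativity and radial symmetry of $\rho_k$ are immediate from \cref{kernel_finite} and \cref{General_kernel}. The concentration condition $\int_{\|\mathbf{y}-\mathbf{x}\|_2>\varepsilon}\rho_k\,d\mathbf{y}\to 0$ for each fixed $\varepsilon>0$ is automatic, since $\gamma_{\delta_k}$ is supported in $B_{\delta_k}(\mathbf{x})$ and $\delta_k\to 0$, so the integrand vanishes identically for all $k$ large enough that $\delta_k<\varepsilon$. The (asymptotic) mass condition is where the new hypothesis enters: summing the $d$-dimensional counterpart of \cref{General_Cond_2D} over $i=1,\ldots,d$,
\[
\int_{\mathbb{R}^{d}}\rho_k(\mathbf{x},\mathbf{y})\,d\mathbf{y}=\sum_{i=1}^{d}\int_{\|\mathbf{z}\|_2\leq \underline{r}(n_{\delta_k})} z_{i}^{2}\,\widetilde{\gamma}_{\delta_k}(\|\mathbf{z}\|_2)\,d\mathbf{z}\;\xrightarrow[k\to\infty]{}\;d,
\]
which is exactly the second line of \cref{rho_cond_Modify}.

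With the three conditions in hand I would then apply the pointwise limit theorem of \cite[Corollary 1]{bourgain2001another} or \cite[Theorem 2]{brezis2002recognize}, precisely as in the proof of \cref{thm:Conv_Stand}, to conclude the claimed equality. The main obstacle I foresee is purely cosmetic: the classical statement assumes $\int\rho_k\,d\mathbf{y}$ equals a fixed constant, whereas here the mass $c_k:=\int_{\mathbb{R}^d}\rho_k(\mathbf{x},\mathbf{y})\,d\mathbf{y}$ only tends to $d$. The standard workaround is a renormalization: apply the classical result to $\widetilde{\rho}_k:=(d/c_k)\rho_k$, which has constant mass $d$ and inherits the concentration property, then multiply by the scalar $c_k/d\to 1$ and use linearity of the double integral in the kernel. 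This is the same device tacitly used in the proof of \cref{lemma:Compact_Modify}, where the authors point out that the limit form of the mass condition is already sufficient for the arguments of \cite{bourgain2001another,mengesha2012nonlocal,Ponce2004An}.
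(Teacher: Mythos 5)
Your proposal is correct, but it takes a noticeably different route from the paper's proof. The paper does \emph{not} apply the Bourgain--Brezis--Mironescu (BBM) limit theorem as a black box to the truncated kernel. Instead it borrows only the density reduction from the BBM proof (``it suffices to prove the identity for $u\in C^{2}(\bar D)$''), then for such $u$ performs an explicit Taylor expansion of $u({\bf y})-u({\bf x})$, splits the inner integral at $\|{\bf y}-{\bf x}\|_{2}=\operatorname{dist}({\bf x},\partial D)$, and directly computes
\[
\int_{\|{\bf z}\|_{2}<\underline{r}(n_{\delta_{k}})}\big|{\bf z}\cdot\nabla u({\bf x})\big|^{2}\widetilde{\gamma}_{\delta_{k}}(\|{\bf z}\|_{2})\,d{\bf z}
=\sum_{i=1}^{d}\big(\partial_{i}u({\bf x})\big)^{2}\int_{\|{\bf z}\|_{2}<\underline{r}(n_{\delta_{k}})}z_{i}^{2}\widetilde{\gamma}_{\delta_{k}}(\|{\bf z}\|_{2})\,d{\bf z}
\;\longrightarrow\;|\nabla u({\bf x})|^{2},
\]
with the last step invoking exactly the $d$-dimensional counterpart of \cref{General_Cond_2D}. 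You instead verify the $d$-dimensional analogue of \cref{rho_cond_Modify}, observe that the asymptotic mass condition $c_{k}:=\int\rho_{k}\to d$ is weaker than the exact normalization $\int\rho_{k}\equiv d$ required by the classical statement, and repair this by the renormalization $\widetilde\rho_{k}=(d/c_{k})\rho_{k}$ followed by a scaling-back step using $c_{k}/d\to 1$. This is a clean and valid device: linearity of the bilinear form in the kernel plus the fact that $c_{k}\to d>0$ (so $c_{k}>0$ for large $k$) make the multiplication by $c_{k}/d$ harmless, and after renormalization BBM applies verbatim. The tradeoff is that the paper's approach is more self-contained and makes transparent exactly which moment quantity must converge to $1$ (this is the same quantity $\sigma_{n_{\delta},i}$ that recurs in the $L^{2}$-convergence proof), while your approach is shorter and treats BBM as a black box, at the cost of an extra normalization layer that slightly obscures where the hypothesis \cref{General_Cond_2D} is used. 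Both routes are sound.
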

\begin{proof}
From the proof of \cite[Theorem 2]{bourgain2001another}, we just need to give the proof for $u\in C^{2}(\bar{D})$. In this situation,
\begin{equation*}
{\big|}u({\bf y})-u({\bf x}){\big|}={\big|}({\bf y}-{\bf x})\cdot\nabla u({\bf x}){\big|}+o{\big(}\|{\bf y}-{\bf x}\|_{2}^{2}{\big)}.
\end{equation*}
For each fixed ${\bf x}\in\Omega$,
\begin{align*}
&\int_{D}{\big|}u({\bf y})-u({\bf x}){\big|}^2\gamma_{\delta_{k}}({\bf x},{\bf y})\chi_{\underline{r}(n_{\delta_{k}})}{\big(}\|{\bf y}-{\bf x}\|_{2}{\big)}d{\bf y}\\
&=\int_{\|{\bf y}-{\bf x}\|_{2}<\mbox{dist}({\bf x},\partial D)}\mbox{integrand}\:d{\bf y}+\int_{\|{\bf y}-{\bf x}\|_{2}\geq\mbox{dist}({\bf x},\partial D)}\mbox{integrand}\:d{\bf y}.
\end{align*}
The second integral tends to $0$ as $k\rightarrow \infty$. Set $R=\mbox{dist}({\bf x},\partial D)$, when $\underline{r}(n_{\delta_{k}})\leq R$,
\begin{align*}
&\int_{\|{\bf y}-{\bf x}\|_{2}<R}{\big|}u({\bf y})-u({\bf x}){\big|}^2\gamma_{\delta_{k}}({\bf x},{\bf y})\chi_{\underline{r}(n_{\delta_{k}})}{\big(}\|{\bf y}-{\bf x}\|_{2}{\big)}d{\bf y}\\
&=\int_{\|{\bf z}\|_{2}<\underline{r}(n_{\delta_{k}})}{\Big(}{\big|}{\bf z}\cdot\nabla u({\bf x}){\big|}^{2}+o{\big(}r^{2}{\big)}{\Big)}\widetilde{\gamma}_{\delta_{k}}(\|{\bf z}\|_{2})d{\bf z}\\
&=\int_{\|{\bf z}\|_{2}<\underline{r}(n_{\delta_{k}})}{\big|}{\bf z}\cdot\nabla u({\bf x}){\big|}^{2}\widetilde{\gamma}_{\delta_{k}}(\|{\bf z}\|_{2})d{\bf z}
+\int_{0}^{\underline{r}(n_{\delta_{k}})}\rho_{k}(r)\int_{\|{\bf z}\|_{2}=r}o{\big(}r^{2}{\big)}d{\bf S}_{d-1}dr\\
&=\sum_{i=1}^{d}{\big(}\partial_{i}u({\bf x}){\big)}^{2}\int_{\|{\bf z}\|_{2}<\underline{r}(n_{\delta_{k}})}z_{i}^2\widetilde{\gamma}_{\delta_{k}}(\|{\bf z}\|_{2})d{\bf z}+o\left(\int_{0}^{\underline{r}(n_{\delta_{k}})}r^{d+1}\widetilde{\gamma}_{\delta_{k}}(r)dr \right).
\end{align*}
From the second condition of \cref{General_kernel} and \cref{General_Cond_2D}, we know that
\begin{equation*}
\lim_{k\rightarrow\infty}\int_{\|{\bf z}\|_{2}<\underline{r}(n_{\delta_{k}})}z_{i}^2\widetilde{\gamma}_{\delta_{k}}(\|{\bf z}\|_{2})d{\bf z}=\lim_{k\rightarrow\infty}\int_{\|{\bf z}\|_{2}<\delta_{k}}z_{i}^2\widetilde{\gamma}_{\delta_{k}}(\|{\bf z}\|_{2})d{\bf z}=1,\: i=1,2,\cdots,d.
\end{equation*}
Thus
\begin{equation*}
\lim_{k\rightarrow\infty}\int_{D}{\big|}u({\bf y})-u({\bf x}){\big|}^2\gamma_{\delta_{k}}({\bf x},{\bf y})\chi_{\underline{r}(n_{\delta_{k}})}{\big(}\|{\bf y}-{\bf x}\|_{2}{\big)}d{\bf y}
=|\nabla u({\bf x})|^{2}.
\end{equation*}
\end{proof}

\begin{theorem}\label{thm:Conv_Oper}
Under the condition of \cref{thm:Conv_Modify}, we have the pointwise limit
\begin{equation}\label{Cond_Conv}
\lim_{k\rightarrow\infty}\int_{D}\int_{D}{\big|}u({\bf y})-u({\bf x}){\big|}^2\gamma_{\delta_{k},n_{\delta_{k}}}({\bf x},{\bf y})d{\bf y}d{\bf x}= \int_{D}|\nabla u({\bf x})|^{2}d{\bf x}.
\end{equation}
This means $\lim\limits_{k\rightarrow\infty}\|u({\bf x})\|_{\delta_{k},n_{\delta_{k}}}^{2}= \int_{D}|\nabla u({\bf x})|^{2}d{\bf x}$.
\end{theorem}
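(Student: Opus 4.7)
The plan is to prove \eqref{Cond_Conv} by a direct squeeze argument, exploiting the two-sided bound \eqref{equiv_norm} that was established for the modified energy norm. Recall that for all admissible $u$ we have
\begin{equation*}
\|u\|_{\underline{r}(n_{\delta_k})}^{2}\;\le\;\|u\|_{\delta_k,n_{\delta_k}}^{2}\;\le\;\|u\|_{\delta_k}^{2},
\end{equation*}
so the quantity whose limit we want to identify is sandwiched between two quantities whose limits are already known.

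First I would invoke \cref{thm:Conv_Stand}, which gives the upper limit
\begin{equation*}
\lim_{k\to\infty}\|u\|_{\delta_k}^{2}=\int_{D}|\nabla u({\bf x})|^{2}d{\bf x}.
\end{equation*}
Next I would invoke \cref{thm:Conv_Modify}, whose hypotheses are assumed to hold here, to obtain the lower limit
\begin{equation*}
\lim_{k\to\infty}\|u\|_{\underline{r}(n_{\delta_k})}^{2}=\int_{D}|\nabla u({\bf x})|^{2}d{\bf x}.
\end{equation*}
Since the two bounding sequences share a common limit, the squeeze theorem immediately yields \eqref{Cond_Conv}, which in turn is precisely the statement $\lim_{k\to\infty}\|u\|_{\delta_k,n_{\delta_k}}^{2}=\int_{D}|\nabla u|^{2}d{\bf x}$.

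There is essentially no hard step: all of the analytic difficulty (the handling of the truncated kernel, the verification of the moment condition \eqref{General_Cond_2D_new}, and the lower-order remainder in the Taylor expansion used to identify the local limit) has already been absorbed into \cref{thm:Conv_Modify}. The only thing worth double-checking in the write-up is that the two-sided bound \eqref{equiv_norm} applies to the class of $u$ considered here (it does, since it holds for any $u\in L^{2}(\widehat{\Omega}_{\delta})$ with $u\equiv 0$ on $\Omega_{\delta}^{c}$, with the inequalities understood in $[0,\infty]$), so that the squeeze argument is valid without any integrability caveats. Thus the proof reduces to one line of inequalities followed by a single application of the squeeze theorem.
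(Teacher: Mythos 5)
Your proposal is correct and follows exactly the paper's own argument: use the two-sided bound \cref{equiv_norm}, identify the limits of the bounding sequences via \cref{thm:Conv_Stand} and \cref{thm:Conv_Modify}, and conclude by the squeeze theorem. No differences to note.
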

\begin{proof}
By \cref{equiv_norm}, that is $\forall u\in V(\widehat{\Omega}_{\delta_{k}})$,
$\|u({\bf x})\|_{\underline{r}(n_{\delta_{k}})}\leq\|u({\bf x})\|_{\delta_{k},n_{\delta_{k}}}\leq\|u({\bf x})\|_{\delta_{k}}$.
Together with \cref{thm:Conv_Stand}, \cref{thm:Conv_Modify}, and the squeeze theorem, we get \cref{Cond_Conv}.
\end{proof}

\bibliographystyle{siamplain}
\bibliography{references}

\begin{thebibliography}{10}

\bibitem{bates1999integrodifferential}
{\sc P.~W. Bates and A.~Chmaj}, {\em An integrodifferential model for phase
  transitions: stationary solutions in higher space dimensions}, Journal of
  Statistical Physics, 95 (1999), pp.~1119--1139.

\bibitem{bourgain2001another}
{\sc J.~Bourgain, H.~Brezis, and P.~Mironescu}, {\em Another look at sobolev
  spaces}, in Optimal Control and Partial Differential Equations, JL Menaldi,
  E. Rofman, A. Sulem (Eds.), a volume in honor of A. Bensoussan's 60th
  birthday, IOS Press, Amsterdam, 2001.

\bibitem{brezis2002recognize}
{\sc H.~Brezis}, {\em How to recognize constant functions. connections with
  sobolev spaces}, Russian Mathematical Surveys, 57 (2002), pp.~693--708.

\bibitem{buades2010image}
{\sc A.~Buades, B.~Coll, and J.-M. Morel}, {\em Image denoising methods. a new
  nonlocal principle}, SIAM Review, 52 (2010), pp.~113--147.

\bibitem{burch2014exit}
{\sc N.~Burch, M.~D'Elia, and R.~B. Lehoucq}, {\em The exit-time problem for a
  markov jump process}, The European Physical Journal Special Topics, 223
  (2014), pp.~3257--3271.

\bibitem{cortazar2008approximate}
{\sc C.~Cortazar, M.~Elgueta, J.~D. Rossi, and N.~Wolanski}, {\em How to
  approximate the heat equation with neumann boundary conditions by nonlocal
  diffusion problems}, Archive for Rational Mechanics and Analysis, 187 (2008),
  pp.~137--156.

\bibitem{delgoshaie2015non}
{\sc A.~H. Delgoshaie, D.~W. Meyer, P.~Jenny, and H.~A. Tchelepi}, {\em
  Non-local formulation for multiscale flow in porous media}, Journal of
  Hydrology, 531 (2015), pp.~649--654.

\bibitem{d2020numerical}
{\sc M.~D'Elia, Q.~Du, C.~Glusa, M.~Gunzburger, X.~Tian, and Z.~Zhou}, {\em
  Numerical methods for nonlocal and fractional models}, Acta Numerica, 29
  (2020), pp.~1--124.

\bibitem{d2017nonlocal}
{\sc M.~D'Elia, Q.~Du, M.~Gunzburger, and R.~Lehoucq}, {\em Nonlocal
  convection-diffusion problems on bounded domains and finite-range jump
  processes}, Computational Methods in Applied Mathematics, 17 (2017),
  pp.~707--722.

\bibitem{d2021cookbook}
{\sc M.~D'Elia, M.~Gunzburger, and C.~Vollmann}, {\em A cookbook for
  approximating euclidean balls and for quadrature rules in finite element
  methods for nonlocal problems}, Mathematical Models and Methods in Applied
  Sciences, 31 (2021), pp.~1505--1567.

\bibitem{du2019nonlocal}
{\sc Q.~Du}, {\em Nonlocal modeling, analysis and computation}, SIAM, 2019.

\bibitem{du2012analysis}
{\sc Q.~Du, M.~Gunzburger, R.~B. Lehoucq, and K.~Zhou}, {\em Analysis and
  approximation of nonlocal diffusion problems with volume constraints}, SIAM
  Review, 54 (2012), pp.~667--696.

\bibitem{du2013nonlocal}
{\sc Q.~Du, M.~Gunzburger, R.~B. Lehoucq, and K.~Zhou}, {\em A nonlocal vector
  calculus, nonlocal volume-constrained problems, and nonlocal balance laws},
  Mathematical Models and Methods in Applied Sciences, 23 (2013), pp.~493--540.

\bibitem{du2019asymptotically}
{\sc Q.~Du, Y.~Tao, X.~Tian, and J.~Yang}, {\em Asymptotically compatible
  discretization of multidimensional nonlocal diffusion models and
  approximation of nonlocal green’s functions}, IMA Journal of Numerical
  Analysis, 39 (2019), pp.~607--625.

\bibitem{du2019conforming}
{\sc Q.~Du and X.~Yin}, {\em A conforming dg method for linear nonlocal models
  with integrable kernels}, Journal of Scientific Computing, 80 (2019),
  pp.~1913--1935.

\bibitem{du2019uniform}
{\sc Q.~Du, J.~Zhang, and C.~Zheng}, {\em On uniform second order nonlocal
  approximations to linear two-point boundary value problems}, Communications
  in Mathematical Sciences, 17 (2019), pp.~1737--1755.

\bibitem{fife2003some}
{\sc P.~Fife}, {\em Some nonclassical trends in parabolic and parabolic-like
  evolutions}, Trends in nonlinear analysis,  (2003), pp.~153--191.

\bibitem{gilboa2007nonlocal}
{\sc G.~Gilboa and S.~Osher}, {\em Nonlocal linear image regularization and
  supervised segmentation}, Multiscale Modeling \& Simulation, 6 (2007),
  pp.~595--630.

\bibitem{gilboa2009nonlocal}
{\sc G.~Gilboa and S.~Osher}, {\em Nonlocal operators with applications to
  image processing}, Multiscale Modeling \& Simulation, 7 (2009),
  pp.~1005--1028.

\bibitem{ha2011characteristics}
{\sc Y.~D. Ha and F.~Bobaru}, {\em Characteristics of dynamic brittle fracture
  captured with peridynamics}, Engineering Fracture Mechanics, 78 (2011),
  pp.~1156--1168.

\bibitem{lehoucq2016radial}
{\sc R.~B. Lehoucq and S.~T. Rowe}, {\em A radial basis function galerkin
  method for inhomogeneous nonlocal diffusion}, Computer Methods in Applied
  Mechanics and Engineering, 299 (2016), pp.~366--380.

\bibitem{leng2022petrov}
{\sc Y.~Leng, X.~Tian, L.~Demkowicz, H.~Gomez, and J.~T. Foster}, {\em A
  petrov-galerkin method for nonlocal convection-dominated diffusion problems},
  Journal of Computational Physics,  (2022), p.~110919.

\bibitem{leng2021asymptotically}
{\sc Y.~Leng, X.~Tian, N.~Trask, and J.~T. Foster}, {\em Asymptotically
  compatible reproducing kernel collocation and meshfree integration for
  nonlocal diffusion}, SIAM Journal on Numerical Analysis, 59 (2021),
  pp.~88--118.

\bibitem{leng2020asymptotically}
{\sc Y.~Leng, X.~Tian, N.~A. Trask, and J.~T. Foster}, {\em Asymptotically
  compatible reproducing kernel collocation and meshfree integration for the
  peridynamic navier equation}, Computer Methods in Applied Mechanics and
  Engineering, 370 (2020), p.~113264.

\bibitem{littlewood2010simulation}
{\sc D.~J. Littlewood}, {\em Simulation of dynamic fracture using peridynamics,
  finite element modeling, and contact}, in ASME International Mechanical
  Engineering Congress and Exposition, vol.~44465, 2010, pp.~209--217.

\bibitem{lou2010image}
{\sc Y.~Lou, X.~Zhang, S.~Osher, and A.~Bertozzi}, {\em Image recovery via
  nonlocal operators}, Journal of Scientific Computing, 42 (2010),
  pp.~185--197.

\bibitem{meerschaert2019stochastic}
{\sc M.~M. Meerschaert and A.~Sikorskii}, {\em Stochastic models for fractional
  calculus}, de Gruyter, 2019.

\bibitem{mengesha2012nonlocal}
{\sc T.~Mengesha}, {\em Nonlocal korn-type characterization of sobolev vector
  fields}, Communications in Contemporary Mathematics, 14 (2012), p.~1250028.

\bibitem{mengesha2014bond}
{\sc T.~Mengesha and Q.~Du}, {\em The bond-based peridynamic system with
  dirichlet-type volume constraint}, Proceedings of the Royal Society of
  Edinburgh Section A: Mathematics, 144 (2014), pp.~161--186.

\bibitem{mengesha2015variational}
{\sc T.~Mengesha and Q.~Du}, {\em On the variational limit of a class of
  nonlocal functionals related to peridynamics}, Nonlinearity, 28 (2015),
  p.~3999.

\bibitem{metzler2000random}
{\sc R.~Metzler and J.~Klafter}, {\em The random walk's guide to anomalous
  diffusion: a fractional dynamics approach}, Physics Reports, 339 (2000),
  pp.~1--77.

\bibitem{parks2008peridynamics}
{\sc M.~L. Parks, P.~Seleson, S.~J. Plimpton, R.~B. Lehoucq, and S.~A.
  Silling}, {\em Peridynamics with lammps: A user guide}, Sandia National
  Laboratory Report, SAND2008-0135, Albuquerque, New Mexico,  (2008).

\bibitem{Ponce2004An}
{\sc A.~C. Ponce}, {\em An estimate in the spirit of poincar\'{e}'s
  inequality}, Journal of the European Mathematical Society, 6 (2004),
  pp.~1--15.

\bibitem{seleson2011role}
{\sc P.~Seleson and M.~Parks}, {\em On the role of the influence function in
  the peridynamic theory}, International Journal for Multiscale Computational
  Engineering, 9 (2011), pp.~689--706.

\bibitem{silling2000reformulation}
{\sc S.~A. Silling}, {\em Reformulation of elasticity theory for
  discontinuities and long-range forces}, Journal of the Mechanics and Physics
  of Solids, 48 (2000), pp.~175--209.

\bibitem{silling2005meshfree}
{\sc S.~A. Silling and E.~Askari}, {\em A meshfree method based on the
  peridynamic model of solid mechanics}, Computers \& Structures, 83 (2005),
  pp.~1526--1535.

\bibitem{TTD17}
{\sc Y.~Tao, X.~Tian, and Q.~Du}, {\em Nonlocal diffusion and peridynamic
  models with {N}eumann type constraints and their numerical approximations},
  Applied Mathematics and Computation, 305 (2017), pp.~282--298.

\bibitem{tian2017conservative}
{\sc H.~Tian, L.~Ju, and Q.~Du}, {\em A conservative nonlocal
  convection--diffusion model and asymptotically compatible finite difference
  discretization}, Computer Methods in Applied Mechanics and Engineering, 320
  (2017), pp.~46--67.

\bibitem{tian2013efficient}
{\sc H.~Tian, H.~Wang, and W.~Wang}, {\em An efficient collocation method for a
  non-local diffusion model}, International Journal of Numerical Analysis \&
  Modeling, 10 (2013), pp.~815--825.

\bibitem{tian2013analysis}
{\sc X.~Tian and Q.~Du}, {\em Analysis and comparison of different
  approximations to nonlocal diffusion and linear peridynamic equations}, SIAM
  Journal on Numerical Analysis, 51 (2013), pp.~3458--3482.

\bibitem{tian2014asymptotically}
{\sc X.~Tian and Q.~Du}, {\em Asymptotically compatible schemes and
  applications to robust discretization of nonlocal models}, SIAM Journal on
  Numerical Analysis, 52 (2014), pp.~1641--1665.

\bibitem{tian2020asymptotically}
{\sc X.~Tian and Q.~Du}, {\em Asymptotically compatible schemes for robust
  discretization of parametrized problems with applications to nonlocal
  models}, SIAM Review, 62 (2020), pp.~199--227.

\bibitem{trask2019asymptotically}
{\sc N.~Trask, H.~You, Y.~Yu, and M.~L. Parks}, {\em An asymptotically
  compatible meshfree quadrature rule for nonlocal problems with applications
  to peridynamics}, Computer Methods in Applied Mechanics and Engineering, 343
  (2019), pp.~151--165.

\bibitem{vollmann2019nonlocal}
{\sc C.~Vollmann}, {\em Nonlocal models with truncated interaction
  kernels-analysis, finite element methods and shape optimization}, PhD thesis,
  University of Trier, 2019.

\bibitem{wang2017fast}
{\sc C.~Wang and H.~Wang}, {\em A fast collocation method for a
  variable-coefficient nonlocal diffusion model}, Journal of Computational
  Physics, 330 (2017), pp.~114--126.

\bibitem{yanguniform}
{\sc J.~Z. Yang, X.~Yin, and J.~Zhang}, {\em On uniform second-order nonlocal
  approximations to diffusion and subdiffusion equations with nonlocal effect
  parameter}, Communications in Mathematical Sciences, 20 (2022), pp.~359--375.

\bibitem{you2020asymptotically}
{\sc H.~You, X.~Lu, N.~Trask, and Y.~Yu}, {\em An asymptotically compatible
  approach for neumann-type boundary condition on nonlocal problems}, ESAIM:
  Mathematical Modelling and Numerical Analysis, 54 (2020), pp.~1373--1413.

\bibitem{zhang2016quadrature}
{\sc X.~Zhang, M.~Gunzburger, and L.~Ju}, {\em Quadrature rules for finite
  element approximations of 1d nonlocal problems}, Journal of Computational
  Physics, 310 (2016), pp.~213--236.

\bibitem{zhang2018accurate}
{\sc X.~Zhang, J.~Wu, and L.~Ju}, {\em An accurate and asymptotically
  compatible collocation scheme for nonlocal diffusion problems}, Applied
  Numerical Mathematics, 133 (2018), pp.~52--68.

\bibitem{zheng2021new}
{\sc G.~Zheng, J.~Wang, G.~Shen, Y.~Xia, and W.~Li}, {\em A new quadrature
  algorithm consisting of volume and integral domain corrections for
  two-dimensional peridynamic models}, International Journal of Fracture, 229
  (2021), pp.~39--54.

\end{thebibliography}
\end{document}